\newtheorem{lemma}[equation]{Lemma}
\newtheorem{df}[equation]{Def\/inition}
\newtheorem{prop}[equation]{Proposition}
\newtheorem{cor}[equation]{Corollary}
\newcommand{\eqn}{\begin{equation}}
\newcommand{\eeqn}{\end{equation}}
\DeclareMathOperator{\ord}{ord}
\DeclareMathOperator{\Spec}{Spec}
\DeclareMathOperator{\NP}{NP}  
\begin{document}


\title{Overconvergent Witt Vectors}
\date{}
\author{Christopher Davis}
\address{Massachusetts Institute of Technology, Dept of Mathematics, Cambridge, MA 02139}
\email{davis@math.mit.edu}

\author{Andreas Langer}
\address{University of Exeter, Mathematics, Exeter EX4 4QF, Devon, UK}
\email{a.langer@exeter.ac.uk}

\author{Thomas Zink}
\address{Fakult\"at f\"ur Mathematik, Universit\"at Bielefeld, Postfach 100131, D-33501 Bielefeld}
\email{zink@math.uni-bielefeld.de}

\begin{abstract}
Let $A$  be a finitely generated algebra over a field $K$ of
characteristic $p >0$. We introduce a subring $W^{\dagger}(A) \subset
W(A)$, which we call the ring of overconvergent Witt vectors and prove
its basic properties.  In a subsequent paper we use the results to
define an overconvergent de Rham-Witt complex for smooth varieties
over $K$ whose hypercohomology is the rigid cohomology.
\end{abstract}

\maketitle

\numberwithin{equation}{section}
\numberwithin{thmsub}{equation}

\section*{Introduction}
 

Overconvergent Witt vectors were used by de Jong in his proof of Tate's
conjecture on homomorphisms of $p$-divisible groups \cite{J} and in
Kedlaya's work on the Crew conjecture. In \cite{DLZ} we define a de
Rham-Witt complex over the ring of overconvergent Witt vectors which
computes the rigid cohomology of smooth varieties of a perfect field
of characterisitc $p>0$. For this it is necessary to consider
overconvergent Witt vectors in a  more general setting.

Let $A$ be a finitely generated algebra of a field $K$ of
characteristic $p$. Let $W(A)$ be the ring of Witt vectors with
respect to $p$. 
We define a subring $W^{\dagger}(A)\subset W(A)$
which we call the ring 
of overconvergent Wittvectors. Let $A=K[T_1,\dots, T_d]$ be the
polynomial ring. We say that a Witt vector $(f_0,f_1,f_2, \ldots ) \in
W(A)$ is overconvergent, if there is a real number $\varepsilon >0$  and a
real number $C$ such that 
\begin{displaymath}
   m - \varepsilon p^{-m} \deg f_m \geq C, \quad \text{for all} \; m
   \geq 0.
\end{displaymath} 
The overconvergent Witt vectors form a subring $W^{\dagger}(A) \subset
W(A)$. 

There is a natural morphism
from the ring of restricted power series
    $$W(K)\{T_1,\dots, T_d\}\to W(A),$$
which maps $T_i$ to its Teichm\"uller representative $[T_i]$.

The inverse image of $W^{\dagger}(A)$ is the set of those power series which
converge in some neighborhood of the unit ball.
This is the weak completion $A^{\dagger}$ of $W(K)[T_1,\dots,T_d]$ in the
sense of Monsky and Washnitzer. We note that the bounded Witt vectors
used by Lubkin \cite{L} are different from the overconvergent Witt
vectors.  

If $A \rightarrow B$ is a surjection
of finitely generated $K$-algebras, we obtain by definition a
surjection of the rings of overconvergent Witt vectors 
\begin{displaymath}
   W^{\dagger}(A) \rightarrow W^{\dagger}(B).
\end{displaymath} 

We prove here basic properties of overconvergent Witt vectors which
we use in \cite{DLZ}:

Let $A \subset B$ be two smooth $K$-algebras. Then

\begin{displaymath}
   W^{\dagger}(A) = W(A) \cap W^{\dagger}(B)
\end{displaymath}
(see: Proposition \ref{LOK18p}).

\smallskip

Further we show (see Corollary \ref{endl-et}):
Let $A$ be a finitely generated algebra over  $K$. Let $B =
A[T]/(f(T))$ be a finite \'etale $A$-algebra, where $f(T) \in A[T]$ is
a monic polynomial of degree $n$, such that $f'(T)$ is a unit in $B$. 

We denote by $t$ the residue class of $T$ in $B$. 
Then $W^{\dagger} (B)$ is finite and \'etale over $W^{\dagger} (A)$,
and the elements $1, [t], [t]^2 \ldots, [t]^{n-1}$ form a basis of the
$W^{\dagger}(A)$-module $W^{\dagger}(B)$.

\smallskip
 
Finally we prove that $W^{\dagger}(A) \rightarrow A$ satisfies
Hensel's lemma (see Proposition \ref{Hensel}).


\section{Pseudovaluations}

We set $\bar{\mathbb{R}} = \mathbb{R} \cup \{\infty\} \cup
\{-\infty\}$ with its natural order.
\begin{df}
Let $A$ be an abelian group. An order function is a function
\begin{displaymath}
\nu : A \rightarrow \bar{\mathbb{R}},
\end{displaymath}
such that $\nu (0) = \infty$ and such that for arbitrary $a, b \in A$: 
\begin{displaymath}
\nu (a \pm b) \geq \min \{ \nu (a),\nu (b)\}.
\end{displaymath}
\end{df}
An order function is the same thing as a decending filtration 
of $A$ by subgroups $F^rA$ indexed by $r \in \bar{\mathbb{R}}$, with
the property $\cap_{s <r} F^sA = F^rA$. 
 
In particular the inequality above is an equality if $\nu (a) \neq \nu
(b)$. Moreover we have $\nu (a) = \nu (-a)$. 

Let $\phi: A \rightarrow B$ be a surjective homomorphism of abelian
groups. Then we 
define the quotient $\bar{\nu} : B \rightarrow \bar{\mathbb{R}}$ by:
\begin{equation}\label{A19e}
\bar{\nu} (b) = \sup \{\nu(a) \;|\; a \in A, \; \phi (a) = b\}.
\end{equation}
This is again an order function.

We define an order function $\nu^n$ on the direct sum $A^n$ as follows:

\begin{equation}\label{A11e}
   \nu^n ((a_1, \ldots, a_n)) = \min_{i} \{ \nu(a_i) \}.
\end{equation}

\begin{df}\label{A11d}
Let $A$ be a ring with $1$. A pseudovaluation $\nu$ on $A$ is an order
function on the additive group
\begin{displaymath}
\nu : A \rightarrow \bar{\mathbb{R}} 
\end{displaymath} 
such that the following properties hold
\begin{enumerate}
\item[1)] $\nu(1) = 0$,
\item[2)] $\nu (ab) \geq \nu(a) + \nu(b)$, if $\nu (a) \neq -\infty$
  and $ \nu (b) \neq -\infty$. 
\end{enumerate}
\end{df}
We call $\nu$ proper if it doesn't take the value $-\infty$. 
We call $\nu$ negative if $\nu$ is proper and $\nu (a) \leq 0$ for all
$a \in A, a \neq 0$. If $\nu$ is proper and $2)$ is an equality, $\nu$
is called a valuation. On each ring $A$ we have the trivial
valuation: $\nu (a) = 0$ for $a \neq 0$.

If $\phi : A \rightarrow B $ is a surjective ring homomorphism.
Let $\nu$ be a pseudovaluation on $A$. Let $\bar{\nu}$ the induced
order function on $B$. If $\bar{\nu}(1) \neq \infty$ then $\bar{\nu}$
is a pseudovaluation. In particular this is the case if $\nu$ is
negative. 

{\bf Example 1:} Let $R$ be a ring with a negative pseudovaluation
$\mu$. Consider the polynomial ring $A = R[T_1, \ldots T_m]$. 
Let $d_1 >0, \ldots, d_m >0$ be real numbers. Then we define a
valuation 
on $A$ as follows: For a polynomial

\begin{displaymath}
f = \sum_{k} c_k T_1^{k_1}\cdot \ldots \cdot T_m^{k_m}   
\end{displaymath}
we set 

\begin{equation}\label{A9e}
   \nu(f) = \inf \{\mu(c_k) - k_1 d_1 - \ldots - k_m d_m \}.
\end{equation}
This is a valuation if $\mu$ is a valuation.
We often consider the case where $R$ is an integral domain and $\mu$
is the trivial valuation. If moreover $d_i = 1$ we call $\nu$ the
standard degree valuation.

We are interested in pseudovaluations up to equivalence:
\begin{df}
  Let $\nu_1, \nu_2 : A \rightarrow \mathbb{R} \cup \infty$ be two
  functions such that $\nu_{i} \neq 0$ for all $a \in A$. We say that
  they are linearly equivalent, if there are real numbers $c_1 > 0,
  c_2 > 0, d_1 \geq 0, d_2 \geq 0$ such that for all $a \in A$:
\begin{displaymath}
\begin{array}{ccc}
\nu_1 (a) & \geq & c_2\nu_2 (a) - d_2\\
\nu_2 (a) & \geq & c_1\nu_1 (a) - d_1
\end{array}
\end{displaymath}
\end{df}

In Example 1 (\ref{A9e}) we obtain for different choices of the
numbers $d_i$ linearly equivalent negative pseudovaluations.  The
equivalence class of $\nu$ doesn't change if we replace $\mu$ by an
equivalent negative pseudovaluation.

Let $\nu_1$ and $\nu_2$ be two negative pseudovaluations on $A$.  If
$A \rightarrow B$ is a surjective ring homomorphism, then the
quotients $\bar{\nu}_1$ and $\bar{\nu_2}$ are again linearly
equivalent.

\begin{prop}\label{A1p}
Let $\mu$ be a negative pseudovaluation on a ring $R$. We consider a
surjective 
ring homomorphism $\phi: R[T_1, \ldots, T_m] \rightarrow R[S_1,
\ldots, S_n]$. Let $\nu_T$ be a pseudovaluation on
$R[T_1, \ldots, T_m]$ and let $\nu_S$ be a pseudovaluation on 
$R[S_1, \ldots, S_n]$ as defined by (\ref{A9e}). Then the quotient of
$\nu_T$ with respect to $\phi$ is a pseudovaluation which is linearly
equivalent to the valuation $\nu_S$.  
\end{prop}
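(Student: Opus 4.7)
The plan is to verify that $\bar{\nu}_T$ is a pseudovaluation and then establish linear equivalence with $\nu_S$ by proving two inequalities; both will in fact hold with zero additive constants, because $\mu$ is negative and so both $\nu_T,\nu_S$ take nonpositive values on all nonzero elements.

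First, since $\nu_T$ is constructed from the negative $\mu$ by formula~(\ref{A9e}), it is itself negative, so $\nu_T(f)\leq 0$ for all nonzero $f$ and $\nu_T(1)=\mu(1)=0$. This forces $\bar{\nu}_T(1)=0$, and the discussion preceding Example~1 then gives that $\bar{\nu}_T$ is a pseudovaluation; the multiplicative inequality $\bar{\nu}_T(ab)\geq\bar{\nu}_T(a)+\bar{\nu}_T(b)$ also follows from $\phi(fg)=\phi(f)\phi(g)$ together with the corresponding property of $\nu_T$.

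For the direction $\nu_S(g)\geq D\,\bar{\nu}_T(g)$, write $e_i>0$ and $d_j>0$ for the weights defining $\nu_T$ and $\nu_S$, and set $a_i:=\phi(T_i)$ and $D_i:=-\nu_S(a_i)\geq 0$. For any $f=\sum_k c_k T^k$ with $\phi(f)=g$, the pseudovaluation properties of $\nu_S$ yield
\[
  \nu_S(g)\;\geq\;\min_k\Bigl(\mu(c_k)-\sum_i k_i D_i\Bigr).
\]
Choosing $D:=\max(1,\max_i D_i/e_i)$ and using that $\mu(c_k)\leq 0$ together with $D\geq 1$ (so $\mu(c_k)\geq D\mu(c_k)$), one obtains $\mu(c_k)-\sum_i k_i D_i \geq D(\mu(c_k)-\sum_i k_i e_i)\geq D\,\nu_T(f)$. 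Taking the supremum over preimages $f$ yields $\nu_S(g)\geq D\,\bar{\nu}_T(g)$.

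The reverse direction $\bar{\nu}_T(g)\geq E\,\nu_S(g)$ requires an explicit lift. Using surjectivity of $\phi$, fix once and for all lifts $b_j\in R[T_1,\ldots,T_m]$ of $S_j$, and set $F_j:=-\nu_T(b_j)\geq 0$. For $g=\sum_k c_k S^k$, take
\[
  f\;:=\;\sum_k c_k\, b_1^{k_1}\cdots b_n^{k_n},
\]
which satisfies $\phi(f)=g$. An identical estimate with $E:=\max(1,\max_j F_j/d_j)$ then gives $\nu_T(f)\geq E\,\nu_S(g)$, whence $\bar{\nu}_T(g)\geq E\,\nu_S(g)$. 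The one subtle point, which I expect to be the main obstacle, is arranging the weighted-sum comparison $\sum k_i D_i$ versus $\sum k_i e_i$ uniformly in the multi-index $k$; the trick of choosing $D,E\geq 1$ and exploiting $\mu(c_k)\leq 0$ absorbs the constant-term piece into the estimate cleanly.
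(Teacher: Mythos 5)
Your argument is correct, and since the paper explicitly omits the proof (deferring to \cite{MW} and \cite{DLZ}), what you have supplied is a self-contained substitute rather than a variant of the authors' own reasoning. The two estimates $\nu_S\geq D\,\bar\nu_T$ and $\bar\nu_T\geq E\,\nu_S$ are established cleanly: one direction pushes each monomial $c_kT^k$ through $\phi$ and uses submultiplicativity of $\nu_S$ applied to $\phi(T_i)$; the other picks a fixed polynomial lift $b_j$ of each $S_j$ and constructs an explicit preimage of $g$, so that the quotient supremum is bounded below by a single computable term. The key numerical device, choosing $D,E\geq 1$ so that $\mu(c_k)\leq 0$ gives $\mu(c_k)\geq D\mu(c_k)$, absorbs the coefficient contribution uniformly over $k$, which is exactly what is needed to obtain linear equivalence with zero additive constants.

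Two points are worth recording. First, the inequality $\mu(c_k)-\sum_i k_iD_i\geq D(\mu(c_k)-\sum_i k_ie_i)$ is being applied termwise, and then the minima on both sides are compared; it is clean as written, but a reader should note that this step uses $D>0$ so that multiplication by $D$ preserves order under $\min$. Second, you implicitly assume $\phi(T_i)\neq 0$ when writing $D_i:=-\nu_S(\phi(T_i))\geq 0$; if some $\phi(T_i)=0$, then $\nu_S(\phi(T_i))=\infty$, and the corresponding monomials vanish in $g$, so one should simply set $D_i=0$ in that case (the term contributes $\infty$ to the $\min$ and is harmless). The analogous issue does not arise for the $F_j$, since each $b_j$ lifts the nonzero element $S_j$ and hence is nonzero. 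With this small clarification the argument is complete.
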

We omit the straightforward proof which is essentially contained in
\cite{MW}. A proof in a more general context is given in \cite{DLZ}. 

\begin{df}\label{A1d}
  Let $\mu$ be a negative pseudovaluation on a ring $R$.  Let $B$ be a
  finitely generated $R$-algebra. Choose an arbitrary surjection
  $R[T_1, \ldots, T_m] \rightarrow B$ and an arbitrary degree
  valuation $\nu$ on $R[T_1, \ldots T_m] $. Then the quotient
  $\bar{\nu}$ on $B$, is up to linear equivalence independent of these
  choices. We call any negative pseudovaluation in this equivalence
  class admissible.
\end{df}
Let $\mu$ be an admissible pseudovaluation on a finitely generated
$R$-algebra $B$. Let $\nu$ be the pseudovaluation on a polynomial
algebra $B[T_1, \ldots, T_m]$ given by Example 1. Then $\nu$ is
admissible. This it is easily seen, if we write $B$ as a quotient of a
polynomial algebra.

\begin{lemma}\label{A3l}
Let $(R, \mu)$ be a ring with a negative pseudovaluation. Let $A$ be
an $R$-algebra which is finite and free as an $R$-module. Let $\tau$
be an admissible pseudovaluation on $A$.

Choose an $R$-module isomorphism $R^n \cong A$. With respect to this
isomorphism $\tau$ is linearly equivalent to the order function
$\mu^n$ given by (\ref{A11e}). 
\end{lemma}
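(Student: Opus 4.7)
The plan is to produce an explicit admissible pseudovaluation on $A$ that is manifestly linearly equivalent to the order function $\mu^n$. Since any two admissible pseudovaluations on $A$ are linearly equivalent by Definition~\ref{A1d}, transitivity of linear equivalence will then yield that $\tau$ is linearly equivalent to $\mu^n$.

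Fix the basis $e_1, \ldots, e_n$ of $A$ coming from the isomorphism $R^n \cong A$, and consider the surjection of $R$-algebras $\phi : R[T_1, \ldots, T_n] \to A$ sending $T_i \mapsto e_i$. For any $C > 0$, write $\nu_C$ for the degree valuation on $R[T_1, \ldots, T_n]$ from Example~1 with common weight $d_i = C$, and let $\bar\nu_C$ denote its quotient along $\phi$; by Definition~\ref{A1d} this $\bar\nu_C$ is admissible. One of the two required inequalities is then immediate: for $a = \sum_i r_i e_i$ the linear lift $p := \sum_i r_i T_i$ satisfies $\nu_C(p) = \min_i \mu(r_i) - C = \mu^n(a) - C$, whence $\bar\nu_C(a) \geq \mu^n(a) - C$.

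The reverse inequality $\mu^n(a) \geq \bar\nu_C(a) - D$ is the main step. Writing $\phi(T_j) = \sum_i b_{ji} e_i$ and letting $\gamma_{ii'}^{l}$ denote the structure constants $e_i e_{i'} = \sum_l \gamma_{ii'}^{l} e_l$, expand $\phi(T^k) = \prod_j \phi(T_j)^{k_j} = \sum_i \beta_{k,i} e_i$. Each $\beta_{k,i}$ is a sum of monomials, each of which consists of $|k|$ factors drawn from the $b_{ji}$ and $|k| - 1$ factors drawn from the $\gamma_{ii'}^{l}$; applying property~2 of the pseudovaluation $\mu$ to each such monomial and using that these finitely many elements have $\mu$-values bounded below, one obtains a constant $C_0 \geq 0$ with $\mu(\beta_{k,i}) \geq -C_0 |k|$ for $|k| \geq 1$, while the case $k = 0$ (which amounts to expanding $1_A$ in the basis) is absorbed into a constant $D \geq 0$. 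Setting $C := C_0$, any lift $p = \sum_k c_k T^k$ of $a = \sum_i r_i e_i$ satisfies $r_i = \sum_k c_k \beta_{k,i}$, and therefore
\begin{displaymath}
\mu(r_i) \;\geq\; \min_k\bigl(\mu(c_k) + \mu(\beta_{k,i})\bigr) \;\geq\; \min_k\bigl(\mu(c_k) - C|k|\bigr) - D \;=\; \nu_C(p) - D.
\end{displaymath}
Minimizing over $i$ and taking the supremum over all lifts $p$ gives $\mu^n(a) \geq \bar\nu_C(a) - D$, which combined with the first inequality establishes linear equivalence of $\bar\nu_C$ and $\mu^n$.

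The principal obstacle is the coefficient bound $\mu(\beta_{k,i}) \geq -C_0 |k| - D$: one must verify that although $\phi(T^k)$ is obtained by performing on the order of $|k|$ multiplications in $A$, the accumulated loss of $\mu$ grows only linearly in $|k|$, so that a single choice of weight $C$ in the degree valuation suffices to absorb it uniformly in $k$ and $i$.
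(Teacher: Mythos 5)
Your proposal is correct and takes essentially the same approach as the paper's proof: both hinge on the observation that one multiplication by structure constants of $A$ costs at most a bounded amount of $\mu$-value, which is absorbed by choosing the weight $d$ (your $C_0$) in the degree valuation large enough. The only difference is bookkeeping — the paper performs a degree-reduction on the polynomial lift $f$, showing $\tilde\tau$ does not decrease, and then compares linear lifts, whereas you directly bound $\mu(\beta_{k,i})$ for the basis coefficients of $\phi(T^k)$; the underlying estimate is the same.
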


\begin{proof}
Let $e_1, \ldots, e_n$ be a basis of $A$ as an
$R$-module. Consider the natural surjection:
\begin{displaymath}
   \alpha: R[T_1, \ldots, T_n] \rightarrow A
\end{displaymath}
such that $\alpha (T_i) = e_i$. We have equations:

\begin{displaymath}
   e_ie_j = \sum_{l=1}^{n} c_{ij}^{(l)} e_l, \quad c_{ij}^{(l)} \in R.
\end{displaymath}
We choose a number $d$, such that for all coefficients $c_{ij}^{(l)}$:

\begin{displaymath}
   \mu (c_{ij}^{(l)}) + d \geq 0.
\end{displaymath}
Let $\tilde{\tau}$ be the pseudovaluation (\ref{A9e}) on $R[T_1,
\ldots, T_n]$, such that $\tilde{\tau}(X_i) = -d$. We can take for
$\tau$ the quotient of $\tilde{\tau}$.

Consider an element $a \in A$. We choose a representative of $a$:

\begin{displaymath}
   f = \sum_{k} r_k T_1^{k_1}\cdot \ldots \cdot T_n^{k_n}.
\end{displaymath}
We claim that there is a linear polynomial $f_1$ which maps to $a$,
such that 
\begin{displaymath}
   \tilde{\tau}(f_1) \geq \tilde{\tau}(f).
\end{displaymath}
Indeed, assume that some of the monomials $r_kT_1^{k_1}\cdot \ldots
\cdot T_n^{k_n}$ is divisible by $T_iT_j$. We will pretend in our
notation that $i \neq j$, but the other case is the same. We find an 
equation
\begin{displaymath}
   r_k T^k = \sum_l r_kc_{ij}^{(l)}T^{k(l)},
\end{displaymath} 
where $|k(l)| = |k| -1$. We find for any fixed $l$:
\begin{displaymath}
   \tilde{\tau}(r_kc_{ij}^{(l)}T^{k(l)}) = \mu (r_kc_{ij}^{(l)}) -
   d(|k| -1) \geq \mu (a_k) + \mu(c_{ij}^{(l)}) + d - d|k| \geq \tilde{\tau}(f).
\end{displaymath}

We conclude that
\begin{equation}\label{A10e}
   \tau (a) = \sup \{\tilde{\tau}(f) \;|\; f = r_0 + r_1T_1 + \ldots +
   r_nT_n, \; \alpha(f) = b  \}.
\end{equation}

By construction $a$ has a unique representative
\begin{displaymath}
   g = \sum_{i=1}^n  s_i T_i.
\end{displaymath}
Clearly $\tilde{\tau}$ restricted to linear forms as above is linearly
equivalent to the order function $\mu^n$ defined by (\ref{A11e}). We
need to compare $\tilde{\tau}(g)$ and $\tau(a)$. 

We have a relation in $A$: 
\begin{displaymath}
   1 = \sum_{i=1}^n c_i e_i, \quad c_i \in R.
\end{displaymath}
Given a representative $f$ as in (\ref{A10e}) we find:
\begin{displaymath}
   g = \sum_{i=1}^n (r_i + c_i r_0)T_i.
\end{displaymath}
Then we find:
\begin{displaymath}
\begin{array}{ll}
\tilde{\tau}(g) & =  \min \{\mu(r_i + c_ir_0) - d\} \geq 
\min_{i} \{\min \{\mu(r_i) - d, \mu(c_i) - d +\mu(r_0)  \} \}\\ 
 & \geq  \tilde{\tau}(f) - d',
\end{array} 
\end{displaymath}
where $d'$ is chosen such that $-d' < \mu(c_i) - d$. Since this is
true for arbitrary $f$ we find $\tilde{\tau}(g) \geq \tau(b) - d'$.
Since $\tau $ is the quotient norm we have the obvious inequality 
\begin{displaymath}
   \tau (b) \geq \tilde{\tau}(g).
\end{displaymath} 
This completes the proof. 
\end{proof}
 
\smallskip
 
{\bf Example 2:} Let $\nu$ be a negative pseudovaluation on $A$. Let
$d >0$ a real number.  Then we have defined a pseudovaluation on the
polynomial algebra $A[X]$:
\begin{equation}\label{A2e}
\mu (\sum a_i X^i) = \min \{ \nu (a_i) - id \}.
\end{equation}

Let  $f \in A$, such that $f$ is not nilpotent. 
Then we define a pseudovaluation $\nu'$  on the localization $A_f$ by
taking the quotient under the map: 
\begin{displaymath}
A[X] \rightarrow A_f,
\end{displaymath} 
which sends $X$ to $f^{-1}$. 
As we remarked above $\nu'$ depends only on the linear equivalence
class of $\nu$ on $A$.

Let $z \in A_f$. Consider all possible representations of $z$ in the form:
\begin{equation}\label{A4e}
z = \sum_l a_l/f^l. 
\end{equation}
Then $\nu'$ is the supremum over all these representations of the
following numbers: 
\begin{equation}\label{A5e}
\min_l \{ \nu(a_l) - ld \}.
\end{equation} 
If the supremum is assumed we call the representation  optimal. 

\begin{lemma}\label{LOK7l}
 Let $(A,\nu)$ be a ring with a negative pseudovaluation. Let  $f \in
A$ be a non-zero divisor. Let $\nu'$ be the induced pseudovaluation 
on $A_f$ which is associated to a fixed number $d > 0$. 

We are going to define a function $\tau : A_f \rightarrow \mathbb{R}
\cup \{\infty \}$. For $z \in A_f$ we consider the set of all possible
representations
\begin{equation}\label{LOK1e}
   z = a/(f^m).
\end{equation}  
 We define $\tau (z)$ to be the maximum of the numbers $\nu (a) - md$
for all possible representations (\ref{LOK1e}).

Then there is a real constant $Q > 0$ such that for all $z \in A_f$
\begin{displaymath}
   \nu' (z) \geq \tau (z) \geq Q \nu' (z).
\end{displaymath}  
\end{lemma}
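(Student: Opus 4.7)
The inequality $\nu'(z) \geq \tau(z)$ is immediate from the definitions. Given any single-fraction representation $z = a/f^m$, the monomial $aX^m \in A[X]$ maps to $z$ under $X \mapsto f^{-1}$ and has pseudovaluation $\nu(a) - md$. Since $\nu'(z)$ is the supremum over \emph{all} polynomial preimages of $z$, it dominates every $\nu(a) - md$; taking the supremum over $(a,m)$ yields $\nu'(z) \geq \tau(z)$.

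For the reverse estimate $\tau(z) \geq Q \nu'(z)$, the plan is to convert an arbitrary polynomial preimage of $z$ into a single-fraction representation and to control the loss of pseudovaluation. Given $g(X) = \sum_{l=0}^N a_l X^l \in A[X]$ with $g(f^{-1}) = z$ and $\mu(g) = \min_l\{\nu(a_l) - ld\}$, the natural choice is to form $a := \sum_{l=0}^N a_l f^{N-l}$, so that $z = a/f^N$ in $A_f$ (here the non-zero-divisor hypothesis on $f$ ensures no ambiguity). Writing $e := -\nu(f)$, which is finite and nonnegative because $\nu$ is proper and negative, and applying property~2) of the pseudovaluation, one obtains
\begin{displaymath}
\nu(a) - Nd \;\geq\; \min_l\{\nu(a_l) - (N-l)e\} - Nd \;\geq\; \mu(g) - N(d+e).
\end{displaymath}

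The crucial step is to absorb the stray $N(d+e)$: because $\nu$ is negative, $\nu(a_N) \leq 0$ (after trimming leading zeros we may assume $a_N \neq 0$), hence $\mu(g) \leq \nu(a_N) - Nd \leq -Nd$, i.e., $N \leq -\mu(g)/d$. Substituting yields
\begin{displaymath}
\tau(z) \;\geq\; \nu(a) - Nd \;\geq\; \mu(g) \cdot \frac{2d+e}{d}.
\end{displaymath}
Since this holds for every polynomial preimage $g$ of $z$, taking the supremum on the right-hand side produces $\tau(z) \geq Q\nu'(z)$ with $Q := (2d+e)/d$, a positive constant depending only on $f$ and $d$.

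The main obstacle, and the reason a linear (not merely affine) bound is available, is precisely the sign bookkeeping in the last step. Both $\nu'(z)$ and $\tau(z)$ are non-positive for $z \neq 0$, so multiplying $\mu(g) \leq 0$ by $(2d+e)/d \geq 2$ makes the right-hand side \emph{more} negative, and the bound $N \leq -\mu(g)/d$ is what converts the additive loss $-N(d+e)$ into a multiplicative factor in front of $\mu(g)$.
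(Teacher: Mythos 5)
Your proof is correct and follows essentially the same route as the paper's: convert an arbitrary representation into a single-fraction one by clearing denominators, estimate the loss using subadditivity of $\nu$, and then absorb the linear-in-$N$ loss by bounding $N$ through the observation that $\mu(g) \leq -Nd$ (the paper phrases this as $-C \leq -md$). The resulting constant $Q = (2d+e)/d = 2 + \frac{-\nu(f)}{d}$ is identical to the one in the paper.
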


\begin{proof}
The first of the asserted inequalities is
trivial. Consider any representation:
\begin{displaymath}
   z = \sum_{l=0}^{m} u_l/(f^l) \quad \text{such that} \; u_m \neq 0. 
\end{displaymath}
We set 
\begin{displaymath}
   -C = \min_{l} \{\nu(u_l) - ld \}.
\end{displaymath}
We note that this implies that $-C \leq -md$. 
We find a representation of the form (\ref{LOK1e}):
\begin{displaymath}
   z = (\sum_{l=0}^{m} u_l f^{m-l})/(f^m) = a/f^m. 
\end{displaymath}
We find:
\begin{displaymath}
\begin{array}{lcl}
\nu(a) - md & = & \nu(\sum_{l=0}^{m} u_l f^{m-l}) -md \geq 
\min_{l} \{\nu(u_l) + (m-l)\nu(f) -md\}\\
& \geq & \min_{l} \{\nu(u_l) - ld - md  + m\nu(f) + l(d - \nu(f)) \}\\
& \geq & \min_{l} \{-C - C  + m\nu(f) \}.
\end{array}
\end{displaymath}
We have further:
\begin{displaymath}
   m\nu(f) = (-dm) \frac{\nu(f)}{-d} \geq (-C)\frac{\nu(f)}{-d}. 
\end{displaymath}
Together we obtain:
\begin{displaymath}
   \nu (a) - md \geq -C(2 + \frac{\nu(f)}{-d}).
\end{displaymath}
This implies:
\begin{displaymath}
   \tau(a/f^m) \geq (2 + \frac{\nu(f)}{-d})\nu'(z).
\end{displaymath}
\end{proof}

The motivation for the following definition is Lemma \ref{LOK5l} below.

\begin{df}\label{LOK2d}
Let $(A,\nu)$ be a ring with a negative pseudovaluation. 
We say that a non-zero divisor $f \in A$ is localizing with respect to
$\nu$, if there are real numbers $C > 0$ and $D \geq 0$, such that for all 
natural numbers $n$:
\begin{equation}\label{LOK2e}
   \nu(f^n x) \leq C\nu(x) + nD,\quad \text{for all}\; x \in A.
\end{equation}
\end{df}
If $\mu$ is a negative pseudovaluation on $A$, which is linearly
equivalent to $\nu$ then $f$ is localizing with respect to $\nu$, iff
it is  localizing with respect to $\mu$. Indeed any function $\rho$ linearly 
equivalent satisfies an inequality (\ref{LOK2e}).
 
It is helpful to remark that making $C$ smaller we may always arrange
that $D$ is smaller than any given positive number. It is also easy to
see that a unit of the ring $A$ is always localizing. 

Let $A = R[T_1, \ldots, T_d]$ be a polynomial ring over an integral domain
$R$ with a degree valuation $\nu$. Then we have the equation:
\begin{displaymath}
    \nu(f^n x) = \nu(x) + n \nu(f) ,\quad x \in A.
\end{displaymath}
Therefore (\ref{LOK2e}) holds with $C = 1$ and $D =0$.

More generally, let $(B,\mu)$ a ring with a negative pseudovaluation.
We endow $A = B[T]$ with the natural extension $\nu$ of $\mu$ such
that $\nu (T) = -d$. Assume that $f = T^m + a_{m-1}T^{m-1} + \ldots +
a_0$ is a monic polynomial with $a_i \in B$. 
\begin{df}\label{LOK6d}
We say that $f$ is
regular with respect to $T$, if
\begin{equation}\label{LOK6e}
\min_{0 \leq i < m} \{\mu(a_i) -id \} > -md.
\end{equation} 
\end{df}
For a regular polynomial we have $\nu (f) = -md$.
We remark that each monic polynomial $f$ becomes regular for a
suitable choice of $d$.  

\begin{prop}\label{A9p}
Let $f(T) \in B[T]$ be a regular polynomial (\ref{LOK6d}). Then we
have for an arbitrary polynomial $g(T) \in B[T]$ that
\begin{displaymath}
   \nu (f(T) g(T)) = \nu (f(T)) + \nu (g(T)).
\end{displaymath}
In particular any monic polynomial in $B[T]$ is localizing.
\end{prop}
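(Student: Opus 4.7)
The plan is to reduce the whole statement to a coefficient-by-coefficient calculation: first prove the multiplicativity $\nu(fg)=\nu(f)+\nu(g)$ directly for a regular monic $f$, then read off the localizing property with $C=1$ and $D=-\nu(f)$, and finally pass from the regular case to arbitrary monic $f$ via linear equivalence of pseudovaluations.

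For the multiplicativity, the inequality $\nu(fg)\geq \nu(f)+\nu(g)$ is immediate from property 2) of Definition \ref{A11d}. For the reverse inequality, write $f(T)=T^m+\sum_{i<m}a_iT^i$ and $g(T)=\sum_j b_jT^j$ (assume $g\neq 0$), and let $j^*$ be the largest index with $\mu(b_{j^*})-j^*d=\nu(g)$; such $j^*$ exists since $g$ is a finite polynomial and the minimum is attained. I would then inspect the coefficient of $T^{m+j^*}$ in $fg$,
\begin{displaymath}
c_{m+j^*} \;=\; b_{j^*} \;+\; \sum_{\substack{i<m,\ j'>j^*\\ i+j'=m+j^*}} a_i\,b_{j'},
\end{displaymath}
and use regularity of $f$ in the form $\mu(a_i)>(i-m)d$ for $i<m$, together with the strict inequality $\mu(b_{j'})>\nu(g)+j'd$ for every $j'>j^*$ (from the maximality of $j^*$, including the trivial case $b_{j'}=0$), to deduce that each cross-term satisfies $\mu(a_ib_{j'})>\nu(g)+j^*d$. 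Since the distinguished term $b_{j^*}$ has $\mu$-value exactly $\nu(g)+j^*d$, the ultrametric inequality yields $\mu(c_{m+j^*})=\nu(g)+j^*d$. The contribution of this coefficient to $\nu(fg)$ is therefore $\nu(g)+j^*d-(m+j^*)d = \nu(g)-md = \nu(g)+\nu(f)$, which supplies the required upper bound.

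For the second assertion, iterating the multiplicativity identity gives $\nu(f^nx)=n\nu(f)+\nu(x)$ for every regular monic $f$ and every $x\in B[T]$, so (\ref{LOK2e}) holds with $C=1$ and $D=-\nu(f)=md$, proving that a regular monic polynomial is localizing. For an arbitrary monic polynomial $f$, the remark following Definition \ref{LOK6d} lets us enlarge $d$ to some $d'$ for which $f$ becomes regular. The pseudovaluation $\nu'$ obtained on $B[T]$ with parameter $d'$ is linearly equivalent to the original $\nu$ (as noted in Example 1), and, as observed after Definition \ref{LOK2d}, the localizing property is invariant under linear equivalence of pseudovaluations. Hence $f$ is localizing with respect to $\nu$.

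The only delicate step is the coefficient identification for $c_{m+j^*}$: one must use both the regularity of $f$ and the specific maximality of $j^*$ to rule out cancellation, forcing the single term $b_{j^*}$ to dominate. Everything else is a bookkeeping of inequalities.
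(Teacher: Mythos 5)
Your proof is correct and is essentially the paper's own argument: both isolate the coefficient of $T^{m+k_0}$ (your $T^{m+j^*}$) where $k_0$ is the largest index realizing $\nu(g)$, use regularity of $f$ together with the maximality of $k_0$ to show every cross-term has strictly larger $\mu$-value than $b_{k_0}$, and conclude by the ultrametric property that this coefficient alone forces $\nu(fg)\le\nu(f)+\nu(g)$. The closing reduction of arbitrary monic polynomials to regular ones via a change of $d$ and linear equivalence is also exactly the paper's route (with the harmless cosmetic difference that you take $D=md$ where $D=0$ already suffices).
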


\begin{proof} 
We write 
\begin{displaymath}
   g = \sum_{k=0}^{n} b_k T^k.
\end{displaymath}
Let $k_0$ be the largest index such that $\nu(g) = \nu(b_{k_0}) - k_0
d$. $fg$ contains the monomial 
\begin{displaymath}
   (b_{k_0} + b_{k_0+1}a_{m-1} + \ldots )T^{m+k_{0}}.
\end{displaymath}
We find by (\ref{LOK6e}) that
\begin{displaymath}
   \mu (b_{k_0+i}a_{m-i}) \geq \mu (b_{k_0}) + \mu (a_{m-i}) \geq \mu
   (b_{k_0}) -id.  
\end{displaymath}
On the other hand we have by the choice of $k_{0}$ that 
\begin{displaymath}
   \mu (b_{k_0}) - k_0 d < \mu (b_{k_0 + i}) - (i+k_0)d.
\end{displaymath}
This proves that  $\mu (b_{k_0+i}a_{m-i}) > \mu (b_{k_0}).$ Therefore
we obtain that
\begin{displaymath}
   \mu (b_{k_0} + b_{k_0+1}a_{m-1} + \ldots ) = \mu (b_{k_0}).
\end{displaymath} 
This shows the inequality:
\begin{displaymath}
   \nu (fg) \leq \nu(b_{k_0}) - d(m+k_0) = \nu(g) -md = \nu (g) + \nu
   (f). 
\end{displaymath}
The opposite inequality is obvious.  The last assertion follows
because any monic polynomial is regular for a suitable chosen $d$. 
\end{proof}

\begin{prop}
Assume that $f = T^m + a_{m-1}T^{m-1} + \ldots + a_0 \in B[T]$ is a 
polynomial which is regular with respect to $T$. 
Each $z \in A_f$ has a unique representation:
\begin{equation}\label{A6e}
z = \sum_l u_l/f^l, \qquad u_l \in B[T],
\end{equation}
where $u_l$ is for $l > 0$ a polynomial of degree strictly less than $m = \deg f$.
Then the representation (\ref{A6e}) is optimal (compare (\ref{A5e})).
\end{prop}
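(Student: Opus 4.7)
The plan is to establish existence and uniqueness of the canonical representation by iterated polynomial division by $f$, and then to prove optimality by transforming any representation into the canonical one via steps that do not decrease $\min_l\{\nu(a_l) - ld\}$.

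For existence, given $z = a/f^n$ with $a \in B[T]$, divide $a = q_1 f + r_1$ with $\deg r_1 < m$ so that $z = q_1/f^{n-1} + r_1/f^n$, and iterate on $q_1$; in finitely many steps one obtains a sum of the required shape. For uniqueness, a relation $\sum_{l=0}^{N} u_l/f^l = 0$ with $\deg u_l < m$ for $l \geq 1$ gives, after clearing denominators, $u_N = -f \cdot \sum_{l < N} u_l f^{N-l-1}$; since $\deg u_N < m = \deg f$, the unique quotient-remainder decomposition forces $u_N = 0$, and descending induction on $N$ completes the argument.

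For optimality, I would take any representation $z = \sum_{l=0}^{N} a_l/f^l$, set $-M := \min_l\{\nu(a_l) - ld\}$, and perform a reduction at the top position: write $a_N = qf + r$ with $\deg r < m$, put $u_N := r$, and replace $a_{N-1}$ by $a_{N-1} + q$. The resulting representation has minimum $\geq -M$ as soon as the two inequalities $\nu(r) \geq \nu(a_N)$ and $\nu(q) \geq \nu(a_N) + md$ hold; the second follows from $qf = a_N - r$ combined with Proposition \ref{A9p} (which gives $\nu(qf) = \nu(q) - md$, since $\nu(f) = -md$ for regular $f$). Iterating from $l = N$ downwards builds the canonical representation with minimum $\geq -M$; combined with the trivial bound $\nu'(z) \geq \min_l\{\nu(u_l) - ld\}$ for the canonical form, this yields equality and hence optimality.

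The main obstacle will be the inequality $\nu(r) \geq \nu(a_N)$, since Proposition \ref{A9p} controls products but not remainders. I would argue at the coefficient level via a single atomic step of long division: to kill a leading term $cT^n$ with $n \geq m$ one subtracts $cT^{n-m} f$, which modifies the coefficient at $T^{n-i}$ by $-c a_{m-i}$ for $1 \leq i \leq m$. By the regularity condition (\ref{LOK6e}), $\mu(a_{m-i}) > -id$, so $\mu(c a_{m-i}) - (n-i)d > \mu(c) - nd \geq \nu(a_N)$; hence each atomic step leaves every coefficient contribution $\mu(\cdot) - kd$ at least as large as $\nu(a_N)$, and induction on the degree of $a_N$ yields $\nu(r) \geq \nu(a_N)$.
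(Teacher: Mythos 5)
Your proof is correct and follows essentially the same route as the paper: reduce an arbitrary representation $z=\sum_l v_l/f^l$ to the canonical one by pushing high-degree content from position $l$ to position $l-1$, and use regularity of $f$ to show that the quantity $\min_l\{\nu(v_l)-ld\}$ never decreases in the process. The only difference is organizational: the paper removes one leading monomial $cT^{n}$ of some $v_i$ at a time, bounding $\nu(cT^{n-m}f)$ and $\nu(cT^{n-m})$ directly from $\nu(f)=-md$, whereas you perform a full Euclidean division at each position and invoke Proposition~\ref{A9p} to control $\nu(q)$ via $\nu(r)$ — two packagings of the same estimate.
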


\begin{proof} The first assertion follows from the euclidian division. 
Consider any other representation 
\begin{displaymath}
   z = \sum_i v_i/f^i, \qquad v_i \in B[T],
\end{displaymath}
 Assume that $ n = \deg v_i \geq m$ for some
$i >0$. Let $c \in B$ be the highest coefficient of the polynomial $v_i$ and set $t = n - m$.
Then we conclude:
\begin{displaymath}
\nu (c T^tf) \geq \mu(c) + \nu (T^t) + \nu (f) =   \mu(c) + \nu (T^t) + \nu (T^m)
= \nu (c T^n) \geq \nu (v_i).
\end{displaymath}
We write:
\begin{displaymath}
v_i/f^i = ((v_i - cT^tf)/f^i) - (cT^t/f^{i-1}).
\end{displaymath}
If we insert this in the representation (\ref{A4e}) the number (\ref{A5e}) becomes bigger
because:
\begin{displaymath}
\nu ((v_i - cT^t f) \geq \nu (v_i), \qquad \nu (cT^t) \geq \nu (cT^n) \geq \nu (v_i). 
\end{displaymath}
Continuing this process proves the lemma. 
\end{proof}

The last Proposition applies in particular to a polynomial ring over a
field $A = K[T_1, \dots, T_d]$ with the standard degree valuation. By
Noether normalization any polynomial becomes regular with respect to
some variable after a coordinate change.

\begin{prop}\label{LOK2p}
Let $(A,\nu)$ be a ring with a pseudovaluation. Let $f,g \in A$.
Then $fg$ is localizing iff $f$ and $g$ are localizing.
\end{prop}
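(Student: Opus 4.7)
My plan is to verify each direction directly from the definition of localizing (Definition \ref{LOK2d}), using only the basic pseudovaluation inequality $\nu(ab)\geq \nu(a)+\nu(b)$ and the fact that $\nu$, being negative, satisfies $\nu(g)\leq 0$ and is finite on every nonzero element.

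For the ``if'' direction, assume $f$ is localizing with constants $(C_f,D_f)$ and $g$ with $(C_g,D_g)$. Since $(fg)^n x = f^n(g^n x)$, I apply the defining inequality for $f$ to the element $g^n x$, and then the one for $g$ to $x$:
\begin{displaymath}
\nu((fg)^n x) \;=\; \nu(f^n\,g^n x) \;\leq\; C_f\,\nu(g^n x) + nD_f \;\leq\; C_f C_g\,\nu(x) + n(C_f D_g + D_f).
\end{displaymath}
This is precisely the condition (\ref{LOK2e}) for $fg$ with $C = C_f C_g$ and $D = C_f D_g + D_f$. (One must note $C_f>0$, so multiplying the inequality for $\nu(g^n x)$ by $C_f$ preserves the direction.)

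For the ``only if'' direction, suppose $fg$ is localizing with constants $(C,D)$. The pseudovaluation inequality applied to the factorization $(fg)^n x = f^n x \cdot g^n$ gives
\begin{displaymath}
\nu((fg)^n x) \;\geq\; \nu(f^n x) + \nu(g^n) \;\geq\; \nu(f^n x) + n\nu(g),
\end{displaymath}
which rearranges to $\nu(f^n x)\leq \nu((fg)^n x) - n\nu(g) \leq C\nu(x) + n(D-\nu(g))$. Because $\nu$ is negative we have $\nu(g)\leq 0$, so $D-\nu(g)\geq D\geq 0$, and $f$ is therefore localizing with $(C,\,D-\nu(g))$. Swapping the roles of $f$ and $g$ gives the conclusion for $g$.

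The only potential sticking point is justifying that the pseudovaluation inequality can be invoked at every step, but this is immediate because negativity of $\nu$ forces $\nu$ to take values in $\mathbb{R}\cup\{\infty\}$ (the value $-\infty$ never occurs), so clause 2) of Definition \ref{A11d} applies without any side condition. No genuine obstacle appears; the argument is a clean back-and-forth between the pseudovaluation inequality and the linear bound (\ref{LOK2e}).
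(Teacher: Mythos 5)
Your argument is correct and matches the paper's approach: for the ``only if'' direction you use the pseudovaluation inequality on the factorization $(fg)^n x = (f^n x)\cdot g^n$ to isolate $\nu(f^n x) \leq \nu((fg)^n x) - n\nu(g) \leq C\nu(x)+n(D-\nu(g))$, exactly as the paper does (with $f$ and $g$ interchanged), and for the ``if'' direction — which the paper leaves to the reader — you chain the two localizing estimates via $(fg)^n x = f^n(g^n x)$. You also usefully verify the side conditions $C_fC_g>0$, $C_fD_g+D_f\geq 0$ and $D-\nu(g)\geq 0$ required by Definition~\ref{LOK2d}, which the paper's proof leaves implicit.
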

{\bf Proof}: Assume $fg$ is localizing. Then we find an inequality:
\begin{displaymath}
   \nu (f^ng^nx) \leq C \nu (x) +nD.
\end{displaymath}
On the other hand we have the inequality:
\begin{displaymath}
   n\nu(f) + \nu (g^nx) \leq \nu (f^ng^nx). 
\end{displaymath}
This shows that:
\begin{displaymath}
 \nu (g^nx) \leq C \nu(x) + n(D - \nu (f)).  
\end{displaymath}
We leave the opposite implication to the reader.

\begin{prop}\label{A5p}
Let $(A,\nu)$ be a ring with a pseudovaluation. Assume that $A$
is an integral domain, such that each non-zero element of $A$ is
localizing. Let $A \rightarrow B$ be a finite ring homomorphism 
such that $B$ is a free $A$-module. Let $\mu$ be an admissible 
pseudovaluation on $B$. Then any nonzero divisor in $B$ is localizing 
with respect to $\mu$.
\end{prop}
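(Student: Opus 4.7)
My plan is to reduce the estimate on $B$ to the hypothesis on $A$ using the norm/Cayley--Hamilton identity, so that a nonzero scalar $d \in A$ takes over the role of $f$. First, by Lemma~\ref{A3l} I would fix an $A$-basis $e_1,\ldots,e_n$ of $B$ and replace $\mu$ by the linearly equivalent order function $\nu^n$ on $A^n \cong B$ of (\ref{A11e}); since being localizing is invariant under linear equivalence (remark after Definition~\ref{LOK2d}), any estimate of the form (\ref{LOK2e}) carried out for $\nu^n$ suffices, up to bounded adjustments of the constants.

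Next, let $M_f \in M_n(A)$ be the matrix of multiplication by $f$ in this basis and set $d = \det M_f \in A$. Because $f$ is a non-zero-divisor in $B$ and $A$ is a domain over which $B$ is free of rank $n$, multiplication by $f$ is an injective $A$-linear endomorphism, so $d \neq 0$. The Cayley--Hamilton relation for $M_f$, read back inside $B$, gives
\[
f^n + c_{n-1} f^{n-1} + \cdots + c_1 f + c_0 = 0, \qquad c_0 = (-1)^n d, \; c_i \in A.
\]
Setting $h := f^{n-1} + c_{n-1} f^{n-2} + \cdots + c_1 \in B$, this rearranges to the key identity $f \cdot h = \pm d$, with $h \neq 0$ since $d \neq 0$.

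Now $d \in A \setminus \{0\}$ is localizing by hypothesis, so there exist $C_0 > 0$, $D_0 \geq 0$ with $\nu(d^k a) \leq C_0 \nu(a) + kD_0$ for all $a \in A$ and $k \geq 0$. Since $d \in A$, multiplication by $d$ on $A^n \cong B$ is coordinatewise, so taking the minimum over coordinates upgrades this to $\nu^n(d^k x) \leq C_0 \nu^n(x) + kD_0$ for every $x \in B$. Raising $fh = \pm d$ to the $k$-th power and multiplying by $x$ gives $d^k x = (\pm 1)^k f^k h^k x$, and iterating the product axiom of a pseudovaluation (Definition~\ref{A11d}) yields
\[
\mu(d^k x) = \mu(f^k h^k x) \geq k\mu(h) + \mu(f^k x),
\]
whence $\mu(f^k x) \leq \mu(d^k x) - k\mu(h)$. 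Feeding in the bound on $\nu^n(d^k x)$ and pushing it across the equivalence $\mu \sim \nu^n$ (which introduces only bounded additive constants absorbable into $C$ and $D$) produces the required inequality $\mu(f^k x) \leq C\mu(x) + kD$; the correction $-k\mu(h)$ keeps $D \geq 0$ because $\mu$ is negative, so $\mu(h) \leq 0$.

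The substantive step is the algebraic reduction in the second paragraph: Cayley--Hamilton converts multiplication by the a priori unknown element $f \in B$ into multiplication by the scalar $d = \det M_f \in A$, to which the localizing hypothesis applies directly. Everything else is bookkeeping --- the linear equivalence $\mu \sim \nu^n$ and the finite real correction $\mu(h)$ each contribute only bounded or linear-in-$k$ terms that fit into the constants $C$, $D$.
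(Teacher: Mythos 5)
Your proof is correct and essentially the same as the paper's: both arguments produce a nonzero scalar of $A$ that is a multiple of $f$ in $B$ (the paper from the constant term of a minimal integral-dependence relation, you from $\det M_f$ via Cayley--Hamilton), show via Lemma~\ref{A3l} and the coordinatewise action that such a scalar is localizing in $B$, and then transfer to $f$ itself. The one cosmetic difference is that the paper closes by citing Proposition~\ref{LOK2p}, while you inline the corresponding computation; also, your remark that the passage across $\mu\sim\nu^n$ ``introduces only bounded additive constants'' is slightly imprecise (there are multiplicative constants too), though the conclusion is unaffected since $\mu$ is negative.
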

\begin{proof}
 We choose an isomorphism of $A$-modules: $A^r \cong B$.
By Lemma \ref{A3l} the order function $\nu^r$ on $A^r$ is linearly
equivalent to an admissible pseudovaluation on $B$. Let $f \in A$, $f
\neq  0$. Then an inequality (\ref{LOK2e}) holds. It follows that for 
each $z \in A^r$: 
\begin{displaymath}
   \nu^r (f^n z) \leq C\nu^r (z) + nD. 
\end{displaymath}
This shows that $f$ is localizing in $B$. More generally consider a
non-zero divisor $b \in B$. Consider an equation of minimal degree:
\begin{displaymath}
   b^t + a_{t-1}b^{t-1} + \ldots + a_1 b + a_0 = 0, \qquad a_i \in A.
\end{displaymath}
Then $a_0 \neq 0$ and therefore localizing. But $a_0$ is a multiple of
$b$ in the ring $B$. Therefore $b$ is localizing in $B$ by Proposition
\ref{LOK2p}. 
\end{proof}

\begin{cor}\label{A5c}
Let $X \rightarrow \Spec K$ be a smooth scheme over a field $K$ of
characteristic $p$. Then
any point of $X$ has an affine neighbourhood $\Spec A$, such that any  
non-zero element in $A$ is localizing.
\end{cor}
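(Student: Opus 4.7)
The plan is to reduce the corollary to Proposition~\ref{A5p} by exhibiting a suitable affine neighborhood of the given point as a finite free module over a polynomial ring.

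First, I would choose an affine open $\Spec A \subseteq X$ containing the point. Smoothness of $X$ implies that $X$ is locally irreducible (its stalks are regular local rings, hence domains), so the irreducible components of $X$ are open; after restricting to the component through our point, we may assume $A$ is an integral domain of Krull dimension $d := \dim_x X$. By Noether normalization there is a finite injective ring homomorphism $R := K[T_1, \ldots, T_d] \hookrightarrow A$.

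The key step is to show that $A$ is free as an $R$-module. Smoothness of $A$ over $K$ forces $A$ to be Cohen-Macaulay of dimension $d$; combined with finiteness over the regular ring $R$, which has the same dimension $d$, the Auslander-Buchsbaum formula applied locally at each prime of $R$ shows that $A$ has projective dimension zero over $R$, hence $A$ is a finitely generated projective $R$-module. Since $R$ is a polynomial ring over a field, the Quillen-Suslin theorem then gives that $A$ is actually free over $R$. This is the only non-formal input to the proof; the main obstacle is exactly to control this flatness/freeness, which is why smoothness (rather than mere reducedness) is essential.

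Finally, apply Proposition~\ref{A5p} to the finite free extension $R \hookrightarrow A$. The ring $R$ with its standard degree valuation $\nu$ is an integral domain in which every nonzero element is localizing: the equality $\nu(f^n x) = \nu(x) + n\,\nu(f)$ verifies (\ref{LOK2e}) with $C = 1$ and $D = 0$. The proposition then shows that every nonzero divisor of $A$ is localizing with respect to an admissible pseudovaluation on $A$, and since $A$ is a domain, every nonzero element is a nonzero divisor. This finishes the proof.
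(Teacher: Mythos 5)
Your proof is correct, and it takes a genuinely different route from the paper. The paper invokes Kedlaya's theorem \cite{K1} that every point of a smooth $K$-scheme in characteristic $p$ has an affine neighbourhood which is finite and \'etale over some $\mathbb{A}^n_K$; from finite \'etale one gets finite flat, hence finite projective, and then (implicitly) projective-implies-free over a polynomial ring to land in the hypotheses of Proposition \ref{A5p}. You instead use Noether normalization to produce a finite injection $R = K[T_1, \ldots, T_d] \hookrightarrow A$, and then deduce flatness of $A$ over $R$ from miracle flatness (smooth $\Rightarrow$ regular $\Rightarrow$ Cohen--Macaulay, together with going-down over the normal ring $R$ to match heights), so that $A$ is finite projective over $R$; Quillen--Suslin then gives freeness. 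Both arguments funnel through Proposition \ref{A5p} applied to a finite free extension of a polynomial ring, and both need to convert ``finite projective over $K[T_1,\ldots,T_d]$'' to ``finite free''. The trade-off is that your route replaces Kedlaya's characteristic-$p$ specific \'etale-cover theorem with the more classical pair Noether normalization plus miracle flatness, at the cost of an explicit appeal to Quillen--Suslin; the paper's route gets the finite map from a single citation but the \'etale structure it provides is more than is actually used here. Your preliminary reduction to an integral affine neighbourhood (via local irreducibility of smooth schemes) is correct and necessary for Noether normalization as you invoke it.
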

\begin{proof} This is immediate from a result of \cite{K1} which says
that each point admits a neighbourhood which is finite and \'etale 
over an affine space $\mathbb{A}^n_{K}$. \end{proof}

Let us assume that $f \in A$ is localizing with constants $C,D$ given
by (\ref{LOK2e}). Then we will assume that the constant $d$ used in
the definition of $\nu'$ on $A_f$ is bigger than $D$. This 
can be done with no loss of generality because the equivalence class
of $\nu'$ doesn't depend on $d$.

\begin{prop}\label{A6p}
Let $(A,\nu)$ be a ring with a negative pseudovaluation.
Let $f \in A$ be a localizing element.  Each $z \in A_f$ has a unique
representation 
\begin{displaymath}
   z = a/f^m, \quad \text{where} \; a \in A, \; f \nmid a. 
\end{displaymath}
We define a real valued function $\sigma$ on $A_f$:
\begin{displaymath}
   \sigma (z) = \nu (a) - md.
\end{displaymath}
Then there exists a real constant $E > 0$, such that:
\begin{displaymath}
   \nu' (z) \geq \sigma (z) \geq E \nu'(z). 
\end{displaymath}
In particular, the restriction of $\nu'$ to $A$ is linearly equivalent
to $\nu$. 
\end{prop}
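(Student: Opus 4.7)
My plan is to treat four claims in sequence: uniqueness of the canonical fraction, the easy estimate $\nu'(z) \geq \sigma(z)$, the substantive estimate $\sigma(z) \geq E\nu'(z)$, and the linear-equivalence corollary.

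For uniqueness I would use that $f$ is a non-zero divisor (built into the localizing hypothesis). Given two such expressions $a/f^m = a'/f^{m'}$ with $f \nmid a$, $f \nmid a'$ and, say, $m \leq m'$, clearing denominators in $A_f$ yields $af^{m'-m} = a'$ in $A$; since $f \nmid a'$ we must have $m = m'$, and then $a = a'$. The easy inequality $\nu'(z) \geq \sigma(z)$ is immediate because the canonical expression $z = a/f^m$ is already one of the representations admitted in the supremum defining $\nu'(z)$, and contributes the value $\nu(a) - md = \sigma(z)$.

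For the substantive direction I plan to route through Lemma \ref{LOK7l}, which provides $\tau(z) \geq Q\nu'(z)$; it then suffices to produce $E' > 0$ with $\sigma(z) \geq E'\tau(z)$, and take $E = E'Q$. The key structural point is that every single-fraction representation $z = b/f^n$ arises from the canonical one by multiplication: since $f$ is a non-zero divisor, $bf^m = af^n$ in $A$, and $f \nmid a$ forces $n \geq m$ and $b = af^{n-m}$. The localizing hypothesis together with the standing assumption $d > D$ then yields
\[
\nu(b) - nd \;=\; \nu(af^{n-m}) - nd \;\leq\; C\nu(a) + (n-m)D - nd \;=\; C\nu(a) - md + (n-m)(D - d),
\]
and since $(n-m)(D-d) \leq 0$, taking the supremum over $n \geq m$ gives $\tau(z) \leq C\nu(a) - md$. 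I would now reduce to $C \leq 1$ using the flexibility noted after Definition \ref{LOK2d}. Both $\sigma(z)$ and $\tau(z)$ are $\leq 0$, and the chain
\[
-\tau(z) \;\geq\; md - C\nu(a) \;\geq\; C\bigl(md - \nu(a)\bigr) \;=\; -C\sigma(z)
\]
(the last inequality holds because $(1-C)md \geq 0$) rearranges to $\sigma(z) \geq \tau(z)/C$, so $E' = 1/C$ and $E = Q/C$ suffice.

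The final assertion follows by specializing the sandwich to $z = a \in A \subset A_f$: here the canonical representation is $a/f^0$, so $\sigma(a) = \nu(a)$, and the two-sided bound becomes $\nu'(a) \geq \nu(a) \geq E\nu'(a)$, precisely the linear equivalence of $\nu$ and $\nu'|_A$. The main technical obstacle in the plan is the structural step identifying every power-of-$f$ denominator representation with a multiple of the canonical one; once this is pinned down, the localizing property controls the growth of $\nu(af^k)$ exactly strongly enough to keep $|\tau(z)|$ within a fixed linear factor of $|\sigma(z)|$.
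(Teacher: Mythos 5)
Your proposal is correct and follows essentially the same route as the paper: reduce via Lemma~\ref{LOK7l} to comparing $\sigma$ with $\tau$, parametrize the single-fraction representations of $z$ as $af^{r}/f^{m+r}$, apply the localizing inequality with $0 < C \leq 1$ and $d \geq D$, and conclude $\tau(z) \leq C\sigma(z)$. One small nitpick: the remark after Definition~\ref{LOK2d} is about shrinking $D$, not $C$; but the reduction to $C\leq 1$ you need is anyway immediate from the fact that $\nu$ is negative, since $C'\nu(x)\geq C\nu(x)$ whenever $C'\leq C$ and $\nu(x)\leq 0$, so the inequality $(\ref{LOK2e})$ only improves when $C$ is decreased.
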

\begin{proof} By Lemma \ref{LOK7l} it suffices to show the last inequality with
$\nu'$ replaced by $\tau$. All representations 
(\ref{LOK1e}) of $z$ are of the form:
\begin{displaymath}
   af^r/f^{m+r}.
\end{displaymath}
Since $f$ is localizing there are real numbers $1 > C>0$ and $D\geq 0$
such that:
\begin{displaymath}
\begin{array}{lcl}
\nu(af^r) - (m+r)d & \leq & C\nu(a) + rD -md -rd\\
& \leq & C(\nu(a) - md) + (D-d)r \leq C\sigma (z)  + (D-d)r.
\end{array}
\end{displaymath}
We may assume that $d \geq D$. Then the inequality above implies
\begin{displaymath}
   \tau (z) \leq C \sigma (z).
\end{displaymath}
\end{proof}

\begin{cor}\label{A13c}
Let $(B,\mu)$ be an integral domain with a pseudovaluation $\mu$.
Assume that each non-zero element is localizing. We endow $B[T]$
with a pseudovaluation of Example 1.

Then each non-zero element in $B[T]$ is localizing.
\end{cor}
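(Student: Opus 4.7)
My plan is to reduce to the monic case by inverting the leading coefficient of $f$. Write $f = b_n T^n + b_{n-1} T^{n-1} + \ldots + b_0 \in B[T]$ with $b_n \neq 0$. By hypothesis $b_n$ is localizing in $B$, so Example 2 together with Proposition \ref{A6p} furnishes a pseudovaluation $\mu'$ on the localization $B' := B[b_n^{-1}]$ whose restriction to $B$ is linearly equivalent to $\mu$. I would then endow $B'[T]$ with the Example 1 pseudovaluation $\nu'$ built from $\mu'$ and the same constant $d$, giving an inclusion of pseudovaluated rings $B[T] \hookrightarrow B'[T]$.

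Next I would show that $f$ is localizing in $B'[T]$. Since $b_n$ is a unit in $B'$, one has the factorization $f = b_n \cdot g$ with
\begin{displaymath}
g = T^n + (b_{n-1}/b_n) T^{n-1} + \ldots + b_0/b_n \in B'[T]
\end{displaymath}
monic, and by Proposition \ref{A9p} (whose last assertion is that any monic polynomial is localizing) the polynomial $g$ is localizing in $B'[T]$. The unit $b_n \in B'$ is itself localizing in $B'$ (units are always localizing, as noted after Definition \ref{LOK2d}), and hence localizing as a constant element of $B'[T]$. Proposition \ref{LOK2p} then yields that $f = b_n \cdot g$ is localizing in $B'[T]$.

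Finally I would descend to $B[T]$. The linear equivalence of $\mu$ and $\mu'|_B$ on $B$ propagates through the Example 1 construction to a linear equivalence of $\nu_{B[T]}$ and the restriction $\nu'|_{B[T]}$ on $B[T]$. Restricting the inequality $\nu'(f^m x) \leq C \nu'(x) + mD$ to $x \in B[T] \subset B'[T]$ and then invoking the invariance of localizing-ness under linear equivalence shows that $f$ is localizing in $B[T]$ with respect to its original pseudovaluation.

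The main obstacle, in my view, is the bookkeeping around the several linear equivalences involved: one must confirm that localizing-ness genuinely passes through (i) the quotient pseudovaluation on $B[b_n^{-1}]$ produced by Proposition \ref{A6p}, and (ii) the extension of a linear equivalence of base pseudovaluations to the Example 1 polynomial-ring pseudovaluations. Both steps are essentially formal, but require care because the pseudovaluation of a product is in general only bounded below by the sum of pseudovaluations.
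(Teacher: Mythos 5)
Your proof is correct and follows essentially the same route as the paper: reduce to the monic case by inverting the leading coefficient of $f$, apply Proposition \ref{A9p} (monic polynomials are localizing) together with Proposition \ref{LOK2p} (products of localizing elements are localizing), and descend to $B[T]$ via linear equivalence of pseudovaluations. The paper's own proof is much terser --- it simply says ``by the remark preceding Proposition \ref{A9p} we may assume that $f$ is a regular polynomial'' and implicitly leans on Proposition \ref{A6p} for the descent --- but the factorization $f = b_n g$, the citation of Proposition \ref{LOK2p}, and the propagation of linear equivalence through the Example~1 construction that you spell out are precisely what that remark tacitly assumes.
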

\begin{proof}
 Clearly each $b \in B$, $b\neq 0$ is localizing in
$B[T]$. By the Proposition it suffices to find for a given $f \in
B[T]$ an element $b \in B$, such that $f$ is localizing in $B_b[T]$.
By the remark preceding Proposition \ref{A9p} we may assume that $f$ is a 
regular polynomial. Then we can apply this proposition. 
\end{proof}

The following corollary would allow to prove Corollary \ref{A5c} 
more generally by considering standard \'etale neighbourhoods instead 
of Kedlaya's result. 

\begin{cor}
Let $(A,\nu)$ be a noetherian ring  with a negative pseudovaluation.
Let $a,f \in A$ be two localizing elements. Then $a$ is localizing in 
$A_f$. 
\end{cor}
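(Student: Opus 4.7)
The plan is to reduce, via Proposition~\ref{A6p}, to an estimate on the canonical function $\sigma$, and then exploit the noetherian hypothesis through the Artin--Rees lemma. Since $f$ is localizing in $A$, Proposition~\ref{A6p} gives that $\nu'$ on $A_f$ is linearly equivalent to $\sigma$, where $\sigma(z) = \nu(b) - md$ for the canonical representation $z = b/f^m$ with $f \nmid b$. Because linear equivalence preserves the property of being localizing, it suffices to find constants $C > 0$ and $D \geq 0$ with $\sigma(a^n z) \leq C\sigma(z) + nD$ for all $z \in A_f$ and all $n \geq 0$. After shrinking if necessary, we may assume the localizing constant $C_a$ for $a$ in $A$ satisfies $C_a \leq 1$.

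Writing $z = b/f^m$ canonically and setting $s_n = v_f(a^n b)$, write $a^n b = f^{s_n} c_n$ with $f \nmid c_n$. In the main case $s_n \leq m$, the canonical form of $a^n z$ is $c_n/f^{m-s_n}$, so $\sigma(a^n z) = \nu(c_n) - (m-s_n)d$. Combining the pseudovaluation inequality $\nu(c_n) \leq \nu(a^n b) - s_n \nu(f)$ (from $a^n b = f^{s_n} c_n$) with the localizing property of $a$, $\nu(a^n b) \leq C_a \nu(b) + n D_a$, together with the identity $\nu(b) = \sigma(z) + md$, a short calculation using $(C_a - 1)md \leq 0$ yields
\[
  \sigma(a^n z) \;\leq\; C_a\,\sigma(z) + s_n\bigl(d - \nu(f)\bigr) + n D_a.
\]
The complementary case $s_n > m$, where $a^n z = f^{s_n-m} c_n \in A$, is handled analogously by additionally invoking the localizing property of $f$ on $f^{s_n - m} c_n$ and choosing the constant $d$ close enough to the localizing constant of $f$ so that the $m$-dependent term has the right sign.

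The main obstacle is a linear bound on $s_n$ uniform in $b$ (with $f \nmid b$), and this is where the noetherian hypothesis enters essentially. By the Artin--Rees lemma applied to the ideals $(f)$ and $(a)$ in $A$, there exists $k \geq 0$ such that $(f^v) \cap (a) \subset (f^{v-k})(a)$ for all $v \geq k$. If $s_n > nk$, then $a^n b \in (f^{s_n}) \cap (a) \subset (f^{s_n-k})(a)$, so $a^n b = f^{s_n - k} a c_1$ for some $c_1 \in A$. Since $a$ is a non-zero divisor (as every localizing element is), we cancel $a$ to obtain $v_f(a^{n-1} b) \geq s_n - k$. Iterating this descent $n$ times gives $v_f(b) \geq s_n - nk > 0$, contradicting $f \nmid b$. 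Hence $s_n \leq nk$, and substituting into the inequality above yields the desired localizing inequality with $C = C_a$ and $D = k(d - \nu(f)) + D_a$.
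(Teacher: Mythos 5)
Your proof takes essentially the same approach as the paper's. Both apply the Artin--Rees lemma (to the ideals $(f)$ and $(a)$) to bound the $f$-adic valuation of $a^n b$ linearly in $n$ when $f \nmid b$, and then combine this with the function $\sigma$ of Proposition~\ref{A6p} and the localizing inequality for $a$ in $A$ to obtain the required estimate on $\sigma(a^n z)$; you merely spell out some of the computations that the paper explicitly omits.
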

\begin{proof} By the Lemma of Artin-Rees there is a natural number $r$, 
such that for $m \geq r$
\begin{displaymath}
   ax \in f^m A \quad \text{implies} \quad x \in f^{m-r}A.
\end{displaymath}
Assume that $x \in A$, but $x \notin fA$. Then we conclude that for
each $n \in \mathbb{N}$
\begin{displaymath}
   a^n x \in f^m A \quad \text{implies} \quad m \leq nr.
\end{displaymath}
Consider a reduced fraction $x/f^m \in A_f$. To show that $a$ is
localizing it suffices to find an estimation for
\begin{displaymath}
   \sigma (a^n (x/f^m)),
\end{displaymath} 
where $\sigma$ is the function of Proposition \ref{A6p}:
\begin{displaymath}
   \sigma (x/f^m) = \nu (x) - md.
\end{displaymath}
By the remarks above we may write with $y \notin fA$:
\begin{displaymath}
   \frac{a^n x}{f^m} = \frac{yf^s}{f^m}, \quad s \leq nr.
\end{displaymath}
Using this equation we obtain:
\begin{displaymath}
\begin{array}{ll}
 \nu(y) & \leq \nu (yf^s) - \nu (f^s) \leq \nu (a^nx) - s\nu(f)\\ 
 & \leq C\nu (x) + nD - nr \nu (f).
\end{array}
\end{displaymath}
Here $C \leq 1, D$ are positive real constants, which exists because
$a$ is localizing in $A$.

Now it is easy to give an estimation for
\begin{displaymath}
    \sigma (a^n (x/f^m))  = \sigma (yf^s/f^{m}).
\end{displaymath}
We omit the details. 
\end{proof}

We reformulate Proposition \ref{A6p} in the case where $A = R[T_1,
\ldots, T_d]$ is a  polynomial algebra over an integral domain $R$
with the 
standard degree valuation $\nu$. It extends to a valuation on the
quotient field of $A$ which we denote by $\nu$ too. 
Let $f \in A$ be a non-zero element. We define $\nu'$ on $A_f$
associated to $d >0$ as before (\ref{A5e}).

We define a second pseudovaluation $\mu $ on the ring $A_f$ as follows.
Let $\vartheta (z)$ be the smallest integer $n\geq 0$ such that $f^n z
\in A$. We set: 
\begin{equation}\label{A7e}
\mu (z) = \min \{\nu' (z), - d\vartheta (z) \}  
\end{equation}  

\begin{prop}
Let $A$ be a polynomial ring with the standard degree valuation $\nu$.
Let $f \in A$ be a non constant polynomial. Let us define
pseudovaluations $\nu'$ resp. $\mu$ on $A_f$ by the formulas
(\ref{A5e}) resp. (\ref{A7e}). Then there are constants 
$Q_1$ and $Q_2$, such that
\begin{displaymath}
Q_1\mu \geq \nu' \geq Q_2 \mu.
\end{displaymath}
\end{prop}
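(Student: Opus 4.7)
The inequality $\nu' \geq \mu$ is immediate from the definition $\mu(z) = \min\{\nu'(z), -d\vartheta(z)\}$, and since $\nu'(z) \leq 0$ this gives $\nu'(z) \geq \mu(z) \geq Q_2 \mu(z)$ with $Q_2 = 1$. So the whole content is in the bound $Q_1 \mu \geq \nu'$.

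My plan is to invoke Proposition \ref{A6p}. To do so I first need $f$ to be localizing; this follows from Corollary \ref{A13c}, since $R$ with its trivial (negative) valuation has the property that every non-zero element is localizing, and iterated application along the variables $T_1, \ldots, T_d$ yields that every non-zero polynomial in $A$ is localizing. Hence Proposition \ref{A6p} applies, giving the unique reduced representation $z = a/f^m$ with $f \nmid a$, a function $\sigma(z) = \nu(a) - md$, and a constant $E > 0$ with
\begin{displaymath}
   \nu'(z) \geq \sigma(z) \geq E \nu'(z).
\end{displaymath}
Comparing with the definition of $\vartheta$, the reduced exponent is $m = \vartheta(z)$, and because the standard degree valuation $\nu$ is negative we have $\nu(a) \leq 0$, hence
\begin{displaymath}
   \sigma(z) \leq -d\vartheta(z).
\end{displaymath}

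Combining the two chains: $E\nu'(z) \leq \sigma(z) \leq -d\vartheta(z)$, so $\nu'(z) \leq -d\vartheta(z)/E$. Also the first part of Proposition \ref{A6p} forces $E \geq 1$ (since $\nu'(z) \geq \sigma(z) \geq E\nu'(z)$ with $\nu'(z) \leq 0$). I would now set $Q_1 = 1/E \leq 1$ and verify the inequality $Q_1 \mu(z) \geq \nu'(z)$ by splitting into the two cases from the definition of $\mu$: if $\mu(z) = \nu'(z)$, then $Q_1 \mu(z) = Q_1 \nu'(z) \geq \nu'(z)$ because $Q_1 \leq 1$ and $\nu'(z) \leq 0$; if $\mu(z) = -d\vartheta(z)$, then $Q_1 \mu(z) = -d\vartheta(z)/E \geq \nu'(z)$ by the bound just derived.

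The only subtlety is sign-bookkeeping: one must keep in mind throughout that all values in question are $\leq 0$, which is why $Q_1$ ends up being a number $\leq 1$ rather than $\geq 1$, and why the constant $E$ from Proposition \ref{A6p} automatically satisfies $E \geq 1$. No further ingredients are needed beyond Corollary \ref{A13c} and Proposition \ref{A6p}, and the role of the new function $\mu$ is essentially to make the term $-d\vartheta(z)$ visible separately so that the pole-order contribution to $\nu'$ can be read off directly from the reduced fraction.
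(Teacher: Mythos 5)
Your proof is correct and rests on the same key ingredient as the paper's: Proposition \ref{A6p} applied to the reduced fraction $z=a/f^{m}$ with $m=\vartheta(z)$, together with the observation $\sigma(z)=\nu(a)-md\le -d\vartheta(z)$ coming from $\nu(a)\le 0$. The one genuine difference is in the direction $\nu'\ge Q_2\mu$: you simply read off $\nu'\ge\mu$ from $\mu=\min\{\nu',-d\vartheta\}$ and take $Q_2=1$, whereas the paper instead establishes the stronger inequality $C\mu\le\sigma$ for a suitably large $C>1$ by a short case analysis and then combines it with $\nu'\ge\sigma$. Your shortcut is valid and a bit cleaner, though it yields only the qualitative statement while the paper's version also bounds $\sigma$ against $\mu$ directly.

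Two small simplifications are available. Invoking Corollary \ref{A13c} to see that $f$ is localizing is more than you need: for the standard degree valuation on $R[T_1,\dots,T_d]$ one has the identity $\nu(f^{n}x)=\nu(x)+n\nu(f)$, so $(\ref{LOK2e})$ holds with $C=1$, $D=0$, as the paper remarks immediately after Definition \ref{LOK2d}. And your $Q_1$-argument, including the observation $E\ge 1$ and the two-case split, can be compressed by noting at once that $\sigma\le\mu$ (because $\sigma\le\nu'$ and $\sigma\le -d\vartheta$), whence $\mu\ge\sigma\ge E\nu'$ and $Q_1=1/E$ follows in one line.
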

\begin{proof} We write an element $z \in A_f$ as a reduced fraction
\begin{displaymath}
   z = (a/f^m),
\end{displaymath}
such that $m = \vartheta (z)$. 
By Proposition \ref{A6p} it is enough to compare $\mu$ with the
function $\sigma$. The inequality $\sigma (z) \leq \mu (z)$ is
obvious. We show that for a sufficiently big number $C >1$:
\begin{displaymath}
   C\mu (z) \leq \nu (a) - md.
\end{displaymath}
This is obvious if $-Cmd \leq -md +\nu (a)$. Therefore we can make
that assumption:
\begin{displaymath}
   -(C-1)md \geq \nu (a).
\end{displaymath}
We have to find $C$ such that the following inequality is satisfied:
\begin{displaymath}
   C(\nu (a) - m\nu (f)) \leq \nu (a) - md.
\end{displaymath}
We have by assumption: 
\begin{displaymath}
   (C-1) \nu (a) \leq - (C-1)^2 md.
\end{displaymath}
Therefore it suffices to show that for big $C$:
\begin{displaymath}
  - (C-1)^2 md \leq m(C\nu(f) -d). 
\end{displaymath}
But this is obvious.
\end{proof}

\section{Overconvergent Witt vectors}

Let us fix a prime number $p$.
We are going to introduce the ring of overconvergent Witt vectors.
Let $A$ be a ring with a proper pseudovaluation $\nu$. We assume that $pA =0$. 

Let $W(A)$ be the ring of Witt vectors. For any Witt vector
\begin{displaymath}
\alpha = (a_0, a_1, a_2, \ldots  ) \in W(A)
\end{displaymath} 
we consider the following set $\mathcal{T}(\alpha)$ in the $x-y$-plane:
\begin{displaymath}
(p^{-i} \nu(a_i),i), \qquad \nu(a_i) \neq \infty.
\end{displaymath} 
For $\varepsilon, c \in \mathbb{R}, \; \varepsilon > 0$ we consider the half plane:
\begin{displaymath}
H_{\varepsilon, c} = \{ (x,y) \in \mathbb{R}^2 \;|\; y \geq -\varepsilon x + c \}.
\end{displaymath} 
Moreover we consider for all $c \in \mathbb{R}$ the half plane:
\begin{displaymath}
H_c =  \{ (x,y) \in \mathbb{R}^2 \;|\; x \geq c \}.
\end{displaymath} 
Let $\mathcal{H}$ the set of all half planes of the two different types above. 
We define the Newton polygon $\NP(\alpha)$:
\begin{displaymath}
\NP(\alpha) = \bigcap_{H \in \mathcal{H}, \mathcal{T}(\alpha) \in H} H.
\end{displaymath} 

\begin{df}
We say that a Witt vector $\alpha$  has radius of convergence $\varepsilon > 0$, if there is a 
constant $c \in \mathbb{R}$, such that
\begin{displaymath}
i \geq - \varepsilon p^{-i}\nu(a_i) + c.
\end{displaymath} 
We denote the set of these Witt vectors by $W^{\varepsilon} (A)$.
\end{df}
Equivalently one may say that the Newton polygon $\NP(\alpha)$ lies
above a line of slope $-\varepsilon$.

We define the Gauss norm $\gamma_{\varepsilon} : W(A) \rightarrow \mathbb{R}$: 
\begin{equation}\label{A8e}
\gamma_{\varepsilon}(\alpha) = \inf \{i + \varepsilon p^{-i}\nu(a_i)\}
\end{equation} 
Convergence of radius $\varepsilon > 0$ means that
$\gamma_{\varepsilon}(\alpha) \neq - \infty$. We will denote the set of
Witt vectors of radius of convergence $\varepsilon$ by $W^{\varepsilon}
(A)$.

\begin{prop}\label{A10p}
  Let $(A, \nu)$ a ring with a proper pseudovaluation, such that $pA =
  0$. Then for any $\varepsilon > 0$ the Gauss norm $\gamma_{\varepsilon}$
  is a pseudovaluation on $W(A)$. In particular $W^{\varepsilon} (A)$ is
  a ring. 

If we assume moreover that $\nu$ is a valuation, we have the equality
for arbitrary $\xi, \eta \in W^{\varepsilon}(A)$:
\begin{equation}\label{A25e}
\gamma_{\varepsilon}(\xi \eta) = \gamma_{\varepsilon}(\xi) + \gamma_{\varepsilon}(\eta).
\end{equation} 
\end{prop}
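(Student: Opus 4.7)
The plan is to verify the three axioms of Definition \ref{A11d} for $\gamma_\varepsilon$ and then, under the valuation hypothesis, to upgrade submultiplicativity to equality. The identity $\gamma_\varepsilon(1) = 0$ is immediate from $1 = (1, 0, 0, \ldots)$. For subadditivity, I would analyze the Witt addition polynomial $S_n$: assigning weight $p^i$ to each variable $a_i, b_i$, an induction from $w_n(S(a,b)) = w_n(a) + w_n(b)$ shows every monomial of $S_n$ has total weight exactly $p^n$. For such a monomial $m = \prod a_i^{k_i}\prod b_j^{l_j}$ (so $\sum k_i p^i + \sum l_j p^j = p^n$ and $i, j \leq n$), the inequalities $\nu(a_i) \geq p^i(\gamma_\varepsilon(\xi)-i)/\varepsilon$ and $\nu(b_j) \geq p^j(\gamma_\varepsilon(\eta)-j)/\varepsilon$ combine with $\sum k_i i\, p^i + \sum l_j j\, p^j \leq n p^n$ to give $n + \varepsilon p^{-n}\nu(m) \geq \min\{\gamma_\varepsilon(\xi), \gamma_\varepsilon(\eta)\}$.

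For submultiplicativity the same kind of weighted analysis applied to $P_n$ falls short by a linear term in $n$, so I would take a different route, based on the decomposition $\xi = \sum_{i \geq 0} V^i[a_i]$ (convergent in the $V$-adic topology of $W(A)$) together with the characteristic-$p$ product identity
\[
V^i[a]\cdot V^j[b] \;=\; V^{i+j}\bigl[a^{p^j}\, b^{p^i}\bigr],
\]
which follows from $x\, V(y) = V(F(x)\, y)$, $FV = p$, and $p\cdot V^m[c] = V^{m+1}[c^p]$ (the last specific to characteristic $p$). A direct computation then gives
\[
\gamma_\varepsilon\bigl(V^i[a_i]\cdot V^j[b_j]\bigr) \;\geq\; \gamma_\varepsilon(V^i[a_i]) + \gamma_\varepsilon(V^j[b_j]) \;\geq\; \gamma_\varepsilon(\xi) + \gamma_\varepsilon(\eta)
\]
for each pair $(i, j)$. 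To globalize, I would truncate: with $\sigma_N = \sum_{i\leq N}V^i[a_i]$ and $\tau_N = \sum_{j\leq N}V^j[b_j]$, finite subadditivity gives $\gamma_\varepsilon(\sigma_N \tau_N) \geq \gamma_\varepsilon(\xi) + \gamma_\varepsilon(\eta)$. The key point is that $V^{N+1}W(A)$ is an ideal, so $\xi\eta - \sigma_N\tau_N \in V^{N+1}W(A)$, i.e., the two products agree in positions $\leq N$. Applying this at $N = k$ yields $k + \varepsilon p^{-k}\nu(c_k(\xi\eta)) \geq \gamma_\varepsilon(\sigma_k\tau_k) \geq \gamma_\varepsilon(\xi) + \gamma_\varepsilon(\eta)$, and taking the infimum over $k$ gives submultiplicativity.

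For the equality (\ref{A25e}) in the valuation case only the reverse inequality remains. I would pick indices $i_0, j_0$ attaining the infima $\gamma_\varepsilon(\xi), \gamma_\varepsilon(\eta)$ (existence following from the fact that $i + \varepsilon p^{-i}\nu(a_i) \to \infty$ on a Witt vector of radius $\varepsilon$, or by approximation otherwise). Since $\nu$ is a valuation, $\nu(a_{i_0}^{p^{j_0}} b_{j_0}^{p^{i_0}}) = p^{j_0}\nu(a_{i_0}) + p^{i_0}\nu(b_{j_0})$, so the atomic product $V^{i_0+j_0}[a_{i_0}^{p^{j_0}} b_{j_0}^{p^{i_0}}]$ has Gauss norm exactly $\gamma_\varepsilon(\xi) + \gamma_\varepsilon(\eta)$. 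The main obstacle is to show that this value is actually realised by the $(i_0 + j_0)$-th component of $\xi\eta$, i.e., that the leading term is not cancelled by other contributions — either other atomic products with the same $i + j$, or carries from lower positions. Assuming the minimizer is unique, one checks that the competing contributions have strictly larger valuation at position $i_0 + j_0$, and the valuation property $\nu(x + y) = \min\{\nu(x), \nu(y)\}$ for unequal values then lets the leading term survive. The truly delicate case is the non-unique one, where $\varepsilon$ corresponds to an edge of the Newton polygon; I expect to handle this either by a perturbation of $\varepsilon$ or by a direct combinatorial argument on the Newton polygon of the product.
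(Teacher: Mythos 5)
Your treatment of the first two axioms is essentially the paper's. For additivity, both you and the paper analyze the weight-$p^n$ homogeneity of the Witt addition polynomial $S_n$ and run the same inequality; your calculation is correct. For the submultiplicative inequality, the paper also reduces to products $V^i[a]\cdot V^j[b] = V^{i+j}[a^{p^j}b^{p^i}]$; you spell out the truncation-and-pass-to-the-limit step (using that $V^{N+1}W(A)$ is an ideal and that additivity controls finite sums) which the paper leaves implicit, and that step is sound.

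For the equality (\ref{A25e}) there is a genuine gap, which you yourself flag. Trying to read off $\gamma_{\varepsilon}(\xi\eta)$ directly from the $(i_0{+}j_0)$-th component of the full product runs into exactly the two difficulties you name: other atomic products with $i+j = i_0+j_0$, and carry terms from the Witt addition of all the lower products. In the unique-minimizer case the first issue can be handled by the valuation hypothesis as you indicate, but the carry analysis is a nontrivial weighted-monomial estimate (of the same flavour as the additivity step but now for a sum of infinitely many products), and you don't carry it out. In the non-unique case your proposed perturbation of $\varepsilon$ is plausible for the non-attained infimum, but it does not obviously resolve ties among minimizers at a fixed $\varepsilon$, and you do not give the combinatorial Newton-polygon argument you gesture at.

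The paper sidesteps both difficulties with a cleaner organisation that you should compare with. Choose $i_0$, $j_0$ to be the \emph{minimal} indices at which the infima defining $\gamma_{\varepsilon}(\xi)$, $\gamma_{\varepsilon}(\eta)$ are attained (the non-attained case is deferred to a limit in $\varepsilon$ at the very end). Split $\xi = \xi' + \xi_1$ with $\xi' = (a_0,\dots,a_{i_0-1},0,\dots)$ and $\xi_1 \in V^{i_0}W(A)$ the tail, and likewise $\eta = \eta' + \eta_1$. Minimality gives $\gamma_{\varepsilon}(\xi') > \gamma_{\varepsilon}(\xi) = \gamma_{\varepsilon}(\xi_1)$ and $\gamma_{\varepsilon}(\eta') > \gamma_{\varepsilon}(\eta) = \gamma_{\varepsilon}(\eta_1)$. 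Now
\[
\xi\eta \;=\; \xi_1\eta_1 \;+\; \xi_1\eta' \;+\; \xi'\eta_1 \;+\; \xi'\eta',
\]
and by the already-proven submultiplicativity the last three terms have $\gamma_{\varepsilon}$ strictly larger than $\gamma_{\varepsilon}(\xi) + \gamma_{\varepsilon}(\eta)$. Since both $\xi_1$ and $\eta_1$ vanish in positions $< i_0$ resp.\ $< j_0$, the component of $\xi_1\eta_1$ in position $i_0{+}j_0$ is exactly $a_{i_0}^{p^{j_0}}b_{j_0}^{p^{i_0}}$, with no carries and no competing atomic products, whence $\gamma_{\varepsilon}(\xi_1\eta_1) = \gamma_{\varepsilon}(\xi)+\gamma_{\varepsilon}(\eta)$ by the valuation property. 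Because $\gamma_{\varepsilon}$ is an order function, a sum of four terms in which exactly one has the strictly smallest value inherits that value, giving the equality without ever inspecting the components of the full product $\xi\eta$. This is the key manoeuvre your proposal is missing: pushing the peeling-off to the level of the \emph{Witt vectors themselves}, not of the individual atomic products, makes both the cancellation and the carry problems disappear.
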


\begin{proof} Clearly we may assume $\varepsilon = 1$. We set $\gamma =
\gamma_1$. The first two requirements of Definition \ref{A11d} are
clear. Consider two Wittvectors:
\begin{displaymath}
   \xi = (a_0, a_1, \ldots ) \in W(A), \quad \eta = (b_0, b_1, \ldots)
   \in W(A). 
\end{displaymath}
We begin to show the inequality:
\begin{displaymath}
   \gamma (\xi + \eta) \geq \min \{ \gamma (\xi), \gamma (\eta) \}.
\end{displaymath} 
We may assume that there is $g \in \mathbb{R}$, such that 
\begin{displaymath}
   i + p^{-i}\nu(a_i) \geq g, \quad  i + p^{-i}\nu(b_i) \geq g.
\end{displaymath}
We write
\begin{displaymath}
   \xi + \eta = (s_0, s_1, \ldots ).
\end{displaymath}
Let $S_m$ be the polynomials, which define the addition of the Witt vectors:
\begin{displaymath}
   s_m = S_m (a_0, \ldots, a_m, b_0, \ldots, b_m).
\end{displaymath}
We know that $S_m$ is a sum of monomials
\begin{displaymath}
   M = \pm a_0^{e_0}\cdot \ldots \cdot a_m^{e_m}b_0^{f_0}\cdot \ldots
   \cdot b_m^{f_m},
\end{displaymath}
such that 
\begin{displaymath}
   \sum_{i=0}^{m}p^i e_i + \sum_{i=0}^{m}p^i f_i = p^m.
\end{displaymath}
We have to show that $p^{-m}\nu(m) + m \geq m$. We compute:
\begin{displaymath}
\begin{array}{l}
 p^{-m} \nu (M) + m \\[2mm]
\geq p^{-m}(\sum_{i=0}^{m} e_i\nu(a_i) + \sum_{i=0}^{m} f_i \nu (b_i)\\[2mm]
+ \sum_{i=0}^{m} p^ie_i m + \sum_{i=0}^{m} p^if_i m)\\[2mm]
\geq p^{-m}(\sum_{i=0}^{m}p^ie_i(p^{-i}\nu(a_i) +i) +
\sum_{i=0}^{m}p^if_i(p^{-i}\nu(b_i) +i))\\[2mm]
\geq p^{-m}(\sum_{i=0}^{m}p^ie_i g + \sum_{i=0}^{m}p^if_i g) \geq g.  
\end{array}
\end{displaymath}
This proves the fourth requirement of Definition \ref{A11d}.

Next we prove the inequality:
\begin{equation}\label{A29e}
   \gamma (\xi \eta) \geq \gamma (\xi) + \gamma (\eta).
\end{equation}
By the inequality already shown we are reduced to the case
\begin{displaymath}
   \xi = ~^{V^i} [a], \quad \text{and} \; \eta = ~^{V^j}[b].
\end{displaymath} 
Since by assumption $F$ and $V$ commute on $W(A)$ we find 
\begin{displaymath}
   \xi \eta = ~^{V^{i+j}}[a^{p^j} b^{p^i}].
\end{displaymath}
We obtain:
\begin{equation}\label{A23e}
\begin{array}{ll}
\gamma(\xi \eta) & = \frac{\nu(a^{p^j}b^{p^i})}{p^{i+j}} + i + j\\[2mm]  
& \geq \frac{p^j\nu(a) + p^i \nu(b)}{p^{i+j}} + i + j = \gamma (\xi) +
\gamma (\eta). 
\end{array}
\end{equation}
This proves that $\gamma$ is a pseudovaluation. 

Finally we prove the equality (\ref{A25e}) if $\nu$ is a valuation. We
remark that (\ref{A23e}) is an equality in this case. From this we
obtain (\ref{A25e}) in the case where
\begin{displaymath}
   \xi = ~^{V^i}[a] + \xi_1,  \quad \eta = ~^{V^j}[b] + \eta_2,
\end{displaymath}
where $\xi_1 \in V^{i+1}W(A)$, $\eta \in V^{j+1}W(A)$ and 
\begin{displaymath}
   \gamma (\xi_1) \geq \gamma  (~^{V^i}[a]), \quad \gamma (\eta_2) \geq
   \gamma (~^{V^j}[b]).
\end{displaymath}
Next we consider the case if there are $i$ and $j$ such that
\begin{displaymath}
   p^{-i}\nu(a_i) + i = \gamma (\xi), \quad  p^{-j}\nu(b_j) + j = \gamma (\eta).
\end{displaymath}
We assume that $i$ and $j$ are minimal with this property. Then we
write
\begin{displaymath}
\begin{array}{lll}
  \xi & = (a_0, \ldots, a_{i-1}, 0, \ldots) + \xi_1 & = \xi' + \xi_1\\
  \eta & = (b_0, \ldots, b_{j-1}, 0, \ldots) + \eta_1 & = \eta' + \eta_1.
\end{array}
\end{displaymath}
Then we have by our choice:
\begin{displaymath}
   \gamma (\xi') > \gamma (\xi) = \gamma (\xi_1), \;
   \gamma (\eta') > \gamma (\eta) = \gamma (\eta_1).
\end{displaymath}
By the case already treated we have $\gamma(\xi_1 \eta_1) = \gamma
(\xi_1) + \gamma (\eta_1)$. Then we obtain:
\begin{equation}\label{A27e}
   \gamma (\xi \eta) = \gamma (\xi_1 \eta_1 + \xi_1 \eta' + \xi'
   \eta_1 + \xi' \eta') \geq \min \{\gamma(\xi_1 \eta_1) +
   \gamma(\xi_1 \eta')  + \gamma(\xi'\eta_1) + \gamma(\xi' \eta') \}.
\end{equation}
But by the inequality (\ref{A29e}) this minimum is assumed only for
$\gamma(\xi_1 \eta_1)$ and therefore (\ref{A27e}) is an equality. 

Finally if $i$ and $j$ as above don't exist this becomes true if we
replace $\varepsilon$ by any $\delta$ which is a little smaller. If
$\delta$ approaches $\varepsilon$ we obtain the result. 
\end{proof}

We have the formulas:
\begin{equation}\label{A21e}
 \begin{array}{rcl}
\gamma_{\varepsilon} (~^V\alpha) & = & 1 + \gamma_{\varepsilon/p}(\alpha)\\
\gamma_{\varepsilon} (~^F\alpha) & \geq & \gamma_{p\varepsilon}(\alpha)\\
\gamma_{\varepsilon} (p) & = & 1.
\end{array}
\end{equation} 

\begin{df}\label{A10d}  
The union of the rings $W^{\varepsilon}(A)$ for $\varepsilon > 0$ is called
the ring of overconvergent Witt vectors $W^{\dagger}(A)$.
\end{df}

\begin{cor}\label{A10c}
Let $\alpha \in W^{\dagger}(A)$ and let $\delta > 0$ a real
number. Then there is an $\varepsilon > 0$  
such that $\gamma_{\varepsilon}(\alpha) > - \delta$.
\end{cor}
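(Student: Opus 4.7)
The plan is to unpack the definition of $W^{\dagger}(A)$ and then reduce the statement to a one-variable estimate. Since $\alpha = (a_0,a_1,\ldots) \in W^{\dagger}(A) = \bigcup_{\varepsilon > 0} W^{\varepsilon}(A)$ by Definition \ref{A10d}, there exists some $\varepsilon_0 > 0$ and a constant $c \in \mathbb{R}$ such that
\begin{equation*}
i + \varepsilon_0\, p^{-i}\,\nu(a_i) \geq c \quad \text{for all } i \geq 0,
\end{equation*}
with the inequality holding trivially for those $i$ with $\nu(a_i) = \infty$. The idea is to show that shrinking $\varepsilon$ toward $0$ only multiplies the lower bound by a small positive factor, so that $\gamma_{\varepsilon}(\alpha)$ stays controlled.

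Concretely, for any $\lambda \in (0,1]$ I would set $\varepsilon := \lambda\,\varepsilon_0$ and use the identity
\begin{equation*}
i + \varepsilon\, p^{-i}\,\nu(a_i) \;=\; (1-\lambda)\, i \;+\; \lambda\,\bigl(i + \varepsilon_0\, p^{-i}\,\nu(a_i)\bigr),
\end{equation*}
which is valid as an identity of real numbers since $\nu$ is proper (so $\nu(a_i) \neq -\infty$). Because $i \geq 0$ and $1 - \lambda \geq 0$, the first summand is non-negative, and the standing bound makes the second summand at least $\lambda c$. Taking the infimum over $i$ yields $\gamma_{\varepsilon}(\alpha) \geq \lambda c$, with the indices satisfying $\nu(a_i) = \infty$ contributing $+\infty$ and thus not affecting the infimum.

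Given $\delta > 0$, I will then pick $\lambda$ small enough that $\lambda c > -\delta$: if $c \geq 0$, any $\lambda \in (0,1]$ suffices, while if $c < 0$ it is enough to take any $\lambda < \delta/|c|$. Setting $\varepsilon = \lambda\,\varepsilon_0 > 0$ delivers $\gamma_{\varepsilon}(\alpha) \geq \lambda c > -\delta$. There is no real obstacle here; the corollary is essentially a continuity statement saying that the penalty term $\varepsilon\,p^{-i}\,\nu(a_i)$ can be made arbitrarily uniformly small by shrinking $\varepsilon$. The only point requiring a moment's care is confirming that the convex-combination identity behaves sensibly for $i$ with $\nu(a_i) = \infty$, which is immediate from $\nu$ being proper.
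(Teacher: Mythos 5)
Your proof is correct and is essentially the paper's argument made algebraic: the paper takes a supporting line of slope $-\varepsilon_0$ under the Newton polygon and rotates it about its $x$-axis intercept to flatten the slope, and your convex-combination identity $i + \lambda\varepsilon_0 p^{-i}\nu(a_i) = (1-\lambda)i + \lambda\bigl(i + \varepsilon_0 p^{-i}\nu(a_i)\bigr)$ is precisely that rotation, since the new line $y = -\lambda\varepsilon_0 x + \lambda c$ meets the $x$-axis at the same point $x = c/\varepsilon_0$. The details (non-negativity of $(1-\lambda)i$, the choice $\lambda < \delta/|c|$ when $c<0$, and the harmless treatment of indices with $\nu(a_i)=\infty$) are all handled correctly.
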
 
\begin{proof} Take some negative line of slope $-\tau$ below
the Newton polygon of $\alpha$.  If this line does not meet the
negative $x$-axis we conclude that $\gamma_{\tau}(\alpha) \geq 0$.  In
the other case we rotate the line around the intersection point to
obtain the desired slope $-\varepsilon$. \end{proof}

\smallskip

We will from now on assume that the pseudovaluation $\nu $ on $A$ is
negative. By proposition \ref{A10p} this is a subring. Then we have:
\begin{displaymath}
W^{\delta}(A) \subset W^{\varepsilon}(A), \quad \mbox{if} \; \delta > \varepsilon.
\end{displaymath}

 The ring $W^{\dagger}(A)$ does not change if we replace
$\nu$ by a linearly equivalent pseudovaluation.
More generally let $f: A \rightarrow \mathbb{R} \cup
\{\infty \}$ be any function which is linearly equivalent to $\nu$.
Then a Witt vector $(x_0, x_1, \ldots ) \in W(A)$ is overconvergent
with respect to the $\nu$, iff there is an $\varepsilon > 0$ and a
constant $C \in \mathbb{R}$ such that for all $i\geq 0$.
\begin{displaymath}
   i + p^{i}f(x_i)\varepsilon \geq -C.
\end{displaymath} 

With the notation of Definition \ref{A1d} let $A$ be a finitely
generated algebra over $(R,\mu)$. Any admissible
pseudovaluation on $A$ leads to the same ring $W^{\dagger}(A)$. 
Let $\alpha : A \rightarrow B$ be a homomorphism of finitely generated
$R$-algebras. Then the induced homomorphism on the rings of Witt
vectors respects overconvergent Witt vectors:
\begin{equation}\label{A15e}
   W(\alpha): W^{\dagger}(A) \rightarrow W^{\dagger}(B).
\end{equation}
This is seen by choosing a diagram
\begin{displaymath}
   \xymatrix{
R[T_1, \ldots, T_n] \ar[r] \ar[d] & R[T_1, \ldots, T_n, S_1, \ldots,
S_m] \ar[d]\\
A \ar[r] & B.\\
}
\end{displaymath}

On the truncated Witt vectors we consider the functions $\gamma_{\varepsilon}[n]$:
\begin{displaymath}
\gamma_{\varepsilon}[n]: W_{n+1}(A) \rightarrow \mathbb{R} \cup \{ \infty \}
\end{displaymath} 
\begin{displaymath}
\gamma_{\varepsilon}[n](\alpha) = \min \{i + \varepsilon p^{-i}\nu(a_i) \;|\; i\leq n \}.
\end{displaymath}
This is the quotient of $\gamma_{\varepsilon}$ under the natural map
$W(A) \rightarrow W_{n+1} (A)$ in the sense of (\ref{A19e}). We
conclude that $\gamma_{\varepsilon}[n]$ is a proper pseudovaluation.

The following is obvious:
Let $\sum_{m=0}^{\infty} \alpha_m$ be an infinite sum of Witt vectors $\alpha_m \in W(A)$,
which converges in the $V$-adic topology to $\sigma \in A$.
 Let $\varepsilon > 0$ and $C \in \mathbb{R}$, such that
\begin{displaymath}
\gamma_{\varepsilon} (\alpha_m) \geq C. 
\end{displaymath}
Then $\sigma$ is overconvergent, and we have $\gamma_{\varepsilon} (\sigma) \geq C$.

More generally we can consider families of pseudovaluations $\delta_{\varepsilon}[n]$ of $W(A)$
 which are indexed by real numbers $\varepsilon >0$ and $n \in \mathbb{N}
 \cup \{\infty\}$. We write $\delta_{\varepsilon} = \delta_{\varepsilon}
 [\infty]$. We require that  
\begin{displaymath}
\begin{array}{lcll}
\delta_{\varepsilon_1}[n] & \geq & \delta_{\varepsilon_2}[n] & \quad \varepsilon_1
\leq \varepsilon_2.\\
\delta_{\varepsilon}[n] & \geq & \delta_{\varepsilon_2}[m] & \quad n \leq m.
\end{array}
\end{displaymath}

\begin{df}
Two families $\delta_{\varepsilon}[n]$ and $\delta'_{\varepsilon}[n]$ as above are called equivalent,
if there are constants $c_1, c_2, d_1, d_2 \in \mathbb{R}$, where $c_1 > 0, c_2 > 0$, such
that for sufficiently small $\varepsilon$ the following inequalities hold:
\begin{displaymath}
\begin{array}{ccc}
\delta_{c_1\varepsilon}[n] & \geq & \delta'_{\varepsilon}[n] - d_1\\
\delta'_{c_2\varepsilon}[n] & \geq & \delta_{\varepsilon}[n] - d_2.
\end{array}
\end{displaymath} 
\end{df}

Let $\nu$ and $\nu'$ be negative pseudovaluations on $A$, which are
linearly equivalent. Then the families $\gamma_{\varepsilon}$ and
$\gamma'_{\varepsilon}$ of Gauss norms defined by (\ref{A8e}) are
equivalent.

We obtain from Lemma \ref{A3l}.
\begin{prop}\label{A3p}
Let $(R,\mu)$ be a ring with a negative pseudovaluation. Let $A$ be an
$R$-algebra which is free as an $R$-module. Let $\tau$ an admissible 
pseudovaluation on $A$ given by Proposition \ref{A1p}.

We transport $\mu^n$ to
$A$ by an isomorphism $R^n \cong A$. Then a Witt vector $(a_0, a_1,
\dots ) \in W(A)$ is overconvergent with respect to $\tau$, iff there is
an $\varepsilon > 0$ and a constant $C \in \mathbb{R}$ such that:
\begin{displaymath}
   i + p^{-i}\mu^n(a_i) \geq -C.
\end{displaymath}
In particular a Witt vector $\underline{r} = (r_0, r_1, \ldots ) \in W(R)$ is
overconvergent iff its image in $W(A)$ is overconvergent. 
\end{prop}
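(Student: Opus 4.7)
The plan is to derive Proposition \ref{A3p} from Lemma \ref{A3l} together with the remark (immediately above the proposition) that overconvergence of a Witt vector is invariant under replacing the underlying function on $A$ by a linearly equivalent one. By Lemma \ref{A3l}, under the chosen isomorphism $R^n \cong A$, the order function $\mu^n$ is linearly equivalent to the admissible pseudovaluation $\tau$.

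For the main claim, I would fix constants $c_1, c_2 > 0$ and $d_1, d_2 \geq 0$ realizing this linear equivalence. Multiplying the inequality $\mu^n(a) \geq c_1 \tau(a) - d_1$ by $\varepsilon p^{-i} > 0$ and using $p^{-i} \leq 1$ yields, for each $i$,
\[
i + \varepsilon p^{-i}\mu^n(a_i) \;\geq\; i + c_1\varepsilon p^{-i}\tau(a_i) - \varepsilon d_1.
\]
A $\tau$-overconvergence bound with radius $c_1\varepsilon$ then translates, after adjusting the additive constant, into a $\mu^n$-overconvergence bound with radius $\varepsilon$. The reverse implication is symmetric, using $\tau(a) \geq c_2 \mu^n(a) - d_2$.

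For the ``in particular'' statement, the direction from $W(R)$ to $W(A)$ is an instance of the functoriality (\ref{A15e}) applied to $R \to A$. For the converse, combined with the main part applied to the image Witt vector, it suffices to show that $\mu$ on $R$ and the restriction of $\mu^n$ to $R \cdot 1_A \subset A$ are linearly equivalent. Writing $1_A = \sum_i c_i e_i$ in the chosen basis, the element $r \cdot 1_A$ has coordinates $(rc_1, \ldots, rc_n)$, so $\mu^n(r \cdot 1_A) \geq \mu(r) + \min_i \mu(c_i)$. For the reverse inequality, I would expand $1_A \cdot e_1 = e_1$ via the structure constants $e_i e_j = \sum_k c_{ij}^{(k)} e_k$ to extract elements $\alpha_i := c_{i1}^{(1)} \in R$ satisfying $\sum_i c_i \alpha_i = 1$; then $r = \sum_i (rc_i)\alpha_i$ yields $\mu(r) \geq \mu^n(r \cdot 1_A) + \min_i \mu(\alpha_i)$ by the pseudovaluation properties of $\mu$.

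The only step requiring any delicacy is extracting the unimodular relation $\sum_i c_i \alpha_i = 1$, which encodes the fact that $1_A$ is the multiplicative identity of $A$; the remaining work is a routine rescaling of Gauss norms, and I do not anticipate a deeper obstacle.
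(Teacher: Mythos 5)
Your proof is correct and follows the paper's own strategy: the main equivalence is Lemma \ref{A3l} (linear equivalence of $\tau$ and $\mu^n$) combined with the stated invariance of overconvergence under linear equivalence, and for the ``in particular'' converse you exhibit a unimodular relation $\sum_i c_i\alpha_i = 1$ among the coordinates $c_i$ of $1_A$, from which $\mu(r_i) \geq \min_m \mu(c_m r_i) - C'$ follows by the pseudovaluation axioms. The one genuine difference lies in how that relation is produced: the paper invokes Cohen--Seidenberg to argue that the $c_m$ generate the unit ideal (surjectivity of $\Spec A \to \Spec R$), whereas you read off explicit coefficients $\alpha_i = c_{i1}^{(1)}$ from the structure constants via $1_A \cdot e_1 = e_1$. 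Your version is more elementary and self-contained, avoids the detour through going-up, and makes the constant $C' = -\min_i\mu(\alpha_i)$ explicit; after that step the two arguments coincide.
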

\begin{proof} Only the last sentence needs a justification. Assume
$\underline{r}$ is overconvergent in $A$. By the first part of the
Proposition this means the following:

Let $e_i$ be a basis of the $R$-module $A$. we write:
\begin{displaymath}
   1 = \sum_{m} c_m e_m, \quad c_m \in R.
\end{displaymath}
Then overconvergence means that there are constants $\varepsilon >0$ and
$C \in \mathbb{R}$, such that for $1 \leq m \leq n$ and $i \geq 0$
\begin{displaymath}
   i + p^{-i}\mu(c_mr_i)\varepsilon \geq C.
\end{displaymath}
By Cohen-Seidenberg it is clear that $c_m$ generate the unit ideal in
$R$:
\begin{displaymath}
   1 = \sum_m c_m u_m.
\end{displaymath}
This gives
\begin{displaymath}
   \mu(r_i) \geq \min \{\mu(c_m r_i) + \mu(u_i)  \} \geq \min
   \{\mu(c_m r_i) \} - C'.
\end{displaymath}
for some constant $C'$, which depends only on the elements $u_m$.
Therefore we see that $\underline{r} \in W^{\dagger}(R)$. 
We leave the inclusion $W^{\dagger}(R) \subset W^{\dagger}(A)$ to the
reader. \end{proof}

\begin{lemma}\label{LOK5l}
Assume that $f \in A$ is localizing. Let $\underline{c} \in W(A)$ be a Witt
vector such that $\underline{c} \in W^{\dagger}(A_f)$. Then
$\underline{c} \in W^{\dagger} (A)$. 
\end{lemma}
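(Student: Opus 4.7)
The plan is to use the auxiliary function $\tau$ from Lemma \ref{LOK7l} together with the localizing property of $f$ to transfer the overconvergence estimate for $\underline{c}$ from $A_f$ back to $A$. Unpacking the hypothesis: there exist $\varepsilon>0$ and $C_0\in\mathbb{R}$ such that $\nu'(c_i)\geq -(C_0+i)p^i/\varepsilon$ for all $i\geq 0$, and the goal is to produce the analogous bound for $\nu(c_i)$ with some possibly smaller $\varepsilon'>0$.

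The crucial step will be to establish the upper bound $\tau(c_i) \leq C\nu(c_i)$, where $C$ is a localizing constant for $f$. The point is that because $c_i\in A$ and $f$ is a non-zero divisor, every representation of $c_i$ as $a/f^m$ with $a\in A$ and $m\geq 0$ forces $a = c_if^m\in A$, so that the localizing inequality $\nu(c_if^m)\leq C\nu(c_i)+mD$ applies directly to the numerator. Using the standing convention $d\geq D$ from the remark preceding Proposition \ref{A6p}, each quantity $\nu(c_if^m) - md$ is then bounded above by $C\nu(c_i) + m(D-d) \leq C\nu(c_i)$, and so is the supremum $\tau(c_i)$. Combining with the lower bound $\tau(c_i) \geq Q\nu'(c_i)$ of Lemma \ref{LOK7l} (for a constant $Q>0$) will yield
\[
\nu(c_i) \;\geq\; \tfrac{Q}{C}\,\nu'(c_i) \;\geq\; -\tfrac{Q}{C\varepsilon}\,(C_0+i)\,p^i.
\]

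From here only a calculation with constants remains: setting $\varepsilon' = C\varepsilon/(2Q)$, one checks
\[
i + \varepsilon'\,p^{-i}\nu(c_i) \;\geq\; \tfrac{i}{2} - \tfrac{C_0}{2} \;\geq\; -C_0/2
\]
for all $i\geq 0$, hence $\underline{c}\in W^{\varepsilon'}(A)\subset W^{\dagger}(A)$.

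The main potential obstacle is the bound $\tau(c_i)\leq C\nu(c_i)$: in general the reduced form of an element of $A_f$ can involve negative $f$-exponents (making $\tau$ much larger than $\nu$), but this cannot happen when $c_i$ already lies in $A$ and the $f$-exponent in the denominator is required to be non-negative, and the localizing hypothesis then applies directly. Once this observation is in hand, the rest of the argument is routine book-keeping of constants.
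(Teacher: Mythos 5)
Your argument is correct and follows essentially the same route as the paper: both use Lemma \ref{LOK7l} to pass from $\nu'$ to the auxiliary function $\tau$, and then use the localizing inequality together with the standing convention $d\geq D$ to control $\nu(c_if^m)-md$ by $C\nu(c_i)$. The only cosmetic difference is that you package the localizing step as an upper bound $\tau(c_i)\leq C\nu(c_i)$ on all representations at once, whereas the paper applies the localizing inequality to a single near-optimal representation $c_i=a_i/f^{m_i}$; the underlying estimate is identical.
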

\begin{proof}
We write $\underline{c} = (c_0, c_1, c_2, \ldots )$, where $c_i \in A$. 
By Lemma \ref{LOK7l} we find representations $c_i = a_i/f^{m_i}$, and 
real numbers $\varepsilon > 0$ and $U$, such that 
\begin{displaymath}
   i + p^{-i}\varepsilon (\nu (a_i) - m_i d) \geq -U.
\end{displaymath}
Since $f$ is localizing we find:
\begin{displaymath}
   \nu (a_i) = \nu (f^{m_i} c_i) \leq C \nu (c_i) + m_i D,
\end{displaymath}
and therefore
\begin{displaymath}
   -U \leq i + p^{-i}\varepsilon (C \nu(c_i) + m_i(D-d)) = i +
   p^{-i}\varepsilon C \nu (c_i) + p^{-i}\varepsilon m_i (D-d).
\end{displaymath} 
By our choice $D < d$ the last summand is not positive. This shows 
that $\underline{c} \in W^{\dagger}(A)$.
\end{proof}

\begin{prop}\label{LOK17p}
Let $(A,\nu)$ be an integral domain with a negative pseudovaluation, such
that any non-zero element is localizing. Let $\alpha : A \rightarrow B$
be an injective ring homomorphism of finite type, which is generically 
finite. Then we have:
\begin{displaymath}
   W(A) \cap W^{\dagger}(B) = W^{\dagger}(A).
\end{displaymath} 
\end{prop}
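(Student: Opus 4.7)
The inclusion $W^{\dagger}(A)\subset W(A)\cap W^{\dagger}(B)$ is immediate from the functoriality (\ref{A15e}) of $W^{\dagger}$ under maps of finitely generated algebras. For the reverse inclusion, fix $\underline{c}=(c_0,c_1,\dots)\in W(A)\cap W^{\dagger}(B)$, and let $K$ denote the fraction field of $A$. Generic finiteness means $B\otimes_A K$ is a finite $K$-algebra; picking a $K$-basis $b_1,\dots,b_n\in B$, writing $B=A[y_1,\dots,y_r]$, and clearing denominators in the $K$-linear expansions of the $y_j$ in the $b_i$, one finds a non-zero $f\in A$ such that $b_1,\dots,b_n$ form an $A_f$-basis of $B_f:=A_f\otimes_A B$. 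By hypothesis every non-zero element of $A$ is localizing, in particular $f$ is; equip $A_f$ with the induced negative pseudovaluation $\nu'$ from Example 2, whose restriction to $A$ is linearly equivalent to $\nu$ by Proposition \ref{A6p}.

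Since $B_f$ is finite free over $A_f$, Proposition \ref{A3p} applies to $(A_f,\nu')$ and the $A_f$-algebra $B_f$: a Witt vector in $W(A_f)$ lies in $W^{\dagger}(A_f)$ if and only if its image in $W(B_f)$ is overconvergent with respect to an admissible pseudovaluation $\tau_2$ on $B_f$ as an $A_f$-algebra. On the other hand, $B_f$ is also finitely generated over $A$ (adjoin the generator $1/f$), so the $A$-algebra map $B\to B_f$ together with (\ref{A15e}) yields $\underline{c}\in W^{\dagger}(B_f)$ with respect to an admissible pseudovaluation $\tau_1$ on $B_f$ viewed as an $A$-algebra. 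The plan is then to show $\tau_1$ and $\tau_2$ are linearly equivalent, conclude via Proposition \ref{A3p} that $\underline{c}\in W^{\dagger}(A_f)$, and finally invoke Lemma \ref{LOK5l} (using that $f$ is localizing in $A$) to obtain $\underline{c}\in W^{\dagger}(A)$.

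The main obstacle is the linear equivalence $\tau_1\sim\tau_2$. Both are quotients of degree valuations from polynomial rings mapping onto $B_f$: $\tau_1$ from the surjection $A[T_1,\dots,T_r,S]\twoheadrightarrow B_f$ with $S\mapsto 1/f$, and $\tau_2$ from $A_f[T_1,\dots,T_r]\twoheadrightarrow B_f$. Factoring the first through the presentation $A[T_1,\dots,T_r,S]/(fS-1)=A_f[T_1,\dots,T_r]$ reduces the claim to a comparison on $A_f[T_1,\dots,T_r]$ of two pseudovaluations: the degree valuation built directly from $\nu'$, and the pseudovaluation obtained by localizing the degree valuation on $A[T_1,\dots,T_r]$ at $f$ in the sense of Example 2. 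This comparison is a polynomial-ring analog of Proposition \ref{A6p} and falls within the scope of the generalized form of Proposition \ref{A1p} mentioned in \cite{DLZ}; once it is in hand, the three-step strategy above goes through with no further surprises.
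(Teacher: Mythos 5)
Your proof follows the paper's route exactly: localize at a non-zero element $f\in A$ so that $B_f$ is finite free over $A_f$, deduce $\underline{c}\in W^{\dagger}(A_f)$ from Proposition~\ref{A3p}, and descend to $W^{\dagger}(A)$ via Lemma~\ref{LOK5l} using that $f$ is localizing. The paper's two-sentence proof silently relies on the compatibility of admissible pseudovaluations along $A\to A_f\to B_f$, which you correctly isolate as the $\tau_1\sim\tau_2$ comparison; this is indeed the one point needing justification, and it follows from the transitivity of admissibility noted after Definition~\ref{A1d} together with Proposition~\ref{A1p}, so your deferral to the \cite{DLZ} version is reasonable if slightly more than strictly necessary.
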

\begin{proof} Indeed, we find an element $c \in A$, $ c\neq 0$ such
that $A_c \rightarrow B_c$ is finite, and $B_c$ is a free $A_c$-module.
Clearly it suffices to show the Proposition if we replace $B$ by
$B_c$. We consider the maps $A \rightarrow A_c \rightarrow B_c$ and
apply the last Lemma and Proposition \ref{A3p}. 
\end{proof}

\begin{prop}\label{LOK18p}
Let $A \rightarrow B$ be a smooth morphism of finitely generated
algebras over a field $K$ of characteristic $p$. We endow them with 
admissible pseudovaluations.
Then we have
\begin{displaymath}
    W(A) \cap W^{\dagger}(B) = W^{\dagger}(A).
\end{displaymath}
\end{prop}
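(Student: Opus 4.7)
The inclusion $W^{\dagger}(A)\subseteq W(A)\cap W^{\dagger}(B)$ is immediate from the functoriality (\ref{A15e}); the content is the reverse inclusion. Given $\xi=(a_0,a_1,\ldots)\in W(A)\cap W^{\dagger}(B)$, I plan to combine a direct polynomial-ring computation with a Zariski-local reduction to the generically finite setting of Proposition \ref{LOK17p}.

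First I would handle the polynomial case $B=A[T_1,\ldots,T_d]$ directly. Using compatible presentations of $A$ and $B$ over $K$ which simply adjoin the variables $T_1,\ldots,T_d$, any polynomial representative of $a\in A\subseteq B$ in the presentation of $B$ can be specialized at $T_1=\cdots=T_d=0$ to a representative in the presentation of $A$ of no larger total degree. Hence the admissible pseudovaluation of $B$ restricts on $A$ to the admissible pseudovaluation of $A$, giving $W(A)\cap W^{\dagger}(A[T_1,\ldots,T_d])=W^{\dagger}(A)$.

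For the general smooth morphism I would invoke the local structure theorem to cover $\Spec B$ by finitely many basic opens $\Spec B_{g_j}$ on each of which $A\to B_{g_j}$ factors as
\begin{displaymath}
A\to A[T_1,\ldots,T_{d_j}]\to C_j\to B_{g_j},
\end{displaymath}
where $C_j=A[T_1,\ldots,T_{d_j}][S]/(f_j(S))$ is finite free (with $f_j$ monic) and $C_j\to B_{g_j}$ is a localization inverting $f_j'(S)$. Simultaneously I would use Corollary \ref{A5c} to cover $\Spec A$ by basic opens $\Spec A_{h_k}$ in which every non-zero element is localizing; by Corollary \ref{A13c} the same property ascends to each $A_{h_k}[T_1,\ldots,T_{d_j}]$. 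Now, working over each $A_{h_k}$ (where $\xi$ still lies in $W(A_{h_k})\cap W^{\dagger}(B_{h_k})$ by functoriality), one descends the overconvergence of $\xi$ along the following chain: Lemma \ref{LOK5l}, applied to the element $f_j'(S)$, which is localizing in $C_{j,h_k}$ by Proposition \ref{A5p}, passes from $W^{\dagger}(B_{g_j,h_k})$ to $W^{\dagger}(C_{j,h_k})$; Proposition \ref{LOK17p}, applied to the finite \'etale injection $A_{h_k}[T_1,\ldots,T_{d_j}]\hookrightarrow C_{j,h_k}$, passes further to $W^{\dagger}(A_{h_k}[T_1,\ldots,T_{d_j}])$; and the polynomial case above passes finally to $W^{\dagger}(A_{h_k})$.

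The main obstacle is the last gluing step: from $\xi\in W(A)$ together with $\xi\in W^{\dagger}(A_{h_k})$ for every $k$, deducing $\xi\in W^{\dagger}(A)$. The cleanest closing uses Lemma \ref{LOK5l}, which applies if each $h_k$ is itself localizing in $A$; I would therefore arrange the Kedlaya cover of $\Spec A$ so that this additional property holds, or, failing that, exploit Proposition \ref{A6p} in combination with $(h_1,\ldots,h_r)=A$ to compare the pseudovaluation on $A$ uniformly with the restrictions of the pseudovaluations on $A_{h_k}$, thereby transferring the estimates on $\nu_{A_{h_k}}(a_i)$ back to estimates on $\nu_A(a_i)$. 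The remaining steps are formal applications of previously established results.
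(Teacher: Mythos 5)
Your local chain of reductions is sound, and your factorization through a finite free algebra $C_j$ followed by a localization inverting $f_j'(S)$ does close the local problem using Proposition~\ref{A5p}, Lemma~\ref{LOK5l}, Proposition~\ref{LOK17p}, and the polynomial case. (The paper instead factors $A\to A[T_1,\ldots,T_d]\to B$ with the second arrow \'etale, then applies Proposition~\ref{LOK17p} directly, which is shorter but equivalent in spirit.) The genuine gap is exactly the one you flag at the end, and neither of your two proposed escapes actually works.

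Lemma~\ref{LOK5l} requires $h_k$ to be localizing in $A$ itself. Corollary~\ref{A5c} only supplies basic opens $\Spec A_{h_k}$ in which \emph{every nonzero element of $A_{h_k}$} is localizing; that says nothing about $h_k$ being localizing with respect to the pseudovaluation on $A$, and there is no reason to expect it. Your alternative route via Proposition~\ref{A6p} and $(h_1,\ldots,h_r)=A$ compares pseudovaluations, but the difficulty is not a comparison of norms: the coordinates $a_i$ of $\xi$ must be estimated in $\nu_A$ uniformly from estimates in the $\nu_{A_{h_k}}$, and a partition-of-unity identity does not produce such an estimate without an argument about denominators that is essentially the content you are trying to prove. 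The paper handles this step by citing the sheaf property of $W^{\dagger}$ in the Zariski topology (established in \cite{DLZ}), from which ``the question is local on $\Spec A$'' is immediate; without invoking that result (or reproving it), your gluing step is not complete. Everything else in your argument is correct.
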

\begin{proof} By \cite{DLZ} $W^{\dagger}$ is a sheaf in the Zariski-topology. 
Therefore the question is local on $\Spec A$. We therefore may assume
by Corollary \ref{A13c} that any non-zero element of $A$ is
localizing. Obviously the 
question  is local on $\Spec B$. By the definition of smooth we may 
therefore assume that the morphism factors
\begin{displaymath}
   A \rightarrow A[T_1, \ldots, T_d] \rightarrow B,
\end{displaymath}
where the last arrow is \'etale and in particular generically finite.
We show the Proposition for both arrows separately. 

We know by the remark after Definition \ref{A1d} that there is an
admissible pseudovaluation on $A[T_1, \ldots, T_d]$, whose
restriction to $A$ is an admissible pseudovaluation. This shows the
assertion for the first arrow.

For the second arrow we use Proposition \ref{LOK17p}.  It is enough to
show that any element in  $C = A[T_1, \ldots, T_d]$ is localizing. But
this is Corollary \ref{A13c}. 
\end{proof}

Let $R$ be an integral domain and endow it with the trivial
valuation. Consider on 
the polynomial ring $A = R[T_1, \ldots, T_d]$ a degree valuation 
$\nu$, such that $\nu(T_i) = -\delta_i < 0$. Let $\gamma_{\varepsilon}$
be the associated Gauss norms on $W(A)$. In the following we need the
dependence on $\delta$. Therefore we set:
\begin{displaymath}
   \gamma^{(\delta)} = \gamma_1, \quad \text{and then} \;  
\gamma^{(\varepsilon\delta)} = \gamma_{\varepsilon}.
\end{displaymath}

Let us denote by $[1,d]$ the set of natural numbers between $1$ and
$d$. A weight $k$ is a function $k: [1,d] \rightarrow \mathbb{Z}_{\geq
  0}[1/p]$. Its values are denoted by $k_i$. 
The denominator of $k$ is the smallest number $u$ such
that $p^uk$ takes values in $\mathbb{Z}$. We set $\delta(k) = k_1\delta_1 +
\ldots + k_d\delta_d$. We write $X_i = T_i$ for the Teichm\"uller
representative and we set $X^k = X_1^{k_1}\cdot \ldots \cdot
X_d^{k_d}$. 

 By \cite{LZ} any element $\alpha \in W(A)$ has a unique
expansion:
\begin{equation}\label{A12e}
\alpha = \sum_k \xi_k X^k, \quad \xi_k \in V^uW(R).
\end{equation} 
Here $u$ denotes the denominator of $k$. This series is
convergent in the $V$-adic topology, i.e. for a given $m \in
\mathbb{N}$ we have $\xi_k \in V^mW(R)$ for almost all $k$.  

For $\xi \in W(R)$ we define:
\begin{displaymath}
\ord_V \xi = \min \{m \;|\; \xi \in V^mW(R)\}.
\end{displaymath}
\begin{prop}
The Gauss norm of $\gamma^{(\delta)}$ is given by the following formula:
\begin{equation}
\gamma^{(\delta)}(\alpha) = \inf \{\ord_V \xi_k -  \delta(k) \}
\end{equation}
and the truncated Gauss norm is given by:
\begin{equation}\label{A17e}
\begin{array}{ll}
   \gamma^{(\delta)}[n](\alpha) & = \min \{\infty , \ord_V \xi_k -
   \delta(k) \;|\; \xi_k \notin V^{n+1}W(R) \}\\
&  = \min_{k} \{ \gamma^{(\delta)}[n](\xi_k X^k) \}. 
\end{array}
\end{equation}
\end{prop}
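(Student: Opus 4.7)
My plan is to prove the second equality in (\ref{A17e}), namely $\gamma^{(\delta)}[n](\alpha) = \min_k \gamma^{(\delta)}[n](\xi_k X^k)$; the first equality and the un-truncated formula for $\gamma^{(\delta)}$ then follow by passing to the limit, since $\gamma^{(\delta)}(\alpha) = \inf_n \gamma^{(\delta)}[n](\alpha)$ and the analogous identity holds for the right-hand side.

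The first step is the single-term computation. Writing $\xi_k = V^{u(k)}\eta_k$ with $u(k)$ the denominator of $k$, the product $\xi_k X^k = V^{u(k)}(\eta_k \cdot [T^{p^{u(k)}k}])$ has $j$-th Witt component $\eta_k^{(j-u(k))} T^{p^j k}$ for $j \geq u(k)$ and zero otherwise. Since $R$ carries the trivial valuation, $\nu(\eta_k^{(j-u(k))} T^{p^j k}) = -p^j\delta(k)$ whenever the coefficient is nonzero, so the quantity $j + p^{-j}\nu(\cdot)$ reduces to $j-\delta(k)$ at every such index and the minimum over $j\leq n$ is attained at $j=\ord_V\xi_k$. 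This yields $\gamma^{(\delta)}[n](\xi_kX^k) = \ord_V\xi_k - \delta(k)$ when $\ord_V\xi_k\leq n$, and $+\infty$ otherwise. The lower bound $\gamma^{(\delta)}[n](\alpha) \geq \min_k \gamma^{(\delta)}[n](\xi_kX^k)$ is then immediate from the pseudovaluation property of $\gamma^{(\delta)}[n]$ (Proposition \ref{A10p}), noting that in $W_{n+1}(A)$ only finitely many $\xi_kX^k$ are nonzero by $V$-adic convergence.

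The main obstacle is the reverse inequality. I would fix $k_0$ achieving the minimum $m = \ord_V\xi_{k_0} - \delta(k_0)$ among $k$ with $\ord_V\xi_k\leq n$, and set $u_0 = \ord_V\xi_{k_0}$. The goal is to show that the Witt component $a_{u_0}$ of $\alpha$ contains the monomial $T^{p^{u_0}k_0}$ with nonzero coefficient $\eta_{k_0}^{(0)}$; granting this, $\nu(a_{u_0}) \leq -p^{u_0}\delta(k_0)$ and hence $\gamma^{(\delta)}[n](\alpha) \leq u_0 - \delta(k_0) = m$. Since the Witt addition polynomials $S_{u_0}$ are isobaric of weight $p^{u_0}$, each monomial contributing to $a_{u_0}$ is either ``linear'' (involves a single factor at Witt level $u_0$), in which case its $T^{p^{u_0}k_0}$-coefficient equals $\eta_{k_0}^{(0)}$ because different $k_l$ produce different monomials, or else ``cross'' (all factors at levels $\leq u_0-1$).

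The delicate technical point is ruling out cross-term interference. Suppose a cross monomial in $a_{u_0}$ produces $T^{p^{u_0}k_0}$. Setting $w_{j,l} = p^{j-u_0}e_{j,l}$ gives $\sum w_{j,l} = 1$ and $k_0 = \sum w_{j,l} k_l$, hence $\delta(k_0) = \sum w_{j,l}\delta(k_l)$ by linearity of $\delta$. However, the non-vanishing of the substituted components forces $j\geq \ord_V\xi_{k_l} \geq m + \delta(k_l)$, while the cross-term hypothesis gives $j\leq u_0-1$. Together these imply $\delta(k_l) \leq u_0 - 1 - m$ for every contributing pair, so the weighted sum is at most $u_0 - 1 - m$, contradicting $\delta(k_0) = u_0 - m$. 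Hence no cross term interferes, the coefficient of $T^{p^{u_0}k_0}$ in $a_{u_0}$ is the nonzero element $\eta_{k_0}^{(0)}$, and the reverse inequality is established.
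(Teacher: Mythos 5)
Your proof is correct, but it takes a genuinely different route from the paper's. The paper establishes the single-term formula, observes that the ultrametric property of the order function $\gamma^{(\delta)}[n]$ immediately gives (\ref{A17e}) whenever the minimum on the right-hand side is attained for exactly one weight $k$, and then reduces the general case to this one by a perturbation argument: it constructs a sequence $\delta^{(l)}\to\delta$ such that each $\gamma^{(l)}[n]$ has a unique minimizing weight, and appeals to continuity of both sides of (\ref{A17e}) in $\delta$. Your argument instead handles the general case head-on by examining the structure of the Witt addition polynomial $S_{u_0}$: you split its monomials into the linear part (exactly one factor at level $u_0$) and the cross part (all factors at levels $\le u_0-1$), and use the isobaric weight $p^{u_0}$ of $S_{u_0}$ to show that any cross monomial producing $T^{p^{u_0}k_0}$ would express $k_0$ as a convex combination $\sum w_{j,l}k_l$ of weights satisfying $\delta(k_l)\le u_0-1-m$, forcing $\delta(k_0)\le u_0-1-m$ and contradicting $\delta(k_0)=u_0-m$. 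The paper's route is shorter and avoids inspecting the Witt polynomials, at the cost of a density/continuity argument; yours is longer but more structural, pinpointing exactly why cross terms cannot cancel the extremal monomial, and in particular it dispenses with the generic-position step. Both are sound; your convexity observation is the genuinely new ingredient not present in the paper's proof.
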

\begin{proof} It is enough to show the equation (\ref{A17e}). The
formula is obvious if $\alpha = \xi_k X^k$ for a particular $k$. 
This implies (\ref{A17e}) if the minimum is attained exactly once on
the right hand side. 

Let $\delta^{(l)} \in \mathbb{R}^d_{>0}$, $l \in \mathbb{N}$ be a
sequence which converges to 
the given $\delta$. We denote by $\gamma^{(l)}[n]$ the 
truncated Gauss norm on $W_{n+1} (A)$ associated to numbers
$\delta^{(l)}$. We easily see that
\begin{displaymath}
   \lim_{l\rightarrow \infty} \gamma^{(l)}[n](\alpha) = 
   \gamma^{(\delta)}[n] (\alpha).
\end{displaymath}
Clearly the right hand side of (\ref{A17e}) is also continuous with
respect to $\delta$. Therefore it suffices for the proof to construct
a sequence $\delta^{(l)}$ such that for each $l$ the minimum
\begin{displaymath}
   \min \{ \gamma^{(l)}[n](\xi_k X^k) \}
\end{displaymath}
is assumed exactly once. This is the case for $\alpha \neq 0$.
Indeed on the right hand side of (\ref{A17e}) all but finitely many 
$\gamma_{\varepsilon}[n](\xi_k X^k)$ are equal to $\infty$. We denote by
$g$ the smallest of these values and by $g_1$ the next greater value
which may be $\infty$. Let $T$ be the set of weights where the 
value $g$ is assumed. 

The set of linear functions $\eta: \mathbb{R}^d \rightarrow
\mathbb{R}$ such that
\begin{displaymath}
   \ord_V \xi_k + \eta (k) \neq \ord_V \xi_{k'} + \eta (k')
\end{displaymath}
for two different weights involved of $T$  is dense. We 
find an $\eta$ in this set whose matrix has positive entries.   
Moreover we may assume that $\eta(k) < (q_1 - q)/2$ if
$\gamma^{(\delta)}[n](\xi_k X^k) \neq \infty$. Then $\delta (l) = 
\delta + l^{-1}\eta$ meets our requirements. \end{proof}

\smallskip
 
{\bf Remark:} In the case of a polynomial algebra $A$ it is useful to consider a stronger version
of overconvergence, which makes only sense for rings of Witt
vectors. With the notations above we define: 
\begin{equation}\label{A14e}
\breve{\gamma}_{\varepsilon}(\alpha) = \inf \{\ord_V \xi_k - \varepsilon |k| -u \}.
\end{equation}
This is clearly a pseudovaluation for each $\varepsilon$. If this $\inf$
is not $-\infty$ we call $\alpha$ overconvergent with respect to
$\breve{\gamma}_{\varepsilon}$. One easily verifies:
\begin{equation}\label{A16e}
\begin{array}{rcl}
\breve{\gamma}_{\varepsilon}(\alpha) & \leq & \gamma_{\varepsilon} (\alpha)\\
\breve{\gamma}_{\varepsilon}(\alpha^r) & \geq & (r-1)\gamma_{\varepsilon}(\alpha) 
+ \breve{\gamma}_{\varepsilon}(\alpha).
\end{array}
\end{equation}
It is important to note that the Teichm\"uller representative $[f]$ of an element $f \in A$
is $\breve{\gamma}_{\varepsilon}$-overconvergent. This is an immediate
consequence of the following  
\begin{lemma}\label{A15l}
Let $R$ be a $\mathbb{Z}_p$-algebra. Let $A$ be an $R$-algebra. Let
$x_1, \ldots, x_n 
\in R$ and $t_1, \ldots, t_d \in A$ be elements. We denote by $k =
(k_1, \ldots, k_d) \in \mathbb{Z}_{\geq 0}[1/p]$ a weight. Then we
have in $W(A)$ the following relation:

\begin{equation}\label{GN11e0}
   [x_1t_1 + \ldots + x_dt_d] = \sum_{k, |k| = 1} \alpha_{k}
   [t_1]^{k_1}\cdot \ldots \cdot [t_d]^{k_d},
\end{equation}
where $\alpha_k \in V^uW(R)$ and $p^u$ is the denominator of $k$.
\end{lemma}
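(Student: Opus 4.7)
The plan is to reduce to a universal $p$-torsion-free setting and then combine the expansion formula (\ref{A12e}) with a direct comparison of ghost components.

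First I would set up the universal case: put $R_0 = \mathbb{Z}_p[x_1,\ldots,x_d]$ and $A_0 = R_0[t_1,\ldots,t_d]$, treating the $x_i$ and $t_i$ as independent variables. Both rings are $p$-torsion free, so the ghost map $W(A_0) \to \prod_{n \geq 0} A_0$ is injective and identities in $W(A_0)$ may be checked ghost by ghost. By functoriality of $W$, the Teichm\"uller lift, and Verschiebung, the general lemma reduces to this universal statement via the natural ring maps $R_0 \to R$ and $A_0 \to A$ sending formal variables to the given elements; the induced $W(R_0) \to W(R)$ carries $V^u W(R_0)$ into $V^u W(R)$. In the universal setting I would invoke (\ref{A12e}) to obtain a unique convergent expansion
\[
  [x_1 t_1 + \cdots + x_d t_d] \;=\; \sum_{k} \xi_k \, [t_1]^{k_1}\cdots [t_d]^{k_d}, \qquad \xi_k \in V^{u(k)} W(R_0),
\]
with $u(k)$ the denominator of $k$; the task is then to show $\xi_k = 0$ unless $|k|=1$.

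Writing $\xi_k = V^{u(k)}(\beta_k)$ with $\beta_k \in W(R_0)$ and using the convention $V^u(\beta) \cdot [t]^k := V^u\bigl(\beta \cdot [t^{p^u k}]\bigr)$ (well defined because $p^{u(k)} k \in \mathbb{Z}_{\geq 0}^d$), one has $\xi_k \cdot [t_1]^{k_1}\cdots[t_d]^{k_d} = V^{u(k)}\bigl(\beta_k \cdot [t^{p^{u(k)} k}]\bigr)$, whose $n$-th ghost component vanishes for $n < u(k)$ and equals $p^{u(k)} w_{n-u(k)}(\beta_k)\,t^{p^n k}$ for $n \geq u(k)$, by the standard identity $w_n(V^u \gamma) = p^u w_{n-u}(\gamma)$.

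The decisive comparison is with the $n$-th ghost component of the left hand side,
\[
  (x_1 t_1 + \cdots + x_d t_d)^{p^n} \;=\; \sum_{|j| = p^n} \binom{p^n}{j} x^j t^j,
\]
which is supported exclusively on $t$-monomials of total degree $p^n$. For distinct weights $k$ with $p^n k \in \mathbb{Z}_{\geq 0}^d$ the exponent vectors $p^n k$ are distinct, so coefficients may be matched termwise; if $|k|\neq 1$ then $|p^n k|=p^n|k|\neq p^n$, no such monomial occurs on the left, and hence $p^{u(k)} w_{n-u(k)}(\beta_k) = 0$ in the $p$-torsion free ring $R_0$ for every $n\geq u(k)$. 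This forces $w_m(\beta_k) = 0$ for all $m\geq 0$, whence $\beta_k = 0$ by injectivity of the ghost map, and finally $\xi_k = 0$. The existence of the surviving $\alpha_k = \xi_k$ for $|k|=1$ with the stated $V$-adic refinement is built into (\ref{A12e}), so no deeper obstacle arises; the only point requiring care is the correct interpretation of $[t]^k$ with fractional $k$ via the convention above, which makes the whole argument run cleanly.
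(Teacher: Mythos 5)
Your argument is correct, and it takes a somewhat different route than the paper. Both proofs reduce to a universal polynomial situation over $\mathbb{Z}_p$ and both rest ultimately on a homogeneity/degree observation, but the paper implements that observation by endowing $W(A)$ with a $\mathbb{Z}_{\geq 0}[1/p]$-grading (a Witt vector $(p_0,p_1,\ldots)$ is homogeneous of degree $m$ if each nonzero $p_i$ has degree $p^i m$), observing that $[t_1 + \cdots + t_d]$ is homogeneous of degree $1$, and then citing \cite{LZ} Prop.~2.3 to conclude. You instead take the expansion (\ref{A12e}), pass to ghost components, and match $t$-monomials of degree $p^n|k|$ on the right against the degree-$p^n$ expansion $(x_1t_1+\cdots+x_dt_d)^{p^n}$ on the left, using $p$-torsion-freeness of the universal base and injectivity of the ghost map to kill $\xi_k$ for $|k|\neq 1$. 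Your version is more self-contained since it re-derives the content of the cited graded-structure result from scratch via the ghost map, whereas the paper's proof is shorter by leaning on \cite{LZ}; you also keep the $x_i$ as free variables rather than first reducing to $x_i=1$, which is a harmless variation (the paper's reduction absorbs $[x]^{p^u k}\in W(R)$ into $\alpha_k$, which is implicit in the phrase ``clearly it is enough''). One small caveat worth stating explicitly is that your reading $V^u(\beta)\cdot[t]^k := V^u\bigl(\beta\,[t^{p^u k}]\bigr)$ is exactly the projection-formula interpretation under which (\ref{A12e}) is meant, so the two are consistent; with that noted, the proof is sound.
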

\begin{proof} Clearly it is enough to show this Lemma in the case, where
$x_1 = 1, \ldots, x_d = 1$. Moreover we may restrict to the case where
$R = \mathbb{Z}_p$ and $A$ is the polynomial algebra over
$\mathbb{Z}_p$ in the variables $t_1, \ldots, t_d$. Then $W(A)$ is a
$\mathbb{Z}_{\geq 0}[1/p]$-graded, such that the monomial
$[t_1]^{k_1}\cdot \ldots \cdot [t_d]^{k_d}$ has degree $|k|$ (note that
this monomial is in general not in $W(A)$.) More precisely a Witt
vector of polynomials $(p_0, p_1, p_2, \ldots ) \neq 0$ is homogeneous of
degree $m \in \mathbb{Z}_{\geq 0}[1/p]$ if each polynomial $p_i$ has
degree $p^i m$, if $p_i \neq 0$. In the case where $p^im $ is not an
integer the condition says that $p_i = 0$. 

Since $[t_1 + \ldots + t_d]$ is homogeneous of degree $1$ the Lemma
follows from \cite{LZ} Prop.2.3. 
\end{proof}

\smallskip

\begin{lemma}\label{A18l}
Let $(A,\nu)$ a ring with a proper pseudovaluation. Let $\alpha \in VW(A)$.
Assume that $\gamma_{\varepsilon} (\alpha) \geq 0$. Then the element $1 - \alpha$ is a unit in
$W(A)$ and we have
\begin{equation}\label{A18e}
\gamma_{\varepsilon}(1- \alpha)^{-1} \geq 0.
\end{equation}

Assume moreover that $A = R[T_1, \ldots, T_d]$ is a polynomial ring with a degree valuation.. Then
\begin{displaymath}
\breve{\gamma}_{\varepsilon}(1- \alpha)^{-1} \geq \min \{0, \breve{\gamma}_{\varepsilon}(\alpha)\}.
\end{displaymath}
In particular $(1- \alpha)^{-1} $ is
$\breve{\gamma}_{\varepsilon}$-overconvergent if $\alpha$ is. 
\end{lemma}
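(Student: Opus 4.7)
The plan is to invert $1-\alpha$ via the usual geometric series $(1-\alpha)^{-1} = \sum_{n\geq 0} \alpha^n$ and then push the Gauss-norm bounds through the sum. Since $\alpha \in VW(A)$, we have $\alpha^n \in V^n W(A)$, so the partial sums form a Cauchy sequence in the $V$-adic topology; as $W(A)$ is $V$-adically separated and complete this sequence converges, and telescoping $(1-\alpha)\sum_{n=0}^{N}\alpha^n = 1 - \alpha^{N+1}$ shows the limit is a two-sided inverse of $1-\alpha$. This establishes existence of $(1-\alpha)^{-1}$.

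For the first bound, Proposition \ref{A10p} gives $\gamma_{\varepsilon}(\alpha^n) \geq n\,\gamma_{\varepsilon}(\alpha) \geq 0$ for every $n\geq 0$ (the case $n=0$ coming from $\gamma_{\varepsilon}(1)=0$). The remark preceding the definition of the induced map (\ref{A15e}) says exactly that if an infinite sum of Witt vectors converges $V$-adically and each summand has $\gamma_{\varepsilon}\geq C$, then so does the limit. Applying this with $C=0$ to $\sum_n \alpha^n$ yields $\gamma_{\varepsilon}((1-\alpha)^{-1}) \geq 0$, which is (\ref{A18e}).

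For the second bound, specialize to $A = R[T_1,\ldots,T_d]$. For $n\geq 1$, the second inequality of (\ref{A16e}) gives
\begin{displaymath}
\breve{\gamma}_{\varepsilon}(\alpha^n) \;\geq\; (n-1)\gamma_{\varepsilon}(\alpha) + \breve{\gamma}_{\varepsilon}(\alpha) \;\geq\; \breve{\gamma}_{\varepsilon}(\alpha),
\end{displaymath}
since $\gamma_{\varepsilon}(\alpha)\geq 0$. For $n=0$ we have $\breve{\gamma}_{\varepsilon}(1)=0$. Therefore every term of the series satisfies $\breve{\gamma}_{\varepsilon}(\alpha^n) \geq \min\{0,\breve{\gamma}_{\varepsilon}(\alpha)\}$.

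The remaining step is an analogue for $\breve{\gamma}_{\varepsilon}$ of the $V$-adic convergence remark used above: if $\beta_m \to \beta$ in the $V$-adic topology and $\breve{\gamma}_{\varepsilon}(\beta_m) \geq C$ for all $m$, then $\breve{\gamma}_{\varepsilon}(\beta)\geq C$. This follows directly from the defining formula (\ref{A14e}), because the coefficient $\xi_k$ of $\beta$ in the expansion (\ref{A12e}) is the $V$-adically convergent sum of the corresponding coefficients $\xi_{k,m}$ of $\beta_m$, so $\ord_V \xi_k \geq \inf_m \ord_V \xi_{k,m}$. Applied to the partial sums of $\sum_n \alpha^n$, this yields $\breve{\gamma}_{\varepsilon}((1-\alpha)^{-1}) \geq \min\{0,\breve{\gamma}_{\varepsilon}(\alpha)\}$, and in particular overconvergence with respect to $\breve{\gamma}_{\varepsilon}$ is preserved. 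The only mild subtlety, and thus the main thing to verify carefully, is this convergence statement for $\breve{\gamma}_{\varepsilon}$; everything else is a direct combination of Proposition \ref{A10p} and the inequalities (\ref{A16e}).
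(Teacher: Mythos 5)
Your proof is correct and follows essentially the same route as the paper: expand $(1-\alpha)^{-1}$ as the geometric series $\sum_n \alpha^n$, establish $V$-adic convergence, push the $\gamma_{\varepsilon}$ bound through via the pseudovaluation property, and deduce the $\breve{\gamma}_{\varepsilon}$ bound from (\ref{A16e}). The only cosmetic difference is that the paper verifies $V$-adic convergence by rewriting the series in the closed form $1+\sum_{i>0}p^{i-1}\,{}^{V}(\eta^{i})$ with $\alpha={}^{V}\eta$, whereas you argue directly from $\alpha^{n}\in V^{n}W(A)$; your explicit remark on passing $\breve{\gamma}_{\varepsilon}$-bounds through $V$-adic limits fills in a step the paper dismisses as obvious.
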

 
\begin{proof} We write $\alpha = ~^V \eta.$ We find $\gamma_{\varepsilon/p} (\eta) > -1$.
We have in $W(A)$ the identity:
\begin{displaymath}
(1 - ~^V\eta)^{-1} = 1 + \sum_{i > 0} p^{i-1}~^V(\eta^{i}) = \sum_{i \geq 0} \alpha^i.
\end{displaymath}
The middle term shows that the series converges $V$-adically and the last sum proves the 
inequality (\ref{A18e}).
The last assertion is obvious from (\ref{A16e}).
\end{proof}

\begin{prop}\label{A18p}
Let $(A,\nu)$ a ring with a proper pseudovaluation. Let $\mathbf{w}_n: W(A) \rightarrow A$ denote 
the Witt polynomials. An element $\alpha \in W^{\dagger}(A)$ is a unit, iff $\mathbf{w}_0(\alpha)$ 
is a unit in $A$.

Assume moreover that $A = R[T_1, \ldots, T_d]$ is a polynomial ring
with a degree valuation. If $\alpha$ is
$\breve{\gamma}_{\varepsilon}$-overconvergent, then $\alpha^{-1}$ is
$\breve{\gamma}_{\delta}$-overconvergent for some $\delta >0$.
\end{prop}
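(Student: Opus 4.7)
The plan is to reduce both parts to Lemma \ref{A18l} by writing $\alpha = [a_0](1 + \alpha')$ with $\alpha' \in VW(A)$, and inverting $1+\alpha'$ via the geometric series.

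For Part 1, the direction ``$\alpha$ is a unit $\Rightarrow a_0 = \mathbf{w}_0(\alpha)$ is a unit'' is immediate since $\mathbf{w}_0$ is a ring homomorphism. For the converse, I would first note that the Teichm\"uller representative $[a]$ of any $a \in A$ lies in $W^{\dagger}(A)$ (its Newton polygon consists of the single point $(\nu(a),0)$), so in particular $[a_0]^{-1} = [a_0^{-1}] \in W^{\dagger}(A)$. Setting $\alpha' = [a_0^{-1}]\alpha - 1 \in W^{\dagger}(A)$, the identity $\mathbf{w}_0(\alpha') = 0$ places $\alpha'$ in $VW(A) \cap W^{\dagger}(A)$. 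The key technical step is then to show that by shrinking $\varepsilon$ we can arrange $\gamma_{\varepsilon}(\alpha') \geq 0$. Writing $\alpha' = {}^V\beta$ with $\beta \in W^{\dagger}(A)$ and applying Corollary \ref{A10c} to $\beta$ with $\delta = 1$ gives $\gamma_{\varepsilon}(\beta) > -1$ for some $\varepsilon$, and then by (\ref{A21e}) we have $\gamma_{p\varepsilon}(\alpha') = 1 + \gamma_{\varepsilon}(\beta) > 0$. With this bound, Lemma \ref{A18l} yields $(1+\alpha')^{-1} \in W^{\dagger}(A)$, whence $\alpha^{-1} = (1+\alpha')^{-1}[a_0^{-1}] \in W^{\dagger}(A)$.

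For Part 2, the same reduction applies. Since $A = R[T_1,\ldots,T_d]$ is a polynomial ring over the integral domain $R$, any unit $a_0 \in A^{\times}$ lies in $R^{\times}$, so $[a_0^{\pm 1}] \in W(R) \subset W(A)$ has only a weight-zero term in the expansion (\ref{A12e}) and hence satisfies $\breve{\gamma}_{\varepsilon}([a_0^{\pm 1}]) = 0$. Using that $\breve{\gamma}_{\varepsilon}$ is a pseudovaluation (noted in the Remark preceding Lemma \ref{A15l}), the element $\alpha' = [a_0^{-1}]\alpha - 1$ inherits $\breve{\gamma}_{\varepsilon}$-overconvergence from $\alpha$. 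Since $\breve{\gamma}_{\delta}$ is monotone in $\delta$, we can further shrink to $\delta \leq \varepsilon$ so that $\gamma_{\delta}(\alpha') \geq 0$ (by the argument of Part 1) while keeping $\alpha'$ $\breve{\gamma}_{\delta}$-overconvergent. The second assertion of Lemma \ref{A18l} then gives that $\breve{\gamma}_{\delta}((1+\alpha')^{-1})$ is finite, and multiplication by $[a_0^{-1}]$ preserves this, yielding the desired $\breve{\gamma}_{\delta}$-overconvergence of $\alpha^{-1}$.

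The main obstacle is the technical step of pushing $\gamma_{\varepsilon}(\alpha')$ up to be nonnegative, which crucially uses $\alpha' \in VW(A)$; without this hypothesis, Corollary \ref{A10c} alone only yields $\gamma_{\varepsilon}(\alpha') > -\delta$ for arbitrary $\delta > 0$, not the threshold $\geq 0$ required by Lemma \ref{A18l}. The passage from $\beta$ to ${}^V\beta$ via (\ref{A21e}) contributes exactly the missing ``$+1$''.
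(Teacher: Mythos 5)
Your proposal is correct and follows essentially the same route as the paper: write $\alpha = [a_0](1 + \alpha')$ with $\alpha' \in VW(A)$, shrink $\varepsilon$ so that Corollary \ref{A10c} combined with (\ref{A21e}) pushes $\gamma_{\varepsilon}(\alpha') \geq 0$, and then invoke Lemma \ref{A18l}; you merely make explicit the ``$+1$'' shift from $\gamma_{\varepsilon/p}(\beta) > -1$ to $\gamma_{\varepsilon}({}^V\beta) > 0$ where the paper is terse, and in Part~2 you observe $a_0 \in R^{\times}$ whereas the paper simply uses that all Teichm\"uller representatives are $\breve{\gamma}_{\varepsilon}$-overconvergent.
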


\begin{proof} We write $\alpha = [a] + ~^V\eta$, with $a \in A$ and
$\eta \in W(A)$. To prove the first assertion we may assume that $a 
=1$. Applying Corollary \ref{A10c} we assume that $\gamma_{\varepsilon}
(~V\eta) >0$. Then the assertion follows from  Lemma \ref{A18l}.

Now we prove the second assertion:  Since
every Teichm\"uller representative is
$\breve{\gamma}_{\varepsilon}$-overconvergent, it suffices to show that
the inverse of $1 + [a^{-1}] ~^V\eta = 1 + ~^V ([a^{-p}]\eta)$ is
$\breve{\gamma}_{\varepsilon}$-overconvergent. Since
$\breve{\gamma}_{\varepsilon}$ is a pseudovaluation we see that $~^V
([a^{-p}]\eta)$ is $\breve{\gamma}_{\varepsilon}$-overconvergent too. By
Corollary \ref{A10c} we find $\varepsilon/p$ such that
\begin{displaymath}
  \gamma_{\varepsilon/p} ([a^{-p}]\eta) > -1.
\end{displaymath}
Therefore we may apply the Lemma \ref{A18l}. 
\end{proof}

\begin{prop}
Let $A$ be an algebra over a perfect field $K$. Let $\nu$ be an
admissible pseudovaluation on $A$. Then $W^{\dagger}(A)$ is an algebra
over the complete local ring $W(K)$.  

The $W(K)$-algebra $W^{\dagger}(A)$ is weakly complete in the sense
of \cite{MW}.  
\end{prop}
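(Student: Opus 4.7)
The proposition has two parts. The first part, that $W^{\dagger}(A)$ carries a $W(K)$-algebra structure, I would establish by showing $W(K) \subseteq W^{\dagger}(A)$. The restriction of any admissible pseudovaluation $\nu$ along $K \hookrightarrow A$ is a negative pseudovaluation on the field $K$, and is therefore linearly equivalent to the trivial valuation: every nonzero $c \in K$ is a unit, and $\nu(1) = 0$ forces $\nu(c) = 0$ up to equivalence. Thus for $\alpha = (c_0, c_1, \ldots) \in W(K) \subset W(A)$, the Gauss norm becomes $\gamma_{\varepsilon}(\alpha) = \min\{i : c_i \neq 0\} \geq 0$ for every $\varepsilon > 0$, placing $W(K)$ inside every $W^{\varepsilon}(A) \subseteq W^{\dagger}(A)$.

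For the second part, I would verify the standard equivalent formulation of weak completeness in the sense of \cite{MW}: that $W^{\dagger}(A)$ is $p$-adically separated and satisfies the substitution property---for any $b_1, \ldots, b_n \in W^{\dagger}(A)$ and any overconvergent dagger power series $f = \sum_I c_I T^I \in W(K)\{T_1, \ldots, T_n\}^{\dagger}$ (with $v_p(c_I) \geq \varepsilon_0 |I| - C_0$ for some $\varepsilon_0 > 0$), the substitution $f(b_1, \ldots, b_n) = \sum_I c_I b_1^{I_1} \cdots b_n^{I_n}$ converges in $W^{\dagger}(A)$. The $p$-adic separation is immediate: $p = V(1)$ in $W(A)$ because $pA = 0$, so $p^m W(A) \subseteq V^m W(A)$, and $W(A)$ is $V$-adically Hausdorff.

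For the substitution property, I would apply Corollary \ref{A10c} iteratively: given $\delta$ with $0 < \delta < \varepsilon_0$, there exists a single $\varepsilon > 0$ such that $\gamma_{\varepsilon}(b_i) \geq -\delta$ for every $i$. Since $\gamma_{\varepsilon}$ is a pseudovaluation on $W(A)$ by Proposition \ref{A10p}, and $\gamma_{\varepsilon}(c) = v_p(c)$ for $c \in W(K)$, the general term of the series is bounded below by
\[
\gamma_{\varepsilon}\bigl(c_I b_1^{I_1} \cdots b_n^{I_n}\bigr) \;\geq\; v_p(c_I) + \sum_i I_i\, \gamma_{\varepsilon}(b_i) \;\geq\; (\varepsilon_0 - \delta)|I| - C_0,
\]
which tends to $+\infty$ with $|I|$. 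Hence the partial sums are $\gamma_{\varepsilon}$-Cauchy (and therefore $V$-adically Cauchy in $W(A)$), converging in $W(A)$ to some $\sigma$ with $\gamma_{\varepsilon}(\sigma) \geq -C_0$, i.e., $\sigma \in W^{\varepsilon}(A) \subseteq W^{\dagger}(A)$.

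The main obstacle is matching the formulation of weak completeness: the naive attempt to realize $W^{\dagger}(A)$ as a literal surjective image of $W(K)\{T_1, \ldots, T_d\}^{\dagger}$ under $T_i \mapsto [a_i]$ for $K$-algebra generators $a_i$ of $A$ fails to account directly for fractional-weight terms like $V[a_i]$ that appear in the expansion (\ref{A12e}) of a general overconvergent Witt vector. The convergence-based criterion above circumvents this by working directly with the Gauss norms already developed, and together with $p$-adic separation characterizes weakly complete $W(K)$-algebras as in \cite{MW}.
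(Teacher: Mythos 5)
Your proposal is correct and takes essentially the same approach as the paper: both verify the substitution property by choosing a common radius $\varepsilon$ with $\gamma_{\varepsilon}(z_i) \geq -\delta$ (your Corollary \ref{A10c}, the paper's ``making $\varepsilon$ smaller''), and then use the pseudovaluation inequality $\gamma_{\varepsilon}(\xi\eta) \geq \gamma_{\varepsilon}(\xi)+\gamma_{\varepsilon}(\eta)$ from Proposition \ref{A10p} to bound $\gamma_{\varepsilon}$ of each term $c_I b^I$ uniformly from below, concluding that the $V$-adic limit lies in $W^{\varepsilon}(A)$. The extra observations you supply (that $W(K)\subseteq W^{\dagger}(A)$ because $\nu$ restricts to the trivial valuation on $K$, and that $p$-adic separation follows from $p\in VW(A)$) are left implicit in the paper but are correct, and your remark about why one cannot naively present $W^{\dagger}(A)$ as an image of a dagger power series ring is an accurate explanation of why the direct convergence argument is the right one.
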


\begin{proof} 
Let $z_1, \ldots, z_r \in W^{\dagger}(A)$. Consider an infinite series 
\begin{equation}\label{A20e}
\sum a_k z^k, \quad a_k \in W(K), \quad z^k = z_1^{k_1}\cdot \ldots \cdot z_r^{k_r}.
\end{equation}
We assume that there are real numbers $\delta > 0$, and  $c$, such that
\begin{displaymath}
\ord_p a_k \geq \delta |k| + c.
\end{displaymath}
This implies that the series (\ref{A20e}) converges in $W(A)$. We have to show
that the series converges to an element $W^{\dagger}(A)$.
We choose a common radius $\varepsilon$ of convergence for $z_1, \ldots, z_r$. Making $\varepsilon$ smaller
we may assume that:
\begin{displaymath}
\gamma_{\varepsilon}(z_i) \geq -\delta.
\end{displaymath} 
Then we find:
\begin{displaymath}
\gamma_{\varepsilon}(a_k a^k) \geq \ord_p a_k - \delta |k| \geq c.
\end{displaymath}
Therefore (\ref{A20e}) converges to an element of $W^{\dagger}(A)$.
\end{proof}

We will point out that by Monsky and Washnitzer the last proposition
implies Hensel's Lemma for the overconvergent Witt vectors:

\begin{prop}\label{Hensel}
Let $A$ be an algebra over a perfect field $K$. Let $\nu$ be an
admissible pseudovaluation on $A$. Let $f(T) \in W^{\dagger}(A)[T]$
be a polynomial. We consider the homomorphism $\mathbf{w}_0 :
W^{\dagger}(A) \rightarrow  A.$

Let $a \in A$ be an element, such that 
\begin{displaymath}
   f(a) = 0 \quad \text{and} \; f'(a) \; \text{is a unit in} \; A.
\end{displaymath} 
Then there is a unique $\alpha \in W^{\dagger}(A)$ such that
$f(\alpha) = 0$ and such that $a \equiv \alpha
\;\text{mod}\;VW^{\dagger}(A)$. 
\end{prop}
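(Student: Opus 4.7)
\emph{Plan.} Uniqueness is immediate: a solution $\alpha$ of $f(\alpha)=0$ with $\mathbf{w}_0(\alpha)=a$ is already unique in the larger ring $W(A)$, which is $V$-adically complete and separated, hence Henselian with respect to the ideal $VW(A)$. The task is therefore to construct such an $\alpha$ inside $W^{\dagger}(A)$.

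For existence I would run a Newton iteration inside $W^{\dagger}(A)$. Set $\alpha_0 := [a]$. Then $\mathbf{w}_0(\alpha_0)=a$ forces $f(\alpha_0) \in VW^{\dagger}(A)$, and $\mathbf{w}_0(f'(\alpha_0)) = f'(a) \in A^{\times}$ allows Proposition \ref{A18p} to invert $f'(\alpha_0)$ in $W^{\dagger}(A)$. Define recursively
\begin{displaymath}
  \alpha_{n+1} := \alpha_n - f(\alpha_n)/f'(\alpha_n).
\end{displaymath}
An induction, invoking Proposition \ref{A18p} at each step, keeps $\alpha_n \in W^{\dagger}(A)$ with $\mathbf{w}_0(\alpha_n)=a$; Taylor expansion yields $f(\alpha_{n+1}) = \sum_{k \geq 2} (f^{(k)}(\alpha_n)/k!)\,(\alpha_{n+1}-\alpha_n)^k$, and the standard quadratic estimate gives $\alpha_{n+1}-\alpha_n \in V^{2^n}W^{\dagger}(A)$. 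Hence $(\alpha_n)$ converges $V$-adically in $W(A)$ to the desired $\alpha$ with $f(\alpha)=0$ and $\mathbf{w}_0(\alpha)=a$.

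The substantive point is to show $\alpha \in W^{\dagger}(A)$, i.e., that the telescoping sum $\alpha_0 + \sum_n(\alpha_{n+1}-\alpha_n)$ converges with respect to a single Gauss norm $\gamma_{\varepsilon}$. I would fix $\varepsilon > 0$ small enough (using Corollary \ref{A10c}) that $\gamma_{\varepsilon}(f(\alpha_0))$ is large positive and that $\gamma_{\varepsilon}(f'(\alpha_0)^{-1})$ is controlled; for the latter, write $f'(\alpha_0) = [f'(a)]\bigl(1 - \beta\bigr)$ with $\beta \in VW^{\dagger}(A)$ of large Gauss norm after shrinking $\varepsilon$, and apply Lemma \ref{A18l}. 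Feeding these bounds into the Taylor identity produces a doubling recursion of the form $\gamma_{\varepsilon}(\alpha_{n+2}-\alpha_{n+1}) \geq 2\gamma_{\varepsilon}(\alpha_{n+1}-\alpha_n) - C$ with an absolute constant $C$, so $\gamma_{\varepsilon}(\alpha_{n+1}-\alpha_n) \to +\infty$ and the telescoping sum converges in $W^{\varepsilon}(A)$.

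The main obstacle is keeping all the Taylor estimates uniform in $n$: one must bound $\gamma_{\varepsilon}(f'(\alpha_n)^{-1})$ and $\gamma_{\varepsilon}(f^{(k)}(\alpha_n)/k!)$ below, independently of $n$. Since $\alpha_n - \alpha_0$ has large, growing Gauss norm, $f'(\alpha_n) = f'(\alpha_0)\bigl(1 - \beta_n\bigr)$ with $\gamma_{\varepsilon}(\beta_n) \to \infty$, and another application of Lemma \ref{A18l} absorbs the perturbation into the inverse; the same argument handles $f^{(k)}(\alpha_n)$. Following the authors' Monsky--Washnitzer hint, one may alternatively bypass the iteration: the formal implicit function theorem expresses $\alpha - [a]$ as a universal power series in $u := -f([a])/f'([a]) \in VW^{\dagger}(A)$ with $p$-adically bounded coefficients in $W(K)$, and the preceding proposition on weak completeness guarantees that this series converges to an element of $W^{\dagger}(A)$.
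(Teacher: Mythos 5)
Your main route — a Newton iteration $\alpha_{n+1} = \alpha_n - f(\alpha_n)/f'(\alpha_n)$ starting from $\alpha_0 = [a]$, controlled by Gauss norm estimates — is genuinely different from the paper's. The paper does not iterate at all: it first observes that $\ker\bigl(W^{\dagger}(A)/pW^{\dagger}(A) \to A\bigr)$ is a square-zero ideal, lifts $a$ across it to an $\bar\alpha \bmod p$ with $f(\bar\alpha)=0$, and then invokes the general Hensel lemma for weakly complete algebras which it deduces from the Monsky--Washnitzer notion of a very smooth morphism applied to the universal algebra $R[S_d,\dots,S_0]^{\dagger}[X,T]/(F(X), 1-TF'(X))$. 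That abstract route sidesteps all quantitative estimates; your route buys concreteness at the cost of having to prove uniform Gauss norm bounds.

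Those bounds are exactly where your sketch has gaps. The doubling recursion $\gamma_{\varepsilon}(\alpha_{n+2}-\alpha_{n+1}) \geq 2\gamma_{\varepsilon}(\alpha_{n+1}-\alpha_n) - C$ is asserted with an absolute constant $C$, but producing such a $C$ requires (i) a lower bound on $\gamma_{\varepsilon}(\alpha_n)$ uniform in $n$, (ii) a uniform lower bound on $\gamma_{\varepsilon}(f'(\alpha_n)^{-1})$, and (iii) an initial estimate $g_0 := \gamma_{\varepsilon}(\alpha_1-\alpha_0) > C$ to start the blow-up. Item (i) is fine ($\gamma_{\varepsilon}(\alpha_n) \geq \min\{\gamma_{\varepsilon}(\alpha_0), \min_j g_j\}$), but item (ii) needs care: you write that $\gamma_{\varepsilon}(\beta_n) \to \infty$, which is not true — since $\alpha_n - \alpha_0 = \sum_{j<n}(\alpha_{j+1}-\alpha_j)$ you only get $\gamma_{\varepsilon}(\alpha_n - \alpha_0) \geq g_0$, a fixed lower bound, not a divergent one. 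Lemma \ref{A18l} still applies if $g_0$ is large enough relative to the Taylor constants, but this sets up a bootstrap: $C$ depends on $\varepsilon$ and one must check, e.g.\ via Corollary \ref{A10c}, that shrinking $\varepsilon$ drives $C$ toward $0$ while $g_0$ stays close to $1$. That verification is the heart of this proof and is not carried out. Finally, your closing alternative misstates the paper's mechanism: the implicit function theorem expands $\alpha-[a]$ as a power series whose coefficients are built from the coefficients of $f$, hence lie in $W^{\dagger}(A)$, not in $W(K)$, so the weak completeness proposition (which only covers series with $W(K)$-coefficients in overconvergent generators) does not apply directly; this is precisely why the paper routes through the universal polynomial algebra $R[S_d,\dots,S_0]^{\dagger}$ and very smooth morphisms instead.
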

\begin{proof} The kernel of the natural morphism
$W^{\dagger}(A)/pW^{\dagger}(A) \rightarrow A$ is an ideal whose
square is zero. Therefore there is an $\bar{\alpha} \in
W^{\dagger}(A)/pW^{\dagger}(A)$ which reduces to $a$ and such that
$f(\bar{\alpha}) = 0$. The rest of the proof is a general fact about 
weakly complete algebras explained below. 
\end{proof}

For the explanation we follow the notations of \cite{MW}: 
Let $(R,I)$ be a complete noetherian ring. 
Let $A$ be a weakly complete finitely generated (w.c.f.g.) algebra
over $(R,I)$. We write $\bar{A} = A/IA$.
Let $A\to B$ be a morphism of w.c.f.g. algebras, such that $\bar
B=\bar A[X_1,\dots, X_n]/(\bar F^{(1)}\dots \bar F^{(s)})$, $s\le n$
and the $s\times s$ subdeterminants of $(\frac {\partial
  F^{(i)}}{\partial X_j})$ generate the unit ideal in $\bar B$.
Then by \cite{MW} p. 195 the morphism $A\to B$ is very smooth.
As an example we may take for $B$ the weak completion of
   $$A[X,T]/(f(X), 1-f'(X)T),$$
where $f(X)\in A[X]$ is a polynomial.

\begin{prop} Let $C$ be a weakly complete (not necessarily finitely
  generated but $p$-adically separated)   algebra over $(R,I)$. Let
  $f(X)\in C[X]$ be a polynomial and let 
  $\bar\gamma\in\bar C$ be an element, such that $f(\bar\gamma)=0$ and
  $f'(\bar\gamma)$ is a unit in $\bar C$. Then there is a unique
  element $\gamma\in C$, such that $f(\gamma)=0$ and
  $\gamma\equiv\bar\gamma \; \mod \, IC$.
\end{prop}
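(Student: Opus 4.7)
The strategy is to reduce Hensel's lemma to the very smoothness of the Monsky--Washnitzer weak completion of $C[X,T]/(f(X), 1-f'(X)T)$, invoked via the example displayed immediately before the proposition. Since the cited very smoothness property from \cite{MW} is formulated for morphisms of w.c.f.g.\ algebras whereas the target $C$ here is only assumed weakly complete and $p$-adically separated, I would first descend the construction to a finitely generated subalgebra and then transport the resulting lift to $C$.

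Concretely, I would let $A_0 \subset C$ be the $R$-subalgebra generated by the (finitely many) coefficients of $f$, and let $A$ be its weak completion; this is w.c.f.g.\ over $(R,I)$, with $f \in A[X]$, and the weak completeness of $C$ furnishes a canonical morphism $A \to C$ extending $A_0 \hookrightarrow C$. Let $B$ be the weak completion of $A[X,T]/(f(X), 1-f'(X)T)$; by the example above the proposition, $A \to B$ is very smooth. The hypotheses on $\bar\gamma$ produce an $\bar A$-algebra morphism $\bar B \to \bar C$ sending $X \mapsto \bar\gamma$ and $T \mapsto \bar f'(\bar\gamma)^{-1}$, which the very smoothness property lifts to a unique $A$-algebra morphism $\rho : B \to C$ reducing to it modulo $IC$. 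Setting $\gamma := \rho(X)$, the relation $f(X) = 0$ in $B$ forces $f(\gamma) = 0$ in $C$, and $\rho \equiv \bar\rho \pmod{IC}$ forces $\gamma \equiv \bar\gamma \pmod{IC}$, giving existence.

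For uniqueness, suppose $\gamma' \in C$ is another root with $\gamma' \equiv \bar\gamma \pmod{IC}$. Then $f'(\gamma')$ reduces to the unit $\bar f'(\bar\gamma)$ in $\bar C$, and since $C$ is weakly complete and $p$-adically separated over the complete noetherian base $(R,I)$ one checks that units of $\bar C$ lift to units of $C$ via the standard $I$-adic geometric series, so $f'(\gamma') \in C^\times$. The pair $(\gamma', f'(\gamma')^{-1})$ thus defines a second $A$-algebra morphism $\rho' : B \to C$ reducing to $\bar\rho$, and the uniqueness half of the very smoothness forces $\rho' = \rho$, whence $\gamma' = \gamma$.

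The main obstacle is justifying the use of the very smoothness lifting property for the target $C$, which is not w.c.f.g. In \cite{MW} the lift of a mod-$I$ morphism is produced by a Newton iteration whose successive approximants lie in the weak completion; I would verify that precisely the same iteration, carried out inside $C$, converges $I$-adically (using $p$-adic separation and $IC \supset pC$ after enlarging $I$ if necessary) to an element that already lies in $C$, so the existence and uniqueness of the lift $\rho : B \to C$ remain valid. Equivalently, one can factor the construction through the w.c.f.g.\ subalgebra of $C$ generated over $A$ by the successive Newton iterates, reducing to the case treated by \cite{MW} directly.
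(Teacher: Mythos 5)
Your overall strategy is the same as the paper's: reduce existence to the very smoothness of the weak completion $B$ of $A[X,T]/(f(X),\,1-f'(X)T)$ over some w.c.f.g.\ base $A$, apply the lifting property to the square with target $C$, and read off $\gamma$ as the image of $X$. The differences are in the auxiliary choices. Where you take $A$ to be the weak completion of the subalgebra $R[s_0,\dots,s_d]\subset C$ generated by the actual coefficients, the paper takes $A=R[S_0,\dots,S_d]^{\dagger}$, the weak completion of a \emph{free} polynomial ring, with a generic degree-$d$ polynomial $F(X)\in A[X]$ and the specialization $A\to C$, $S_i\mapsto s_i$. The paper's choice is slightly cleaner: the existence of the map $A\to C$ is immediate from the universal property of the weak completion of a free polynomial algebra (that is essentially what ``$C$ weakly complete'' means), whereas with your $A=A_0^{\dagger}$ one must argue that the inclusion $A_0\hookrightarrow C$ extends to $A_0^{\dagger}\to C$, which requires a small argument about ideals and $I$-adic separation. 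Your concern in the last paragraph about applying very smoothness with a non-w.c.f.g.\ target $C$ is legitimate and the paper silently uses the lifting property against $C$; the standard resolution is exactly the one you indicate.

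The one real weak point is your uniqueness argument. You invoke ``the uniqueness half of the very smoothness'' of $A\to B$, but very smoothness in Monsky--Washnitzer is an \emph{existence} statement for lifts (it is the $p$-adic analogue of formal smoothness, not formal \'etaleness), so no uniqueness of $\rho$ is available from it directly. One could salvage this by observing that for this particular $B$ the Jacobian is square and invertible, so the lift is in fact unique, but that requires a separate argument. The paper sidesteps all of this in one line: apply ordinary Hensel's lemma in the $I$-adic completion $\widehat{C}$ to get uniqueness of the root there, and use the hypothesis that $C$ is $p$-adically separated (so $C\hookrightarrow\widehat{C}$) to conclude uniqueness in $C$. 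That is both simpler and does not put any strain on the meaning of very smoothness; you should replace your uniqueness paragraph with it.
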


\begin{proof} By Hensel's Lemma applied to the completion of $C$ the
uniqueness of the solution is clear.

For the existence we write
$f(X)=s_dX^d+s_{d-1}X^{d-1}+\dots+s_1X+s_0$,
where $s_i\in C$.

Let $A=R[S_d,\dots, S_0]^{\dagger}$ be the weak completion of the
polynomial algebra. We set 
    $$F(X)=S_dX^d+\dots+S_1X+S_0\in A[X]$$ 
and we let $B$ be the weak completion of
   $$A[X,T]/(F(X), 1-TF'(X)).$$

Let $A\to C$ be the homomorphism defined by $S_i\mapsto s_i$. The
solution $\bar\gamma$ defines a homomorphism
   $$R/I[S_d,\dots, S_0, X, T]/(\bar F(X), 1-T\bar F'(X))\to \bar C$$
where $S_i\mapsto s_i\mod IC$ and $X\mapsto\bar\gamma, T\mapsto
f'(\bar\gamma)^{-1}$.

Hence we obtain a commutative diagramm
     \eqn\label{1}\xymatrix{ A\ar[d] \ar[r]&C\ar[d]\\
      B \ar[r]&\bar C.}\eeqn

Since $A\to B$ is very smooth by the example above, we find a morphism
$B'\to C$ making $(\ref{1})$ commutative. The image of $X$ is the
desired solution $\gamma \in C$. 
\end{proof}

We will now study the behaviour of overconvergent Witt vectors in 
finite \'etale extensions. Let $A$ be a finitely generated
$K$-algebra. Let $B$ a finite \'etale $A$-algebra which is  free as an
$A$-module. Let $e_i$, $1 \leq i \leq r$ be a basis of the $A$-module
$B$.  Then the natural map 
\begin{equation}\label{A22e}
W(A)^r \rightarrow W(B),
\end{equation}
which maps the standard basis of the free module $W(A)^r$ to the
Teichm\"uller representatives 
$[e_i]$ is an isomorphism. Moreover $W(B)$ is an \'etale algebra over $W(A)$. 

Indeed, by \cite{LZ} A8 the $W_n(A)$-algebra $W_n(B)$ is \'etale for
each $n$. We set 
$I_n = VW_{n-1}(A) \subset W_n(A)$. Then by loc.cit. we have
$I_nW_n(B) \subset VW_n(B)$. 
>From this we conclude by the lemma of Nakayama that:
\begin{displaymath}
W_n(A)^r \rightarrow W_n(B),
\end{displaymath}
is an isomorphism. Taking the projective limit we obtain
(\ref{A22e}). If we tensor (\ref{A22e}) 
with $A \otimes_{\mathbf{w}_0}$ we obtain that $A
\otimes_{\mathbf{w}_0} W(B) = B$.

\smallskip
 
We will now assume that $B$ is monic
\begin{displaymath}
    B = A[T]/f(T)A[T], 
\end{displaymath}
where
\begin{equation}
   f(T) = T^m - c_{m-1}T^{m-1} - \ldots - c_1T -c_0.
\end{equation}
Let $\nu$ be a negative pseudovaluation on $A$. We endow $B$ with the
equivalence class of admissible pseudovaluations defined by Proposition
\ref{A1p}. 
\begin{lemma}\label{A20l}
Let $d \in \mathbb{R}$, such that $d > \nu(c_i)$ for $i = 1, \ldots,
m$. An element $b \in B$ has a unique representation
\begin{displaymath}
   b = \sum_{i=0}^{m-1} a_iT^i.
\end{displaymath}
We set 
\begin{equation}\label{AEX1e}
   \tilde{\nu}(b) = \min_{i=1,\ldots, m-1} \{\nu(a_i) - id \}.
\end{equation}
Then $\tilde{\nu}$ is an admissible pseudovaluation on $B$.
\end{lemma}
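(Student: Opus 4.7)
The strategy is to realize $\tilde{\nu}$ as the quotient pseudovaluation of a suitable Example~1 valuation on $A[T]$, which will automatically make it admissible by Definition~\ref{A1d}. I equip $A[T]$ with the pseudovaluation $\nu_T$ of Example~1 normalized so that $\nu_T(T)=-d$, i.e., $\nu_T(\sum h_iT^i)=\min_i\{\nu(h_i)-id\}$. The condition on $d$ is precisely what makes the monic polynomial $f$ regular with respect to $T$ in the sense of Definition~\ref{LOK6d}; consequently Proposition~\ref{A9p} yields the exact additivity $\nu_T(qf)=\nu_T(q)+\nu_T(f)=\nu_T(q)-md$ for every $q\in A[T]$.

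Let $\pi:A[T]\to B$ be the natural surjection and $\bar{\nu}$ the quotient of $\nu_T$ on $B$. Since changing the parameter $d>0$ only alters $\nu_T$ within its linear equivalence class, Definition~\ref{A1d} identifies $\bar{\nu}$ as an admissible pseudovaluation on $B$. It therefore suffices to show $\tilde{\nu}=\bar{\nu}$. One direction is immediate: the reduced form $r=\sum_{i<m}a_iT^i$ is a preimage of $b$ with $\nu_T(r)=\tilde{\nu}(b)$, so $\bar{\nu}(b)\geq\tilde{\nu}(b)$.

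For the reverse inequality, take any $g\in A[T]$ with $\pi(g)=b$ and apply Euclidean division by $f$, obtaining $g=qf+r$ with the same unique remainder $r$. Write $v=\nu_T(r)$. The bound $\nu_T(g)\leq v$ is automatic whenever $\nu_T(qf)\neq v$, by the ultrametric triangle inequality. The main obstacle is the cancellation case $\nu_T(qf)=v$, in which $\nu_T(g)$ could a priori exceed $v$. To handle it, let $k_0$ be the largest index attaining the minimum in $\nu_T(q)=\min_j\{\nu(q_j)-jd\}$; the proof of Proposition~\ref{A9p} explicitly shows, using the regularity of $f$, that the coefficient of $T^{m+k_0}$ in $qf$ has $\nu$-value \emph{exactly} $\nu(q_{k_0})=v+(m+k_0)d$. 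Since $\deg r<m\leq m+k_0$, this coefficient is preserved when $r$ is added, so the coefficient of $T^{m+k_0}$ in $g$ has the same $\nu$-value, forcing $\nu_T(g)\leq\nu(q_{k_0})-(m+k_0)d=v$. This yields $\bar{\nu}(b)\leq\tilde{\nu}(b)$ and completes the identification $\tilde{\nu}=\bar{\nu}$; hence $\tilde{\nu}$ is admissible.
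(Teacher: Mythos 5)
Your proof is correct but takes a genuinely different route from the paper's. Both you and the paper realize $\tilde{\nu}$ as the quotient of the Example~1 pseudovaluation $\nu_T$ on $A[T]$ with $\nu_T(T)=-d$, and both must show $\nu_T(g)\leq\tilde{\nu}(b)$ for every lift $g$ of $b$. The paper proceeds by induction on $\deg g$: when $\deg g\geq m$ it replaces the top-degree monomials via $T^m\mapsto\sum_l c_lT^l$, producing a lower-degree lift $g'$ with $\nu_T(g')\geq\nu_T(g)$, the decisive estimate being $\nu(c_l)+(m-l)d\geq 0$. You instead perform a single Euclidean division $g=qf+r$ and appeal to Proposition~\ref{A9p} for the exact multiplicativity $\nu_T(qf)=\nu_T(q)-md$, then handle the cancellation case $\nu_T(qf)=\nu_T(r)$ by noting that the coefficient of $T^{m+k_0}$ in $qf$, whose $\nu$-value is pinned down in the proof of Proposition~\ref{A9p}, is unaffected by adding $r$ since $\deg r<m$. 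Your version is shorter and more structural, isolating exact multiplicativity by the regular polynomial $f$ as the driving mechanism, but it relies on the inner workings of the proof of Proposition~\ref{A9p} rather than only its statement; the paper's degree-reduction is more elementary and self-contained. Incidentally, the stated hypothesis $d>\nu(c_i)$ must be a misprint for $d\geq -\nu(c_i)$: this is the inequality both arguments actually use, and it is what makes $f$ regular.
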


\begin{proof} 
We consider on $A[T]$ the pseudovaluation $\mu$
(\ref{A2e}). We will show that with $d$ as above $\tilde{\nu}$ 
is the quotient of $\mu$.

Let 
\begin{displaymath}
   \tilde{b} = \sum_{j=0}^{s} u_j T^j,
\end{displaymath}
be an arbitrary representative of $b$. We need to show that
$\mu(\tilde{b})$ is smaller than the right hand side of (\ref{AEX1e}).
We prove this by induction on $s$. For $s <m$ there is nothing to
show. For $s \geq m$ we obtain another representative of $b$:
\begin{equation}
   \tilde{b}' = \sum_{j=0}^{m-1}u_j T^j + \sum_{k \geq m}
   u_k(\sum_{l=0}^{m-1} c_l T^l)T^{k-m}.
\end{equation}  
On the right hand side there is a polynomial of degree at most $s-1$.
Therefore it suffices by induction to show that 
\begin{displaymath}
   \mu(\tilde{b}') \geq \mu(\tilde{b}).
\end{displaymath}
The last inequality is a consequence of the following:
\begin{equation}
\begin{array}{lclr}
\mu(u_j T^j) & \geq & \mu(\tilde{b}), & \text{for} \; j = 0, \ldots,
m-1\\
\mu(u_k c_l T^{k-m+l}) & \geq & \mu(\tilde{b}) & \text{for} \; k \geq m, \;
0 \leq l \leq m-1. 
\end{array}
\end{equation}
The first set of these inequalities is trivial. For the second set we
compute
\begin{displaymath}
\begin{array}{lcl}
  \mu(u_k c_l T^{k-m+l}) & \geq & \nu(u_k) + \nu(c_l) - kd +(m-l)d\\
& \geq & \mu(u_k) - kd \geq \mu(\tilde{b}).
\end{array}
\end{displaymath} 
The last equation holds because by the choice of $d$:
\begin{displaymath}
   \nu(c_l) + (m-l)d \geq 0.
\end{displaymath}
This shows the second set of inequalities. 
\end{proof}

Because $\tilde{\nu}$ restricted to $A$ coincides with $\nu$ we
simplify the notation by setting $\tilde{\nu} = \nu$. The Gauss norms 
(\ref{A8e}) induced by the pseudovaluation $\nu$ on $W(B)$ and $W(A)$
will be also denoted by the same symbols $\gamma_{\varepsilon}$.

\begin{lemma}\label{A22l}
With the notations of Lemma \ref{A20l} we assume that $B$ is \`etale
over $A$. We will denote the residue class of $T$ in $B$ by $t$.

Then there is a constant $G \in \mathbb{R}$ with the
following property: Each $b\in B$ has for each integer $n \geq 0$ a
unique representation 
\begin{displaymath}
   b = \sum_{i=0}^{m-1} a_{ni}t^{ip^n}.
\end{displaymath}
Then we have the following estimates for the pseudovaluations of
$a_{ni}$: 
\begin{equation}
   \nu(a_{ni}) \geq \nu(b) - p^n G.
\end{equation}
\end{lemma}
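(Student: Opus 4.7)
First I would establish existence and uniqueness of the expansion $b = \sum_{i=0}^{m-1} a_{ni}\,t^{ip^n}$. Since $B/A$ is étale, the $n$-fold iterated relative Frobenius
\[
F^n_{B/A}\colon A[T]/(f^{(p^n)}(T)) \longrightarrow B, \qquad T \longmapsto t^{p^n},
\]
with $f^{(p^n)}(T) = T^m - c_{m-1}^{p^n}T^{m-1} - \cdots - c_0^{p^n}$ obtained from $f$ by raising coefficients to the $p^n$-th power, is an isomorphism of $A$-modules; hence the image of the standard basis $\{T^i\}_{i=0}^{m-1}$, namely $\{t^{ip^n}\}_{i=0}^{m-1}$, is an $A$-basis of $B$, and the representation is unique.

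Next I would let $P^{(n)}\in M_m(A)$ be the change-of-basis matrix defined by $t^{ip^n} = \sum_j P^{(n)}_{ij}\,t^j$, so that $P^{(n)}_{ij}$ is the coefficient of $t^j$ in the reduction of $t^{ip^n}$ modulo $f(t)$. The two expansions $b = \sum_i a_{ni}\,t^{ip^n} = \sum_j b_j\,t^j$ then give $a_{ni} = \sum_j b_j\,\bigl((P^{(n)})^{-1}\bigr)_{ji}$. From Lemma~\ref{A20l} (taking $d>0$) one has $\nu(b_j)\ge\nu(b)$ for every $j$, so
\[
\nu(a_{ni}) \;\ge\; \nu(b) \;+\; \min_{j}\nu\bigl((P^{(n)})^{-1}_{ji}\bigr).
\]
It therefore suffices to produce a constant $G\ge 0$, independent of $n$, such that $\nu\bigl((P^{(n)})^{-1}_{ji}\bigr)\ge -p^n G$.

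To control this inverse I would exploit that $P^{(n)}$ is the matrix of the $F_A^n$-semilinear endomorphism $F^n\colon B\to B$, $b\mapsto b^{p^n}$, in the basis $\{t^i\}$. Semilinearity combined with $F^{n+1}=F\circ F^n$ yields the recursion $P^{(n+1)} = F_A(P^{(n)})\cdot P^{(1)}$, where $F_A$ denotes entrywise $p$-th power on $A$, and iterating gives
\[
P^{(n)} \;=\; F_A^{n-1}(P^{(1)})\,F_A^{n-2}(P^{(1)})\cdots F_A(P^{(1)})\,P^{(1)}.
\]
The étaleness of $B/A$ is exactly the statement that $P^{(1)}$, being the matrix of the relative Frobenius $F_{B/A}$, is invertible over $A$; set $N=(P^{(1)})^{-1}\in M_m(A)$ and $G_0=\max_{i,j}\bigl(-\nu(N_{ij})\bigr)$. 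Inverting and reversing the order of the product,
\[
(P^{(n)})^{-1} \;=\; N\cdot F_A(N)\cdot F_A^2(N)\cdots F_A^{n-1}(N).
\]
The entries of $F_A^k(N)$ are $p^k$-th powers of entries of $N$, so they satisfy $\nu\bigl(F_A^k(N)_{ij}\bigr)\ge -p^k G_0$. Expanding the matrix product as a sum of length-$n$ products of scalars and invoking property~2) of a pseudovaluation,
\[
\nu\bigl((P^{(n)})^{-1}_{ji}\bigr) \;\ge\; -G_0\bigl(1+p+\cdots+p^{n-1}\bigr) \;\ge\; -\frac{G_0}{p-1}\,p^n,
\]
so $G=G_0/(p-1)$ suffices.

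The main technical hurdle is the matrix bookkeeping -- verifying the semilinear recursion and its inversion -- together with the fact that $N$ has entries genuinely in $A$; both rest on the étale hypothesis via the relative Frobenius being an $A$-module isomorphism.
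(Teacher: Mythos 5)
Your proof is correct and follows essentially the same route as the paper's. Both arguments rest on the same three ideas: étaleness makes $\{t^{ip^n}\}$ an $A$-basis for every $n$; the coefficient-vector transition between levels $n$ and $n+1$ is governed by the entrywise $p^n$-th power of a single fixed matrix; and summing the geometric series $1+p+\cdots+p^{n-1}$ yields the $p^n G$ bound with $G = C/(p-1)$. The only cosmetic difference is the direction of the change of basis: the paper writes $t^i = \sum_j u_{ji} t^{jp}$ directly (so its matrix $U$ is already what you call $N = (P^{(1)})^{-1}$, up to transpose) and runs the forward recursion $a(n+1) = U^{(p^n)}a(n)$ on the coefficient vectors, whereas you start from $P^{(1)}$, identify the semilinear Frobenius recursion $P^{(n+1)} = F_A(P^{(n)})\cdot P^{(1)}$, and then explicitly invert the product. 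Your presentation makes the semilinear structure more visible, but the bookkeeping and the resulting bound are identical; there is no gap.
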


\begin{proof} Since $B$ is \'etale over $A$ the elements
\begin{displaymath}
   1, t^{p^n}, t^{2p^n}, \ldots, t^{(m-1)p^n}
\end{displaymath}
are for each $n$ a basis of the $A$-module $B$. 
We write
\begin{equation}
   t^i = \sum_{j=0}^{m-1} u_{ji}t^{jp}.
\end{equation}
We introduce the matrix $U = (u_{ji})$ and we set for matrices
\begin{displaymath}
   \nu(U) = \min_{i,j} \{ \nu (u_{ji}) \}.
\end{displaymath}
We deduce the relation:
\begin{displaymath}
   a_{1j} = \sum_{i} u_{ji}a_{0i}.
\end{displaymath}
We will write the last equality in matrix notation:
\begin{displaymath}
   a(1) = Ua(0).
\end{displaymath}
Let $U^{(p^{n})}$ the matrix obtained form $U$ by raising all
entries of $U$ in the $p^{n}$-th power. Then we obtain with the
obvious notation:
\begin{displaymath}
   a(n+1) = U^{(p^{n})}a(n).
\end{displaymath} 
It is obvious that for two matrices $U_1, U_2$ with entries in $B$
\begin{displaymath}
   \nu(U_1U_2) \geq \nu(U_1) + \nu(U_2).
\end{displaymath}
We choose a constant $C$ such that 
\begin{displaymath}
   \nu (U) \geq -C.
\end{displaymath}
Therefore we obtain:
\begin{displaymath}
\begin{array}{lcl}
  \nu(a(n)) & = & \nu(U^{(p^{n-1})}\cdot \ldots \cdot U a(0))\\ 
            & \geq & -(p^{n-1}C + \ldots + pC + C) + \nu(a(0)).
\end{array}
\end{displaymath}
By Lemma \ref{A22e} we have 
\begin{displaymath}
   \nu(b) = \min \{\nu(a_{0i}) -id  \} \leq \nu(a(0)).
\end{displaymath}
Therefore we obtain:
\begin{displaymath}
   \nu (a(n)) \geq - p^n\frac{C}{p-1} + \nu(b).
\end{displaymath}
We therefore found the desired constant. 
\end{proof}

\begin{prop}
Let $B = A[t]$ be a finite \'etale $A$-algebra as in Lemma \ref{A22l}.
Let $G > 0$ be the constant of this Lemma. 
Let $x = [t] \in W(B)$ be the Teichm\"uller representative. By
(\ref{A22e}) $1, x, \ldots, x^{m-1}$ is a basis of the $W(A)$-module
$W(B)$. We write an element $\eta \in W(B)$ 
\begin{displaymath}
   \eta = \sum _{i=0}^{m-1} \xi_i x^i, \quad \xi_i \in W(A).
\end{displaymath}

There is a real number $\delta >0$, such that for $\varepsilon \leq
\delta$ an inequality
\begin{displaymath}
   \gamma_{\varepsilon}(\eta) \geq -C \quad \text{implies} \quad 
   \gamma_{\varepsilon}(\xi_i) \geq -C -\varepsilon G.
\end{displaymath}
\end{prop}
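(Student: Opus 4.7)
The plan is to proceed by induction on $n$ and show that, for $\varepsilon$ sufficiently small,
\begin{displaymath}
n + \varepsilon p^{-n}\nu(a_{i,n}) \;\geq\; \gamma_\varepsilon(\eta) - \varepsilon G
\end{displaymath}
holds for every $i$ and every $n \geq 0$; taking the infimum over $n$ then yields the asserted implication. Write $\xi_i = (a_{i,0}, a_{i,1}, \ldots)$, $\eta = (b_0, b_1, \ldots)$, and abbreviate $\Gamma = \gamma_\varepsilon(\eta)$.

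The algebraic input I would set up first is an explicit expansion of $b_n$. Since $[t]$ is a Teichm\"uller representative, $\xi_i \cdot [t]^i$ has $n$-th Witt component equal to $a_{i,n}\, t^{i p^n}$. The Witt addition polynomial $S_n$ is isobaric of weight $p^n$ when the $k$-th Witt coordinate is given weight $p^k$ (a consequence of the homogeneity of the ghost polynomial $w_n$), so that the Witt sum $\eta = \sum_i \xi_i [t]^i$ expands as
\begin{displaymath}
b_n \;=\; \sum_{i=0}^{m-1} a_{i,n}\, t^{i p^n} \;+\; Q_n\bigl(\{a_{j,k}\, t^{j p^k}\}_{k<n,\, 0 \leq j \leq m-1}\bigr),
\end{displaymath}
where $Q_n$ is a polynomial with integer coefficients each of whose monomials $\prod_l (a_{j_l,k_l} t^{j_l p^{k_l}})^{e_l}$ satisfies the strict bound $k_l < n$ together with the weight constraint $\sum_l e_l p^{k_l} = p^n$.

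The base case $n = 0$ is Lemma \ref{A22l} applied to $b_0 = \sum_i a_{i,0} t^i$. For the inductive step the hypothesis reads $\nu(a_{j,k}) \geq p^k\Gamma/\varepsilon - p^k G - k p^k/\varepsilon$ for all $k < n$. Substituting into each monomial of $Q_n$ (the integer coefficients contribute nonnegatively since $A$ has characteristic $p$) and using $\sum_l e_l p^{k_l} = p^n$, $k_l \leq n-1$, and $j_l \leq m-1$, one gets
\begin{displaymath}
\nu(Q_n) \;\geq\; p^n \Gamma/\varepsilon \;-\; (n-1)\, p^n/\varepsilon \;-\; p^n\bigl(G + (m-1)d\bigr).
\end{displaymath}
Comparing with the trivial bound $\nu(b_n) \geq p^n\Gamma/\varepsilon - n p^n/\varepsilon$, one sees that $\nu(Q_n) \geq \nu(b_n)$ as soon as $\varepsilon \leq \delta := 1/(G + (m-1)d)$, hence $\nu(b_n - Q_n) \geq p^n\Gamma/\varepsilon - n p^n/\varepsilon$. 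A final application of Lemma \ref{A22l} to the identity $b_n - Q_n = \sum_i a_{i,n} t^{i p^n}$ gives $\nu(a_{i,n}) \geq \nu(b_n - Q_n) - p^n G$, which rearranges to the required $n + \varepsilon p^{-n}\nu(a_{i,n}) \geq \Gamma - \varepsilon G$.

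The main obstacle will be the bookkeeping in this weighted-degree estimate: the factor $p^n/\varepsilon$ saved through the strict inequality $k_l < n$ in the grading of $Q_n$ must simultaneously absorb the loss $p^n G$ coming from Lemma \ref{A22l} and the loss $(m-1)d\,p^n$ coming from the Teichm\"uller factors $t^{j_l p^{k_l}}$. This forces $\varepsilon$ below a threshold depending only on $G$, $m$ and $d$ (and in particular not on $n$ or on $C$), and is precisely the reason for the qualifier "for $\varepsilon \leq \delta$" in the statement.
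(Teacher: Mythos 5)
Your proof is correct but organized differently from the paper's. The paper peels off one Teichm\"uller layer of $\eta$ at a time, tracking an auxiliary remainder Witt vector $\theta(n)=\sum_{i} {}^{V^{n}}\zeta_i(n)\, x^i$ whose Gauss norm is shown inductively to stay $\geq -C$; the homogeneity input it uses concerns only the Witt components of the sum of Teichm\"uller representatives $\sum_i[a_{n,i}t^{ip^n}]$. You instead argue directly on the $n$-th Witt coordinate $b_n$ of $\eta$, split it as $b_n=\sum_i a_{i,n}t^{ip^n}+Q_n$ by invoking the full isobaric structure of the $m$-fold Witt addition polynomial, bound $\nu(Q_n)$ from the induction hypothesis together with the weight constraint $\sum_l e_l p^{k_l}=p^n$ with $k_l<n$, and then apply Lemma~\ref{A22l} to $b_n-Q_n$. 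Both arguments land on the same threshold $\delta=1/(G+(m-1)d)$ — the paper's constant $G'$ is exactly your $(m-1)d$, by Lemma~\ref{A20l} — and the underlying mechanism (lose $p^nG$ via Lemma~\ref{A22l} at each layer, recover one full unit of slope from the strict inequality $k_l<n$ in the weighting) is the same. Your version dispenses with the auxiliary vectors $\zeta_i(n)$ and $\theta(n)$ and is arguably a bit cleaner, at the cost of invoking a slightly more global isobaricity fact. One small slip worth repairing: you assert ``$\nu(Q_n)\geq\nu(b_n)$,'' which your estimates do not give and which is not needed. What you actually establish is that, once $\varepsilon\leq\delta$, both $\nu(Q_n)$ and $\nu(b_n)$ are bounded below by $p^n(\Gamma-n)/\varepsilon$, whence $\nu(b_n-Q_n)\geq p^n(\Gamma-n)/\varepsilon$; state it this way rather than comparing $\nu(Q_n)$ with the a priori unknown quantity $\nu(b_n)$.
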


\begin{proof}
We choose a constant $G' >0$ such that 
\begin{displaymath}
   \nu (t^i) \geq -G', \quad \text{for} \; i = 0, \ldots, m-1.
\end{displaymath} 
We choose $\delta$ such that $\delta(G + G')  \leq 1$. We write
\begin{displaymath}
   \xi_i = \sum_{s \geq 0} ~^{V^s}[a_{s,i}] \quad \text{with} \;
   a_{s,i} \in A.
\end{displaymath}
We define
\begin{displaymath}
   \zeta_i(n) = \sum_{s \geq n} ~^{V^{s-n}} [a_{s,i}].
\end{displaymath}
We will show by induction on $n$ the following two assertions:
\begin{equation}\label{A24e}
   \gamma_{\varepsilon}(\sum_{i=0}^{m-1} ~^{V^{n}}\zeta_i(n) x^i) \geq -C. 
\end{equation}

\begin{equation}\label{A26e}
   \gamma_{\varepsilon} (~^{V^{n}}[a_{n,i}]) \geq -C - \varepsilon G.
\end{equation}
We begin to show that the first inequality for a given $n$ implies
the second. We set:
\begin{displaymath}
   \theta(n) = \sum_{i=0}^{m-1} ~^{V^{n}}\zeta_i(n) x^i.
\end{displaymath}
The first non-zero component of this Witt vector is 
\begin{displaymath}
   y_n = \sum_{i=0}^{m-1} a_{n,i}t^{ip^n}.
\end{displaymath}
in place $n+1$. We conclude that
\begin{displaymath}
   n + \varepsilon p^{-n} \nu (y_n) \geq \gamma_{\epsilon}(\theta(n))
   \geq -C, 
\end{displaymath}
where the last inequality is (\ref{A24e}). This shows that:
\begin{displaymath}
   \nu (y) \geq - \varepsilon p^{n} (C+n).
\end{displaymath}
We conclude by Lemma \ref{A22l} that
\begin{equation}\label{A28e}
   \nu (a_{n,i}) \geq -  (p^{n}/\varepsilon) (C+n) - p^n G,
\end{equation}
and therefore 
\begin{displaymath}
   \gamma_{\varepsilon} (~^{V^{n}}[a_{n,i}]) = n + \varepsilon p^{-n} \nu
   (a_{n,i}) \geq - C - \varepsilon G.
\end{displaymath}
Therefore the Proposition follows if we show the assertion (\ref{A24e}) 
by induction. The assertion is trivial for $n=0$ and we assume it for
$n$. With the notation above we write:
\begin{displaymath}
\begin{array}{lcl}
   \theta(n+1) & = & \theta(n) - \sum_{i=0}^{m-1}
   ~^{V^{n}}[a_{n,i}]x^i\\
& = & (\theta(n) - ~^{V^{n}}[y_n]) - (\sum_{i=0}^{m-1}
~^{V^{n}}[a_{n,i}]x^i  - ~^{V^{n}}[y_n]). 
\end{array}
\end{displaymath}
The Witt vector in the first brackets has only entries which also 
appear in $\theta(n)$ and therefore has Gauss norm $\gamma_{\varepsilon}
\geq -C$. The assertion follows if we show the same inequality for the Witt
vector in the second brackets:
\begin{displaymath}
   ~^{V^{n+1}}\tau = (\sum_{i=0}^{m-1}~^{V^{n}}[a_{n,i}]x^i  - 
~^{V^{n}}[y_n]).
\end{displaymath}
We set 
\begin{displaymath}
  [y_n] = \sum_{i=0}^{m-1} [a_{n,i}t^{ip^n}] = (s_0,s_1,s_2, \ldots ).
\end{displaymath}
Then we find
\begin{displaymath}
   ~^V\tau = (0,s_1,s_2, \ldots ).
\end{displaymath}
We know that $s_l$ is a homogeneous polynomial of degree $p^l$ in 
the variables $a_{n,i}t^{ip^n}$ for $i= 1, \ldots m-1$. By the 
choice of $G'$ we find $\nu (t^{ip^n}) \geq p^n\nu(t^i) \geq - p^n
G'$. Using (\ref{A28e}) we find:
\begin{displaymath}
   \nu (s_l) \geq -p^l((p^n/\varepsilon)(C+n) + p^nG) - p^lp^nG' =
-p^{n+l}((1/\varepsilon)(C+n) + G + G').
\end{displaymath}  
We have 
\begin{displaymath}
   ~^{V^{n+1}}\tau = \sum_{l \geq 1} ~^{V^{n+l}}[s_l]. 
\end{displaymath}
For the Gauss norms of the entries of this vector we find for $l \geq 1$:
\begin{displaymath}
\begin{array}{lcl}
\gamma_{\varepsilon}( ~^{V^{n+l}}[s_l]) & = & n + l + \varepsilon
   p^{-n-l}\nu(s_l) \geq n + l - \varepsilon ((1/\varepsilon)(C+n) + G
   +G')\\
& = & l - C - \varepsilon (G + G') \geq -C.
\end{array}
\end{displaymath}
The last inequality follows since $l\geq 1$ by the choice of $\delta$.
We conclude that 
\begin{displaymath}
\gamma_{\varepsilon}(~^{V^{n+1}}\tau) \geq -C.
\end{displaymath}
\end{proof}

\begin{cor}\label{endl-et}
Let $A$ be a finitely generated algebra over  $K$. Let $B = A[T]/(f(T))$ be a finite
\'etale $A$-algebra, where $f(T) \in A[T]$ is a monic polynomial of
degree $n$. We
denote by $t$ the residue class of $T$ in $B$. We set
$x = [t] \in W^{\dagger} (B)$. 

Then $W^{\dagger} (B)$ is finite and \'etale over $W^{\dagger} (A)$
with basis $1,x \ldots, x^{n-1}$. 
\end{cor}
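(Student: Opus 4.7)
My plan is first to establish that $1, x, \ldots, x^{n-1}$ form a $W^{\dagger}(A)$-basis of $W^{\dagger}(B)$, and then to deduce étaleness by lifting the minimal relation of $x$ and invoking Proposition \ref{A18p}. I would begin by endowing $B$ with the admissible pseudovaluation of Lemma \ref{A20l}, whose restriction to $A$ is $\nu$; the Gauss norms $\gamma_{\varepsilon}$ on $W(B)$ then restrict to those on $W(A)$. Recall from (\ref{A22e}) that every $\eta \in W(B)$ has a unique expansion $\eta = \sum_{i=0}^{n-1} \xi_i x^i$ with $\xi_i \in W(A)$. The inclusion $\bigoplus_i W^{\dagger}(A)\, x^i \subseteq W^{\dagger}(B)$ is immediate, since $[t]$ has only one nonzero coordinate and therefore lies in $W^{\varepsilon}(B)$ for every $\varepsilon > 0$, while $W^{\dagger}(B)$ is a $W^{\dagger}(A)$-algebra via (\ref{A15e}). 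The reverse inclusion is exactly the content of the preceding Proposition: if $\gamma_{\varepsilon}(\eta) \geq -C$ for $\varepsilon$ small enough, then each $\gamma_{\varepsilon}(\xi_i) \geq -C - \varepsilon G$, so every $\xi_i$ lies in $W^{\dagger}(A)$.

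Once the basis statement is in hand, $W^{\dagger}(B)$ is free of rank $n$ over $W^{\dagger}(A)$, and the expansion $x^n = \sum_{i=0}^{n-1} c_i x^i$ with $c_i \in W^{\dagger}(A)$ defines a monic polynomial $g(T) = T^n - \sum_{i=0}^{n-1} c_i T^i \in W^{\dagger}(A)[T]$. The map $W^{\dagger}(A)[T]/(g(T)) \to W^{\dagger}(B)$ sending $T \mapsto x$ is then a $W^{\dagger}(A)$-linear isomorphism of free rank-$n$ modules. Applying $\mathbf{w}_0$ to the identity $g(x) = 0$ yields a monic degree-$n$ relation on $t$ with coefficients in $A$; uniqueness of such a relation in the basis $1, t, \ldots, t^{n-1}$ of $B$ forces it to coincide with $f(t) = 0$, so $\mathbf{w}_0(g'(x)) = f'(t)$, which is a unit in $B$ by étaleness. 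Proposition \ref{A18p}, applied to the ring $B$, then promotes $g'(x)$ to a unit in $W^{\dagger}(B)$, and the presentation $W^{\dagger}(B) = W^{\dagger}(A)[T]/(g(T))$ with monic $g$ and $g'(x)$ invertible makes $W^{\dagger}(B)$ finite étale over $W^{\dagger}(A)$.

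The only substantive step is controlling the Gauss norms of the coefficients $\xi_i$ in the expansion of $\eta$, which is exactly what the preceding Proposition supplies; the remainder of the argument is formal bookkeeping within the Witt vector formalism.
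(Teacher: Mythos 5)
Your proposal is correct and follows the same path the paper intends: the basis statement $\bigoplus_i W^{\dagger}(A)\,x^i = W^{\dagger}(B)$ is read off directly from the Proposition immediately preceding the Corollary, exactly as you do, with the easy inclusion coming from overconvergence of Teichm\"uller representatives and functoriality (\ref{A15e}). The paper gives no written proof for the Corollary; your supplement of the \'etaleness part---lifting the relation $x^n=\sum c_i x^i$ to a monic $g\in W^{\dagger}(A)[T]$, checking via $\mathbf{w}_0$ and uniqueness of expansion in the basis $1,t,\ldots,t^{n-1}$ that $g$ reduces to $f$, and then invoking Proposition \ref{A18p} to promote $f'(t)\in B^{\times}$ to $g'(x)\in W^{\dagger}(B)^{\times}$---is the natural filling-in of that gap and is sound.
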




\newpage


\begin{thebibliography}{XXX}
\bibitem{DLZ} Ch.Davis, A.Langer, Th.Zink, Overconvergent de Rham-Witt
  Cohomology.  Preprint  

\bibitem{J} de Jong, A. J.  "Homomorphisms of Barsotti-Tate groups and
crystals in positive characteristic".  Invent. Math.  134  (1998),
no. 2,  301 -333.  

\bibitem{K1} K. Kedlaya, More \'{E}tale Curves of Affine Spaces in Positive
Characteristic, J. Algebraic Geometry \textbf{14} (2005), 187--192.

\bibitem{K2} Kedlaya, Kiran S. Slope filtrations revisited.
  Doc. Math.10 (2005).

\bibitem{LZ} A. Langer, Th. Zink, De Rham-Witt Cohomology for a
Proper and Smooth Morphism, Journal Inst. Math. Jussieu
\textbf{3(2)} (2004), 231--314.

\bibitem{LZ2} A. Langer, Th. Zink, Gauss-Manin Connection via Witt
Differentials, Nagoya Math. J. \textbf{179} (2005), 1--16.

\bibitem{L} Lubkin, Saul Generalization of $p$-adic cohomology: bounded Witt
vectors. A canonical lifting of a variety in characteristic $p\not=0$
back to characteristic zero.  Compositio Math.  34  (1977), no. 3,
225--277. 

\bibitem{Me} D. Meredith, Weak Formal Schemes, Nagoya Math. J.
\textbf{45} (1971), 1--38.

\bibitem{MW} P. Monsky, G. Washnitzer, Formal Cohomology I, Ann.
Math. \textbf{88(2)} (1968), 181--217.

\bibitem{M} P. Monsky, Formal Cohomology II, Ann. Math. \textbf{88}
(1968), 218--238.
\end{thebibliography}
\end{document}